\newtheorem{theorem}{Theorem}[section]
\newtheorem{claim}[theorem]{Claim}
\newtheorem{lemma}[theorem]{Lemma}
\newtheorem{corollary}[theorem]{Corollary}
\newtheorem{example}{Example}[section]
\theoremstyle{definition}
\newcommand{\ex}{{\rm ex}}
\title{Subdivision-free graphs with the maximum spectral radius}
\author{
Wanting Sun \thanks{Data Science Institute, Shandong University, Jinan, 250100, China. Email: {\tt wtsun@sdu.edu.cn}. Supported by the Natural Science Foundation of Shandong Province (ZR2024QA023) and the China Postdoctoral Science Foundation (2023M742092).}
\and
Guanghui Wang \thanks{School of Mathematics, and State Key Laboratory of Cryptography and Digital Economy Security, Shandong University, Jinan, 250100, China. Email: {\tt ghwang@sdu.edu.cn}. Supported by the National Key Research and Development Program (2023YFA1009603) and the Natural Science Foundation of China (12231018).
}
\and 
Pingchuan Yang \thanks{School of Mathematics, Shandong University, Jinan, 250100, China. Email: {\tt yyppcchh@126.com}.}
}
\date{\today}
\begin{document}
\maketitle
\begin{abstract}
    Given a graph family $\mathbb{H}$, let ${\rm SPEX}(n,\mathbb{H}_{\rm sub})$ denote the set of $n$-vertex $\mathbb{H}$-subdivision-free graphs with the maximum spectral radius. In this paper, we investigate the problem of graph subdivision from a spectral extremal perspective, with a focus on the structural characterization of graphs in ${\rm SPEX}(n,\mathbb{H}_{\rm sub})$. For any graph $H \in \mathbb{H}$, let $\alpha(H)$ denote its independence number. Define $\gamma_\mathbb{H}:=\min_{H\in \mathbb{H}}\{|H| - \alpha(H) - 1\}$. 
    We prove that every  graph in ${\rm SPEX}(n,\mathbb{H}_{\rm sub})$ contains a spanning subgraph isomorphic to $K_{\gamma_\mathbb{H}}\vee (n-\gamma_\mathbb{H})K_1$, which is obtained by joining a $\gamma_\mathbb{H}$-clique with an independent set of $n-\gamma_\mathbb{H}$ vertices.
    This extends a recent result by Zhai, Fang, and Lin concerning spectral extremal problems for $\mathbb{H}$-minor-free graphs. 

    \vspace{2mm}
    \textbf{Keywords:} graph subdivision; spectral radius; generalized book;  stability method
\end{abstract}
\maketitle

\section{Introduction}

Many important problems in extremal graph theory can  be framed as subgraph containment problems, which ask for conditions on a graph $G$ that ensure it contains a copy of a general graph $H$. Given a graph~$H$ and~$n\in\mathbb{N}$, the {\textit{Tur\'an number}} of~$H$, $\ex(n,H)$, is the maximum number of edges an~$n$-vertex~graph can have without containing a copy of~$H$. 
Mantel's theorem states that $\ex(n,K_3)=\lfloor n^2/4\rfloor$; Tur\'{a}n  \cite{Tu} generalized it by determining the exact value of $\ex(n,K_r)$, where $K_r$ denotes a complete graph on $r$ vertices. Erd\H{o}s, Stone and Simonovits \cite{Erdos1966,1946ESS1} found a  connection between $\mathrm{ex}(n, H)$ and the chromatic number $\chi(H)$ of $H$ by proving that 
\begin{align*}
    \ex(n,H)=\left(1-\frac{1}{\chi(H)-1}\right)\frac{n^2}{2}+o(n^2).
\end{align*}
For more results, the reader is referred to the book of Bollob\'{a}s \cite{EGT}.

\subsection{Minor and subdivision} 
Given a graph $H$, a {\textit{subdivision}} of $H$, denoted by $TH$, is a graph obtained by replacing some edges of $H$ by internally vertex-disjoint paths.
Define $H$ to be a {\textit{minor}} of $G$ if $H$ can be obtained from $G$ by means of a sequence of vertex deletions, edge deletions, and edge contractions.
Given a family of graphs $\mathbb{H}$, a graph $G$ is called \textit{$\mathbb{H}$-subdivision-free} if it contains no subdivision of any graph $H\in \mathbb{H}$ as a subgraph, and \textit{$\mathbb{H}$-minor-free} if it does not contain any graph in $\mathbb{H}$ as a minor. We say $G$ is \textit{$\mathbb{H}$-subdivision-saturated} if $G$ is $\mathbb{H}$-subdivision-free, but the addition of any new edge in $G$ creates an $H$-subdivision for some $H\in \mathbb{H}$. %it contains no subgraph isomorphic to a subdivision of a graph in $\mathbb{H}$. %For a family of graphs $\mathbb{H}$,  %a graph $G$ is said to be $H$-minor-free if it does not contain $H$ as a minor and $G$ is said to be $H$-subdivision-free if it does not contain a subdivision of $H$ as a subgraph. % have a subgraph as a $TH$.

In 1930, Kuratowski~\cite{kuratowski1930probleme} proved that a graph is planar if and only if it contains no subdivision of $K_5$ or $K_{3,3}$.
Later in 1937, Wagner~\cite{Wagner1937}, in another way, proved that a graph is planar if and only if it contains no minor of $K_5$ or $K_{3,3}$.
As a generalization of planar graphs, it is natural to study problems on $K_s$-subdivision(minor)-free or $K_{s,t}$-subdivision(minor)-free graphs. 
The celebrated  Hadwiger conjecture~\cite{1943Hadwiger} states that %proposed the famous conjecture that 
every $K_r$-minor-free graph is $(r-1)$-colorable for every integer $r\geq1$. In the 1980s, Kostochka~\cite{kostochka1982minimum} and Thomason~\cite{thomason1984extremal} independently proved that for large $r$, every $K_r$-minor-free graph contains at most $\Theta(r\sqrt{\log rn})$ edges and hence it is $O(r\sqrt{\log r})$-colorable. Recently, Alon, Krivelevich and Sudakov \cite{Alon2023} provided a short and self-contained proof of the celebrated Kostochka-Thomason bound. 
Norin, Postle and Song \cite{Song2023} showed that every $K_r$ minor-free graph 
is $O(r(\log r)^\beta)$-colorable for every $\beta>\frac{1}{4}$. %the maximum number of edges in a $K_r$-minor-free graph $G$ is $\Theta(r\sqrt{\log rn})$ and hence $G$ is $O(r\sqrt{\log r})$-colorable. 

Observe that any $\mathbb{H}$-minor-free graph is necessarily $\mathbb{H}$-subdivision-free. This naturally leads to the fundamental question: Under what conditions must a graph contain an $\mathbb{H}$-subdivision? In 1961, Haj\'{o}s \cite{hajos1961uber} conjectured a strengthening of Hadwiger's conjecture that every graph $G$ with chromatic number $\chi(G)\geq t$ contains a $TK_t$. Dirac \cite{Dirac1952} showed that this conjecture is true for $t\leq 4$, but Catlin \cite{Catlin1979}  disproved the conjecture for all $t\geq 7$.
 
As a stronger and more fundamental question, conditions on average degree
 guaranteeing an $H$-subdivision have been extensively studied. Bollob\'{a}s and Thomason 
\cite{Bollobas} proved a nice structural result that highly connected graphs are highly linked. Their result,
together with Mader's result \cite{Mader1967} on subgraphs with high connectivity, we obtain the following. 
\begin{theorem}[\cite{Bollobas,Mader1967}]\label{lem2.1}
    Let $G$ be a graph of order $n$ without isolated vertices, and let $H$ be any graph. If $G$ is $H$-subdivision-free, then $e(G)\leq 100e(H)n$. % there exists a constant $C$ such that $e(G)\leq C\cdot e(H) \cdot n$. %, where $C_{H}$ is a constant with respect to $H$. 
\end{theorem}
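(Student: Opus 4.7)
The strategy is to combine two classical ingredients: Mader's theorem \cite{Mader1967} that a large average degree forces a highly connected subgraph, and the Bollob\'as--Thomason linkage theorem \cite{Bollobas} that highly connected graphs are highly linked. Set $k:=e(H)$. Since $G$ has no isolated vertices and a subdivision of $H$ preserves all vertices of $H$ (including any isolated ones), one may assume $H$ has no isolated vertices and hence $|V(H)|\le 2k$. I would argue by contradiction: suppose $e(G)>100kn$, so the average degree of $G$ exceeds $200k$.

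The first step applies Mader's theorem in the form ``every graph with average degree at least $4t$ contains a $(t+1)$-connected subgraph''. With $t=50k$ this produces a subgraph $G'\subseteq G$ with $\kappa(G')\ge 50k+1$, and in particular $\delta(G')\ge 50k+1$. The second step applies the Bollob\'as--Thomason linkage theorem: every $22k$-connected graph is $k$-linked, meaning that for any $2k$ distinct vertices $s_1,t_1,\ldots,s_k,t_k$ one can find pairwise vertex-disjoint paths $P_\ell$ joining $s_\ell$ to $t_\ell$. Since $50k+1>22k$, $G'$ is $k$-linked with room to spare.

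The third step embeds a subdivision of $H$ in $G'$. Label $V(H)=\{v_1,\ldots,v_h\}$ and $E(H)=\{e_1,\ldots,e_k\}$. Greedily choose distinct branch vertices $u_1,\ldots,u_h\in V(G')$ and, for each $i$, distinct stubs $N_i\subseteq N_{G'}(u_i)$ with $|N_i|=d_H(v_i)$, requiring all the $h+\sum_i d_H(v_i)=h+2k\le 4k$ chosen vertices to be pairwise distinct; at each greedy step the forbidden set has size less than $\delta(G')$, so the choice succeeds. For each edge $e_\ell=v_iv_j$ designate one stub $x_\ell\in N_i$ and one $y_\ell\in N_j$, yielding $2k$ pairwise distinct endpoints. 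Delete $u_1,\ldots,u_h$ from $G'$ to form $G''$; since $h\le 2k$, we have $\kappa(G'')\ge 48k+1$, so $G''$ is still $k$-linked. Apply $k$-linkage in $G''$ to the pairs $(x_\ell,y_\ell)$, then reattach the edges $u_ix_\ell$ and $u_jy_\ell$ for every $e_\ell=v_iv_j$: the resulting union is a subdivision of $H$ sitting inside $G$, contradicting the hypothesis.

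The main obstacle is the bookkeeping in the final step, because the linkage paths must avoid every branch vertex and every stub other than their own endpoints. Deleting the branch vertices before invoking linkage takes care of the first requirement, and the second is automatic under the standard definition of $k$-linked (pairwise vertex-disjoint paths), since the $x_\ell,y_\ell$ are precisely the designated endpoints and no stub can end up in the interior of another path. The constant $100$ is deliberately generous: substituting the actual extremal coefficients ($4t$ in Mader and $22k$ in Bollob\'as--Thomason, together with the loss of $2k$ to branch-vertex deletion) shows that any coefficient above $48$ already closes the argument, so no effort is needed to tighten it.
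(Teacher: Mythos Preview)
Your proposal is correct and follows exactly the route the paper indicates: the paper does not prove this theorem but merely records that it follows by combining Mader's result that large average degree forces a highly connected subgraph with the Bollob\'as--Thomason linkage theorem, and you have spelled out precisely this combination with the standard stub-and-linkage embedding. The bookkeeping you carry out (greedy choice of branch vertices and stubs, deleting branch vertices before invoking linkage, and the observation that vertex-disjointness of the linkage paths automatically keeps foreign stubs out of interiors) is the usual way to pass from $k$-linked to containing an $H$-subdivision, so there is nothing to add.
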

%Mader~\cite{Mader1967}  established that graphs with sufficiently large constant average degree must contain large clique subdivisions. 
 Liu and Montgomery~\cite{liu2017proof} proved that % showed that a sufficiently large constant average degree implies a large clique subdivision.
there exists a constant  $c=c(s,t)>0$ such that every $K_{s,t}$-free ($t\geq s\geq 2$) graph with average degree $d$ contains a $TK_{cd^{\frac{s}{2(s+1)}}}$.  A \textit{balanced subdivision}  of $H$ is obtained by replacing all edges of $H$ with  internally disjoint paths of the same length. Recently, Kim, Liu, Tang, Wang, Yang and Yang~\cite{yang-sub} proved that for any graph $H$, a
 linear-in-$e(H)$ bound on average degree guarantees a balanced $H$-subdivision. %{\color{red}Replace Theorem 1.1 with Mader's result?}%proved that there exists a constant $C$ such that for any graph $H$, every graph $G$ with average degree at least $Ce(H)$ contains a balanced $H$-subdivision. 
%\begin{theorem}[\cite{yang-sub}]\label{thm1.1}
 %   There exists a constant $C>0$ such that for any $H$ without isolated vertices, if $G$ is a graph with degree $d(G) \geq C \cdot e(H)$, then $G$ contains a balanced subdivision of $H$. %$TH^{\ell}$ for some $\ell \in \mathbb{N}$.
%\end{theorem}

\subsection{Spectral extremal problems}
The spectral extremal problem is another interesting direction of the Tur{\'a}n-type problems. The spectral Tur\'{a}n type problem (also known as Brualdi-Solheid-Tur\'{a}n type problem, see \cite{Nik2}) states that: For a graph family $\mathbb{H}$, what is the maximal spectral radius of an $\mathbb{H}$-free graph with order $n$? Over the past decade, much attention has been paid to the Brualdi-Solheid-Tur\'{a}n type problem. For more details, one may consult the references, such as for $\mathbb{H}= \{K_r\}$ \cite{Nik3,Wilf}, $\mathbb{H}= \{K_{s,t}\}$ \cite{Bab,Nik3,Nik4}, $\mathbb{H}= \{C_4\}$ \cite{Nik6,ZW},  $\mathbb{H}= \{C_6\}$ \cite{zhai1}, $\mathbb{H}= \{C_3,C_5,\ldots,C_{2k+1}\}$ \cite{Sun2022}, and surveys \cite{surveyli,Liuning}.

For a family of graphs $\mathbb{H}$, it is natural to consider what the maximum spectral radius is of an $\mathbb{H}$-minor ($\mathbb{H}$-subdivision) free graph with  order $n$?  Let ${\rm spex}(n,\mathbb{H}_{\rm minor})$ (resp. ${\rm spex}(n,\mathbb{H}_{\rm sub})$) be the maximum spectral radius over all $n$-vertex $\mathbb{H}$-minor (resp. $\mathbb{H}$-subdivision) free graphs, and let ${\rm SPEX}(n,\mathbb{H}_{\rm minor})$ (resp. ${\rm SPEX}(n,\mathbb{H}_{\rm sub})$) be the family of extremal graphs achieving this maximum.  
This problem has stimulated significant research interest for numerous well-studied graph families %For many specific graphs $H$, this problem has gained great popularity and has attracted the attention of many researchers 
(see for example, \cite{Hong,Nikiforov-minor,Tait2019,Tait2017,Zhai-minor}). Zhai, Fang, and Lin~\cite{zhai} established a unified solution to this problem for $\mathbb{H}$-minor-free graphs, which can be formally stated as follows. % Recently, Zhai, Fang and Lin~\cite{zhai} gave a unified answer on this problem for $\mathbb{H}$-minor-free graphs, which can be stated as follows. 

A \textit{generalized book}, denoted by~$B_{s,t}$, is obtained by joining a $s$-clique with an independent set of $t$ vertices, that is, $B_{s,t}:=K_s\vee (tK_1)$. Let  $\alpha_H$ be the independence number of $H$. Define $\gamma_H:=|H|-\alpha_H-1$ and $\gamma_\mathbb{H}:=\min_{H\in\mathbb{H}}\gamma_H$.
\begin{theorem}[\cite{zhai}]\label{thm1.2}
    Let $\mathbb{H}$ be a family of graphs with $\gamma_{\mathbb{H}}\geq 1$ and $n$ be a sufficiently large integer. Then  every graph in ${\rm SPEX}(n,\mathbb{H}_{\rm minor})$ contains a spanning subgraph $B_{\gamma_\mathbb{H},n-\gamma_\mathbb{H}}$.
\end{theorem}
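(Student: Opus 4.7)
The plan is to follow the standard three-step spectral extremal template: a construction-based lower bound on $\rho(G^*)$, a Perron-vector/edge-count stability argument identifying a small ``heavy set'' of vertices, and an edge-swap argument upgrading this heavy set to the required spanning book $B_{\gamma_\mathbb{H}, n-\gamma_\mathbb{H}}$. Let $G^* \in {\rm SPEX}(n, \mathbb{H}_{\rm minor})$. First I would show that $B_{\gamma_\mathbb{H}, n-\gamma_\mathbb{H}} = K_{\gamma_\mathbb{H}} \vee (n - \gamma_\mathbb{H})K_1$ is itself $\mathbb{H}$-minor-free. Given any $H$-model with branch sets $V_1, \ldots, V_{|H|}$ inside it, any $V_i$ contained in the independent part $I$ must be a single vertex (being connected inside an independent set); hence the indices of such branch sets form an independent set in $H$, of size at most $\alpha(H)$. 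The remaining branch sets each meet $K_{\gamma_\mathbb{H}}$ and are pairwise disjoint, so there are at most $\gamma_\mathbb{H}$ of them, giving $|H| \leq \alpha(H) + \gamma_\mathbb{H}$, i.e.\ $\gamma_H \leq \gamma_\mathbb{H} - 1$, contradicting $\gamma_H \geq \gamma_\mathbb{H}$. A block-matrix computation on $B_{\gamma_\mathbb{H}, n-\gamma_\mathbb{H}}$ now yields $\rho(G^*) \geq \rho(B_{\gamma_\mathbb{H}, n-\gamma_\mathbb{H}}) \geq \sqrt{\gamma_\mathbb{H}(n-\gamma_\mathbb{H})} = \Theta(\sqrt{n})$.

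The second step combines the Kostochka--Thomason bound~\cite{kostochka1982minimum,thomason1984extremal} --- which, applied to some $H_0 \in \mathbb{H}$, gives $e(G^*) \leq Cn$ for a constant $C = C(\mathbb{H})$ and large $n$ --- with Perron-vector analysis. Normalize the Perron eigenvector $x$ of $G^*$ so $x_{u^*} = \max_v x_v = 1$, and use the identities $\rho x_{u^*} = \sum_{v \sim u^*} x_v$ and $\rho^2 x_{u^*} = \sum_{v \sim u^*} \sum_{w \sim v} x_w$. Splitting each sum at a suitable threshold $\eta = \eta(\mathbb{H}, n)$, the $\Theta(\sqrt{n})$ lower bound on $\rho$ and the $O(n)$ edge bound together identify a ``heavy set'' $L := \{v : x_v \geq \eta\}$ with $|L| = \gamma_\mathbb{H}$: the lower bound on $\rho$ forces $|L| \geq \gamma_\mathbb{H}$, while a quantitative refinement combining the edge bound with the minor-embedding argument of the first paragraph (applied now inside $G^*$, where $\gamma_\mathbb{H} + 1$ heavy vertices would together with $\alpha(H_0)$ low-weight vertices realize a forbidden $H_0$-minor) rules out $|L| > \gamma_\mathbb{H}$.

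The final and most delicate step is to promote $L$ to a clique with every vertex of $L$ adjacent to every vertex of $V(G^*) \setminus L$. Given $u \in L$, $v \in V(G^*) \setminus L$ with $uv \notin E(G^*)$, choose $w \neq u$ satisfying $wv \in E(G^*)$ and $x_w \leq x_u$, and set $G' := G^* - wv + uv$; the standard Rayleigh estimate $\rho(G') - \rho(G^*) \geq 2x_v(x_u - x_w)/\|x\|_2^2$ is positive (strict unless $x_u = x_w$) and would contradict the extremality of $G^*$ \emph{provided} $G'$ is still $\mathbb{H}$-minor-free. This verification is the main obstacle and contains essentially all the combinatorial content: if $G'$ admitted an $H$-model, that model would necessarily use the new edge $uv$, and I would argue, using the near-universal connectivity of the other vertices of $L$ in $G^*$ (from the second step), that the branch set containing $u$ can be rerouted through a second vertex $u' \in L \setminus \{u\}$, producing an $H$-model already present in $G^*$ --- a contradiction. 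Iterating this swap over all missing edges (both from $L$ to $V \setminus L$ and inside $L$) delivers the required spanning $B_{\gamma_\mathbb{H}, n-\gamma_\mathbb{H}}$.
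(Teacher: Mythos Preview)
This theorem is not proved in the present paper; it is quoted from \cite{zhai}. The paper's own contribution is the subdivision analogue, Theorem~\ref{thm1.3}, whose proof (Sections~\ref{sec3}--\ref{sec4}) follows a template that can be compared to your proposal.

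Your first two steps match the paper's framework closely: the construction-based lower bound is Lemma~\ref{lem2.3}, and the Perron-vector stability argument identifying a heavy set $L$ of size exactly $\gamma_\mathbb{H}$ is Lemma~\ref{lem2.4} (proved in Section~\ref{sec4} via the sequence of Claims~\ref{lem4.2}--\ref{lem4.6}). These steps are standard and your outline is adequate.

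The genuine gap is in your third step. Your single-edge swap $G' = G^* - wv + uv$ followed by a rerouting argument to certify that $G'$ remains $\mathbb{H}$-minor-free does not work as stated. The claim that ``the branch set containing $u$ can be rerouted through a second vertex $u' \in L \setminus \{u\}$'' is unjustified: all $\gamma_\mathbb{H}$ vertices of $L$ may already lie in distinct branch sets of the putative $H$-model (indeed this is the expected configuration), so there is no spare $u'$ to absorb; and even if some $u'$ were free, you have not explained why inserting it recovers both the internal connectivity of the branch set containing $u$ and the inter-branch edge that $uv$ supplied. Nor is it clear that a suitable $w$ with $x_w < x_u$ always exists (if $N(v) \subseteq L$ the inequality may fail to be strict).

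The paper's approach to the analogous step in Theorem~\ref{thm1.3} is structurally different and avoids this trap. It never tries to show the modified graph is subdivision-free. Instead it performs a larger, carefully designed transformation (Lemmas~\ref{lem3.3}--\ref{lem3.6}: delete all $S'$--$S$ edges, add all $S'$--$L$ edges, strip the neighbourhood of a long path $P^0$ in $S''$ down to a linear path), obtains $\rho(G_2) > \rho^*$, and then \emph{uses} the forced $H$-subdivision in $G_2$: because $P^0$ is linear, each subdivided path of a minimal $H$-subdivision meets $P^0$ in at most two vertices, contradicting $|P^0| \geq 2|H|^2$. For the clique step on $L$, the paper similarly adds the missing edge $v_1v_2$ while emptying a set $T \subseteq S$ of size $2|H|^2$, guaranteeing an unused vertex $w \in T$ through which $v_1v_2$ can be replaced by $v_1wv_2$ inside $G^*$. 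The point is that the transformation is engineered so that the forbidden structure it creates can be pushed back into $G^*$; your one-edge swap provides no such guarantee.
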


Recently, Byrne, Desai, and Tait \cite{byrne2024general} proved a general spectral extremal result that characterizes the maximum spectral radius problem for an extensive class of forbidden graph families. %Byrne, Desai and Tait \cite{byrne2024general} proved a general theorem which characterizes the spectral extremal graphs for a wide range of forbidden families $F$. 
As an application of their result, they showed that for a graph $H$, if $B_{k,n-k}$ is $H$-subdivision-saturated,  then ${\rm SPEX}(n, H_{\rm sub}) = B_{k,n-k}$.
%As a corollary, they showed that if $B_{k,n-k}$ is $F_{\rm sub}$-{\color{blue}saturated} then ${\rm SPEX}(n, F_{\rm sub}) = B_{k,n-k}$.
In this paper, we extend this characterization and develop a spectral framework for subdivision problems analogous to Theorem~\ref{thm1.2} in the minor-free setting. %we extend this result and study subdivision problems from the spectral extremal perspective. 

\begin{theorem}\label{thm1.3}
    Let $\mathbb{H}$ be a family of graphs with $\gamma_{\mathbb{H}}\geq 1$ and $n$ be a sufficiently large integer. Then  every graph in ${\rm SPEX}(n,\mathbb{H}_{\rm sub})$ contains a spanning subgraph isomorphic to $B_{\gamma_\mathbb{H},n-\gamma_\mathbb{H}}$.
\end{theorem}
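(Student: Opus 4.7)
The plan is to follow a three-phase spectral stability argument that parallels the minor-free treatment of Zhai, Fang, and Lin~\cite{zhai}, substituting the Kostochka-Thomason minor bound by the Bollob\'as-Thomason-Mader subdivision bound (Theorem~\ref{lem2.1}) wherever the minor-specific tools appear. Throughout, let $G^*\in{\rm SPEX}(n,\mathbb{H}_{\rm sub})$, let $\rho:=\rho(G^*)$, and let $x$ be the Perron eigenvector normalized by $x_{u_1}=\|x\|_\infty=1$.

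\emph{Phase 1 (lower bound).} I first verify that $B_{\gamma_\mathbb{H},n-\gamma_\mathbb{H}}$ is itself $\mathbb{H}$-subdivision-free; since every subdivision of a graph is also a minor, it suffices to check $\mathbb{H}$-minor-freeness. Given any $H\in\mathbb{H}$, an attempted $H$-minor embedding into $B_{\gamma_\mathbb{H},n-\gamma_\mathbb{H}}$ can use at most $\gamma_\mathbb{H}$ branch sets meeting the clique part, so at least $|H|-\gamma_\mathbb{H}\ge \alpha(H)+1$ branch sets are forced to be singletons in the independent part, producing an independent set in $H$ of size exceeding $\alpha(H)$, a contradiction. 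Hence $\rho\ge \rho(B_{\gamma_\mathbb{H},n-\gamma_\mathbb{H}})=(1+o(1))\sqrt{\gamma_\mathbb{H}\, n}$.

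\emph{Phase 2 (sparseness and concentration).} Theorem~\ref{lem2.1} applied to any fixed $H\in\mathbb{H}$ gives $e(G^*)\le C(\mathbb{H})\,n$. Fix a small $\eta=\eta(\mathbb{H})$ and set $L:=\{v:x_v\ge \eta\}$. Exploiting the identities $\rho = \sum_{v\sim u_1}x_v$ and $\rho^2=\sum_{v\sim u_1}\sum_{w\sim v}x_w$ together with the linear edge bound, I would show (i) $|L|\le C'(\mathbb{H})$, (ii) every $v\in L$ satisfies $d(v)=n-o(n)$, and (iii) $G^*-L$ has only $O(1)$ edges; a local edge-switching refinement then promotes (ii) to $d(v)=n-O(1)$ for each $v\in L$.

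\emph{Phase 3 (exact structure).} Comparing $\rho$ with the spectral radius of the approximate bipartite structure $(L,V\setminus L)$ of $G^*$ and using the monotonicity of $\rho(B_{k,n-k})$ in $k$ yields the matching lower bound $|L|\ge \gamma_\mathbb{H}$; otherwise $\rho< \rho(B_{\gamma_\mathbb{H},n-\gamma_\mathbb{H}})$, contradicting Phase 1. For the upper bound $|L|\le \gamma_\mathbb{H}$, I argue by contradiction: fixing $H^*\in\mathbb{H}$ with $\gamma_{H^*}=\gamma_\mathbb{H}$, I use $\gamma_\mathbb{H}+1$ apex vertices of $L$ and $\alpha(H^*)$ additional vertices from $V\setminus L$ as branch vertices, and route the edges of $H^*$ as internally disjoint paths through the dense bipartite structure between $L$ and $V\setminus L$, producing a copy of $TH^*$ in $G^*$, a contradiction. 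A standard edge-swap extremality argument finally forces $L$ to be a clique completely joined to $V\setminus L$, yielding $B_{\gamma_\mathbb{H},n-\gamma_\mathbb{H}}$ as a spanning subgraph.

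The main obstacle is the upper bound $|L|\le \gamma_\mathbb{H}$ in Phase 3. In the minor case of~\cite{zhai}, contracting an arbitrary connected subgraph suffices to produce a new branch set, so a single extra high-degree vertex directly yields a forbidden minor. For subdivisions no such contraction is available: each edge of $H^*$ must be realized by a genuine internally disjoint path, and the argument must carefully exploit the density of the $L$-to-$(V\setminus L)$ bipartite subgraph and the extreme sparseness of $G^*-L$ (via Theorem~\ref{lem2.1}) to execute a Menger-type rerouting that produces the required path system. Making this routing work uniformly for all $H^*\in\mathbb{H}$ achieving the minimum $\gamma_\mathbb{H}$, while retaining the sharp constant $\gamma_\mathbb{H}$, is the technical heart of the proof.
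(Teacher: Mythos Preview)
Your three-phase outline tracks the paper's approach at a coarse level, and the stability machinery you describe in Phases~1--2 is essentially what the paper's Lemma~\ref{lem2.4} does (via the Bollob\'as--Thomason--Mader bound and eigenweight thresholds). However, two of your claims are off, and you have misidentified where the real work lies.

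First, the assertion in Phase~2(iii) that $G^*-L$ has $O(1)$ edges is false in general and not needed. The paper only bounds $d_{S''}(v)\le \alpha_\mathbb{H}$ for $v\in S''$ (Lemma~\ref{lem3.1}), so $G^*-L$ can have linearly many edges; indeed the theorem only asserts $B_{\gamma_\mathbb{H},n-\gamma_\mathbb{H}}$ is a \emph{spanning subgraph} of $G^*$, not that $G^*=B_{\gamma_\mathbb{H},n-\gamma_\mathbb{H}}$.

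Second, and more seriously, you flag $|L|\le\gamma_\mathbb{H}$ as the main obstacle, but in the paper this is short: once each vertex of $L$ has degree $(1-o(1))n$, any $\gamma_\mathbb{H}+1$ of them share at least $\alpha_{H^*}+\binom{\gamma_\mathbb{H}+1}{2}$ common neighbours, giving a $K_{\gamma_\mathbb{H}+1,\,\alpha_{H^*}+\binom{\gamma_\mathbb{H}+1}{2}}$ and hence a $TH^*$ by Lemma~\ref{lem2.2}. The genuine difficulty is the step you dismiss as ``a standard edge-swap extremality argument'': forcing \emph{every} vertex of $V\setminus L$ to be adjacent to \emph{all} of $L$. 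The obvious swap (add a missing $L$--$S$ edge, delete some $S$--$S$ edges) does raise $\rho$, but the resulting graph may now contain a $TH$; you cannot simply assert it remains $\mathbb{H}$-subdivision-free. The paper handles this with a non-standard argument (Lemmas~\ref{lem3.3}--\ref{lem3.6}): it first shows $G^*[S'']$ contains a long path, then performs two graph transformations producing a graph $G_2$ with $\rho(G_2)>\rho^*$ in which this path becomes a \emph{linear path} of length exceeding $\max_{H\in\mathbb{H}}2|H|^2$. Any minimal $H$-subdivision in $G_2$ must absorb all of $S''$, yet each subdivided edge can meet the linear path in at most two vertices, yielding a contradiction. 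A similar linear-path counting argument then finishes the clique structure on $L$. Your proposal does not supply any mechanism of this kind, and without it Phase~3 does not close.
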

Theorem~\ref{thm1.3} directly implies the following corollary, which  simultaneously extending the applicability of a result by Byrne, Desai, and Tait \cite{byrne2024general}. %: The following is an obvious consequence of Theorem \ref{thm1.3}, which also extends an application of Byrne, Desai, and Tait's result  \cite{byrne2024general}.
\begin{corollary}\label{cor1}
    Let $\mathbb{H}$ be a family of graphs with $\gamma_{\mathbb{H}}\geq 1$ and $n$ be a sufficiently large integer.  If $B_{\gamma_\mathbb{H},n-\gamma_\mathbb{H}}$ is $\mathbb{H}$-subdivision-saturated,  then ${\rm SPEX}(n, \mathbb{H}_{\rm sub}) = B_{\gamma_\mathbb{H},n-\gamma_\mathbb{H}}$.
\end{corollary}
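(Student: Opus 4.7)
The plan is to extend the spectral-stability method of Zhai, Fang, and Lin (Theorem~\ref{thm1.2}) from the minor setting to subdivisions, with Theorem~\ref{lem2.1} (Bollob\'{a}s--Thomason--Mader) serving as the topological analogue of the Kostochka--Thomason linear edge bound. Fix $G\in \mathrm{SPEX}(n,\mathbb{H}_{\mathrm{sub}})$, write $\lambda=\lambda(G)$, and let $\mathbf{x}$ be its Perron eigenvector normalized by $x_{u^*}:=\max_v x_v=1$. Edge-monotonicity of the spectral radius together with extremality forces $G$ to be $\mathbb{H}$-subdivision-saturated, while Theorem~\ref{lem2.1} applied to any fixed $H_0\in\mathbb{H}$ gives $e(G)\leq 100\, e(H_0)\, n$ and hence $\lambda=O(\sqrt{n})$.

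For the matching lower bound one must first verify that $B_{\gamma_\mathbb{H},n-\gamma_\mathbb{H}}$ is itself $\mathbb{H}$-subdivision-free, which yields $\lambda \geq \lambda(B_{\gamma_\mathbb{H},n-\gamma_\mathbb{H}})=(1-o(1))\sqrt{\gamma_\mathbb{H}\, n}$. To see this, fix any $H\in\mathbb{H}$ with $\gamma_H=\gamma_\mathbb{H}$ and assume for contradiction that $TH\subseteq B_{\gamma_\mathbb{H},n-\gamma_\mathbb{H}}$; let $I$ be the set of branch vertices placed in the independent part. Then $|I|\geq |H|-\gamma_\mathbb{H}=\alpha(H)+1$, so by Gallai's identity and the trivial bound $\tau(H[I])\leq e(H[I])$ one obtains $e(H[I])\geq |I|-\alpha(H[I])\geq |I|-\alpha(H)$. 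On the other hand every edge of $H[I]$ must be realised by an internally disjoint $I$-to-$I$ path which consumes at least one non-branch clique vertex of $B_{\gamma_\mathbb{H},n-\gamma_\mathbb{H}}$, forcing $e(H[I])\leq \gamma_\mathbb{H}-(|H|-|I|)=|I|-\alpha(H)-1$, a contradiction.

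The heart of the argument is to identify a constant-sized ``heavy'' set $L=\{v:x_v\geq \eta\}$ for a suitable small constant $\eta>0$; this $L$ is destined to be the $\gamma_\mathbb{H}$-clique of the promised $B_{\gamma_\mathbb{H},n-\gamma_\mathbb{H}}$. Using the eigenvector equation at $u^*$, the bound $\lambda=\Theta(\sqrt{n})$, and $e(G)=O(n)$, standard Rayleigh-type estimates yield $|L|\leq C_\mathbb{H}$ and that all but $o(n)$ edges of $G$ are incident to $L$. A Rayleigh-quotient comparison with $B_{\gamma_\mathbb{H},n-\gamma_\mathbb{H}}$ then forces $|L|\geq \gamma_\mathbb{H}$. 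Conversely, if $|L|\geq \gamma_\mathbb{H}+1$, then by using the $|H|-\alpha(H)=\gamma_\mathbb{H}+1$ ``non-independent'' vertices of some $H\in\mathbb{H}$ as branch vertices placed inside $L$, and by routing the remaining $\alpha(H)$ branch vertices plus all required subdivision paths through the many length-$2$ connections between $L$-vertices provided by the periphery, one explicitly builds a $TH$---a contradiction. Hence $|L|=\gamma_\mathbb{H}$.

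To upgrade this to the spanning structure one shows $L$ is joined to every vertex of $V(G)\setminus L$ and that $G[L]=K_{\gamma_\mathbb{H}}$. For the join property, if $w\notin L$ misses a neighbour $u\in L$, delete an edge $ww'$ with $w'\notin L$ and add $wu$; since $x_u\gg x_{w'}$ the Rayleigh quotient strictly increases, so it suffices to check that the swap preserves $\mathbb{H}$-subdivision-freeness, which follows from the clique-counting of paragraph~2 applied to the new graph. For the clique property, the same counting shows that any non-edge inside $L$ can be added without creating a $TH$, contradicting saturation. The main obstacle I anticipate is precisely this preservation step: in the minor setting of \cite{zhai} a local edge rearrangement preserves minor-freeness for essentially contraction-theoretic reasons, but in the subdivision setting a single new edge may combine with a long path through the low-weight periphery to complete a fresh $TH$, so careful control over the lengths and internal-disjointness of periphery paths will be the technical heart of the argument.
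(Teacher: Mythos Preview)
In the paper, Corollary~\ref{cor1} is a one-line consequence of Theorem~\ref{thm1.3}: once every $G\in\mathrm{SPEX}(n,\mathbb{H}_{\mathrm{sub}})$ is known to contain $B_{\gamma_\mathbb{H},n-\gamma_\mathbb{H}}$ as a spanning subgraph, the saturation hypothesis forbids any further edge. Your proposal is therefore really a sketch of Theorem~\ref{thm1.3}, and its high-level architecture---linear edge bound via Theorem~\ref{lem2.1}, lower bound from Lemma~\ref{lem2.3}, a stability step producing the heavy set $L$ of size $\gamma_\mathbb{H}$, then an upgrade to a full join---matches the paper's Steps~1--3.

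The genuine gap is exactly the one you flag in your final paragraph. Your edge-swap for the join property (delete $ww'$, add $wu$) hinges on the claim that the new graph is still $\mathbb{H}$-subdivision-free, and you appeal to ``the clique-counting of paragraph~2''. But that counting is specific to $B_{\gamma_\mathbb{H},n-\gamma_\mathbb{H}}$: it uses that the non-$L$ part is independent, whereas after your swap the periphery still carries all the original edges of $G[S]$. A fresh $TH$ can pass through those edges, so neither the join step nor the analogous clique step goes through as written.

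The paper does \emph{not} try to preserve subdivision-freeness under such swaps. Instead it deliberately builds a graph $G_2$ with $\rho(G_2)>\rho(G^*)$ and accepts that $G_2$ must contain some $TH$; the point is to arrange that $G_2$ contains a long \emph{linear} path $P^0$ of length at least $\max_{H\in\mathbb{H}}2|H|^2$ inside the periphery. A minimal $TH$ in $G_2$, chosen to meet $S''$ maximally, is forced to contain all of $S''$, yet each subdivided edge can meet the linear path $P^0$ in at most two vertices, giving the contradiction $|V(P^0)|\le 2e(H)$. Producing $P^0$ already requires a preliminary transformation and a separate minimal-subdivision argument (Lemma~\ref{lem3.3}), and the inequality $\rho(G_2)>\rho(G^*)$ needs a double-eigenvector comparison (Claim~\ref{lem3.5}). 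This linear-path mechanism is the missing idea in your sketch; without it there is no control over subdivisions routed through the periphery after a local edge modification.
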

\begin{example}
    {\rm Let $\mathbb{H}=\{K_{r_1},\ldots, K_{r_s},C_{2\ell_1+1},\ldots, C_{2\ell_t+1},P_{2k_1},\ldots,P_{2k_q}\}$ be a set consisting of complete graphs, odd cycles and paths with even orders, where $3\leq r_1\leq \cdots\leq r_s$, $1\leq \ell_1\leq\cdots\leq \ell_t$ and $2\leq k_1\leq \cdots\leq k_q$. Then $\gamma_{\mathbb{H}}=\min\{r_1-2,\ell_1,k_1-1\}$. Notice that $B_{r_1-2,n-r_1+2}$ is $K_{r_1}$-subdivision-saturated, $B_{\ell_1,n-\ell_1}$ is $C_{2\ell_1+1}$-subdivision-saturated, and $B_{k_1-1,n-k_1+1}$ is $P_{2k_1}$-subdivision-saturated. Therefore,  $B_{\gamma_\mathbb{H},n-\gamma_\mathbb{H}}$ is $\mathbb{H}$-subdivision-saturated. Together with Corollary~\ref{cor1}, we obtain that ${\rm SPEX}(n, \mathbb{H}_{\rm sub}) = B_{\gamma_\mathbb{H},n-\gamma_\mathbb{H}}$.}
\end{example}
%It is routine to check that if $\mathbb{H}$ is a set consisting of  completes graphs, or odd cycles, or paths with odd length, % $\mathbb{H}=\{K_{r_1},K_{r_2},\ldots, K_{r_s}\}$ or $\mathbb{H}=\{C_{2r_1+1},C_{2r_2+1},\ldots, C_{2r_s+1}\}$, then $B_{\gamma_\mathbb{H},n-\gamma_\mathbb{H}}$ is $\mathbb{H}$-subdivision-saturated. Therefore, ${\rm SPEX}(n, \mathbb{H}_{\rm sub}) = B_{\gamma_\mathbb{H},n-\gamma_\mathbb{H}}$. %is a set consists of  every complete graph $K_r$ and every odd cycle $C_r$ is 
\subsection{Proof overviews and organization}

%\noindent{\bf Sketch of the proof of Theorem \ref{thm1.3}.} 
We prove Theorem \ref{thm1.3} by combining the following four techniques: graph transformations, the stability method, the eigenvector method and the double-eigenvector method. For a given graph family $\mathbb{H}$ with $\gamma_{\mathbb{H}}\geq 1$, choose an arbitrary graph $G^* \in {\rm SPEX}(n,\mathbb{H}_{\rm sub})$.  We  divide our proof into three steps. 

\begin{itemize}
    \item Step 1. Following the standard stability approach (Lemma \ref{lem2.4}), we first show that all $G^*$ must be structurally close (in edit distance) to our candidate extremal configuration 
(i.e., the generalized book). That is,  $G^*$ contains a set $L$ of exactly $\gamma_\mathbb{H}$ vertices such that each of which has degree close to $n-1$. The proof of Lemma~\ref{lem2.4} adapts the strategy from \cite{zhai} and the proof framework of \cite{byrne2024general} (see Section 4). % The proof strategy of Lemma \ref{lem2.4} parallels that of Zhai, Fang, and Lin \cite{zhai} and we utilize the high-level methodological framework outlined in \cite{byrne2024general} (see Section 4). 

\item Step 2. We partition the vertex set $V(G^*)\setminus L$ as $S'\cup S''$, where $S''=\{v:L\subseteq N_{G^*}(v)\}$. Our first goal is to show $S'=\emptyset$ (Lemma \ref{lem3.6}). Suppose not, we employ the following arguments. % Our proof proceeds by contradiction to establish $S'=\emptyset$ with the following steps.  
\begin{itemize}
    \item Prove $G^*[S'']$ contains a long path $P^0$ (Lemma \ref{lem3.3}). 
    \item By applying the two graph transformations in Figure \ref{fig:1}, we obtain  a graph $G$ containing a linear path with length at least $\max_{H\in \mathbb{H}} \{2|H|^2\}$, which has spectral radius greater than $\rho(G^*)$. It follows from the choice of $G^*$ that $G$ contains an $H$-subdivision for some $H\in \mathbb{H}$.% graph adding all edges in $G^*[L,S']$ and deleting all edges in $G^*[S']\cup G^*[S',S'']$, and removing all edges within $S''$ that are incident to some vertices in $P^0$ but do not belong to $E(P^0)$, we can get a graph $G$ with spectral radius greater than $\rho(G^*)$ (here we use two graph transformations in Figure \ref{fig:1}). 
    \item Choose a minimal $H$-subdivision in $G$, say $H_S$, such that it contains as many vertices as possible in $S''$. Then $S''\subseteq V(H_S)$. However,  each subdivided path in $H_S$ contains at most two vertices from the linear path of $G$, which contradicts the required length of the linear path  and thus proves $S' = \emptyset$. % By choosing a minimal $H$-subdivision containing as many vertices as possible in $S''$, we can get a contradiction. 
\end{itemize}
\item Step 3. We finish the proof by showing $G^*[L]$ is complete through a graph transformation and an argument analogous to that in Step 2. %argument mirroring Step 2 Finally, by utilizing a graph transformation and analogous argument as Step 2,  we prove that $G^*[L]$ is a complete graph. 
\end{itemize}
%In this paper, we integrate the stability method, the two-vector spectral radius estimating method and the absorbing method to prove Theorem~\ref{thm1.3}, further innovating in spectral radius estimation.}
 
The remainder of the paper is structured as follows.
Section~\ref{sec2} introduces necessary notation and preliminaries. 
In Section~\ref{sec3} we prove Theorem~\ref{thm1.3} using the stability method (Lemma~\ref{lem2.4}). The proof of Lemma~\ref{lem2.4} appears in Section~\ref{sec4}, and  Section~\ref{sec5} provides further characterization of graphs in ${\rm SPEX}(n,\mathbb{H}_{\rm sub})$. In Section 6, we give some concluding remarks. 

\section{Notation and preliminaries}\label{sec2}

\subsection{Notation}

Given a graph $G=(V(G),E(G))$, we always use $n:=|G|$ to denote the order of $G$. Its \textit{adjacency matrix} $A(G)$ is an {$n\times n$\, $0$-$1$ matrix} whose $(i,j)$-entry is $1$ if and only if $ij\in E(G)$.  For a connected graph $G$, it is obvious that $A(G)$ is a real symmetric, nonnegative and irreducible matrix. {Hence,} its eigenvalues are real and can be given in non-increasing order as $\rho_1(G)\geqslant \rho_2(G)\geqslant \cdots \geqslant \rho_n(G)$. The largest modulus of an eigenvalue of $A(G),$  denoted by $\rho(G),$ is called {the} \textit{spectral radius} of $G$. In the sequel, we will usually suppress the graph name from our notation. By the famous Perron-Frobenius Theorem, there exists a non-negative unit eigenvector, say ${\bf x}=(x_1,x_2,\ldots,x_n)^T,$ of $A(G)$ corresponding to  $\rho(G)$. For a vector ${\bf x}$ on vertices of $G,$ we denote by $x_u$ the entry of ${\bf x}$ at $u\in V(G).$

Let $G$ be a graph  with two disjoint vertex subsets $S, T\subseteq V(G)$. Denote by $G[S]$ the subgraph of $G$ induced by $S$ {and $G-S$ the subgraph of $G$ induced by $V(G)\setminus S$. For singletons, we write $G-u$ instead of $G-\{u\}$.} 
%For two disjoint subsets $S,T\subseteq V(G)$, l
Let $G[S,T]$ be the bipartite subgraph obtained from $G[S\cup T]$ by deleting all its edges within $S$ or $T$.
We use $e(S)$ and $e(S,T)$ to denote the numbers of edges in $G[S]$ and $G[S,T]$, respectively.
For a graph $G=(V,E)$ and a subset $E_1\subseteq \binom{V}{2}$, we define $G\pm E_1=(V,E\pm E_1)$ to be a simple  graph obtained from 
$G$ by adding or deleting edges in $E_1$. For $u\in V(G)$, let $N_G(u)$ denote its neighborhood in $G$. We write $N_S(u):=N_G(u)\cap S$ and $d_S(u):=|N_S(u)|$.
Let $G\cup G'$ be the union of two vertex-disjoint graphs $G$ and $G'$.

Let $H$ be a graph with $V(H)=\{v_1,\ldots,v_h\}$ and $E(H)=\{e_1,\ldots,e_{m}\}$. We say $G$ contains a \textit{model} of $H$  if all of the following hold: 

\begin{itemize}
    \item There is a vertex mapping $\phi: V(H) \to V(G)$ where $\phi(v_i) \neq \phi(v_j)$ for $i \neq j$. 
    \item For each edge $e_k = v_iv_j \in E(H)$, there is a path $P^k$ in $G$ connecting $\phi(v_i)$ and $\phi(v_j)$. In particular, $P^k=e_k$ if $e_k \in E(G)$.
    \item All paths ${P^k}$ are internally vertex-disjoint from each other and no internal vertex of any $P^k$
  belongs to $\{\phi(v_i):i\in [h]\}$. 
\end{itemize}
We always call vertices in $\{\phi(v_i):i\in [m]\}$ {\textit{branch vertices}} and internal vertices in each $P^k$ \textit{subdivision vertices}. 
%For a graph $H$ with $V(H)=\{v_1,\ldots,v_h\}$ and $E(H)=\{e_1,\ldots,e_{m}\}$, a \textit{model} of $H$ in a graph $G$ is a collection of vertex-disjoint paths $P^1,\ldots,P^{m}$ such that for any $e_i\in E(H)$, there exists a path $P^i$ that connects the two end vertices of $e_i$. In particular, if $e_i\in E(G)$, then $P^{i}$ is empty. 
It is not hard to see that $G$  contains an $H$-subdivision if and only if $G$ admits a model of $H$. %Given a model $P^1,\ldots,P^m$ of $H$ in $G$, t
The tuple $(P^1,\ldots,P^m)$ is called an 
$H$-\textit{partition} of the $H$-subdivision in $G$. % in $G$. 
An $H$-subdivision is \textit{minimal} if its corresponding $H$-partition minimizes $\sum_{i\in [m]}|V(P^i)|$ among all possible $H$-partitions in $G$. %Let $G$ be a graph with an $H$-subdivision and $P^1,\ldots,P^m$ be a model of $H$ in $G$. Then $(V(P^1),\ldots,V(P^m))$ is called an $H$-\textit{partition} of $G$. A certain $H$-subdivision is said to be \textit{minimal}, if and only if its corresponding $H$-partition is minimal, that is, $\sum_{i\in [m]}|V(P^i)|$ is minimum over all $H$-partitions of $G$.
%For a subdivision $TH$, we say that a vertex is a {\textit{subdivision vertex}} if it belongs to $P^i$ for some $i$. We call any other vertex in $TH$ a {\textit{branch vertex}}.

\subsection{Preliminaries}
%The following result is an immediate consequence of Theorem \ref{thm1.1}, which will be frequently used in the proof of Theorem \ref{thm1.3}.{\red{cite Mader's result?}}

 %, where $C$ is defined in the Theorem  \ref{lem2.1}. 
In this subsection, we establish some preliminaries that will be essential for our proof. % we give some preliminaries. 
The subsequent one is obvious.

\begin{lemma}\label{lem2.2}
    Let $H$ be a graph. Then both $B_{\gamma_H+1,\alpha_H}$ and $K_{\gamma_H+1,\alpha_H+\binom{\gamma_H+1}{2}}$ contain $H$-subdivisions.
\end{lemma}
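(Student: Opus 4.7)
The plan is to exhibit explicit models of $H$ in each of the two target graphs by partitioning $V(H)$ along a maximum independent set. Let $I^*\subseteq V(H)$ be an independent set with $|I^*|=\alpha_H$, and set $L^*:=V(H)\setminus I^*$, so $|L^*|=|H|-\alpha_H=\gamma_H+1$. This is the only structural information about $H$ we shall use: every edge of $H$ lies either inside $L^*$ or between $L^*$ and $I^*$, because $I^*$ is independent.

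For the first claim, I would observe that $B_{\gamma_H+1,\alpha_H}=K_{\gamma_H+1}\vee \alpha_H K_1$ has exactly $|H|$ vertices, partitioned into a clique $L$ of size $\gamma_H+1$ and an independent set $I$ of size $\alpha_H$. Choose any bijections $\phi_L:L^*\to L$ and $\phi_I:I^*\to I$, and define $\phi=\phi_L\cup \phi_I$ on $V(H)$. Every edge of $H[L^*]$ is realised by the corresponding edge of the clique $L$, and every edge between $L^*$ and $I^*$ is realised by an edge of the complete bipartite join between $L$ and $I$. Thus $H$ itself embeds as a subgraph of $B_{\gamma_H+1,\alpha_H}$, which is trivially an $H$-subdivision (all subdivision paths have length one).

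For the second claim, write $K_{\gamma_H+1,\alpha_H+\binom{\gamma_H+1}{2}}$ with parts $A$ of size $\gamma_H+1$ and $B$ of size $\alpha_H+\binom{\gamma_H+1}{2}$. Split $B$ as $B=B_0\cup B_1$ with $|B_0|=\alpha_H$ and $|B_1|=\binom{\gamma_H+1}{2}$. Map $L^*$ bijectively onto $A$ and $I^*$ bijectively onto $B_0$; this places the branch vertices. Edges of $H$ between $L^*$ and $I^*$ now sit between $A$ and $B_0$ and are realised directly by the complete bipartite structure. For each edge $uv\in E(H[L^*])$, assign a private vertex $w_{uv}\in B_1$ (possible since $|B_1|=\binom{\gamma_H+1}{2}\geq |E(H[L^*])|$) and realise $uv$ by the length-two path $\phi(u)\,w_{uv}\,\phi(v)$, whose edges both exist in $K_{\gamma_H+1,|B|}$. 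The subdivision vertices $w_{uv}$ are all distinct and lie outside the branch set, giving a valid $H$-model.

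Since both constructions are essentially book-keeping, I do not anticipate any serious obstacle. The only point that deserves a sentence of justification is that the counts line up: $|A|=\gamma_H+1$ matches $|L^*|$, while $|B|=\alpha_H+\binom{\gamma_H+1}{2}$ is exactly what is needed to accommodate the $\alpha_H$ branch vertices in $I^*$ together with one subdivision vertex per potential edge inside $L^*$, of which there are at most $\binom{|L^*|}{2}=\binom{\gamma_H+1}{2}$. No finer information about $H$ (e.g.\ chromatic number, connectivity) enters the argument.
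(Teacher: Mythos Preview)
Your argument is correct and is exactly the natural construction the paper has in mind; the paper itself omits the proof entirely, declaring the lemma ``obvious.'' Your partition of $V(H)$ along a maximum independent set and the bookkeeping with subdivision vertices in $B_1$ are precisely the intended verification.
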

Our next lemma, which follows directly from \cite[Lemma 2.6]{Zhai-minor}, provides a lower bound  for the maximum spectral radius of $\mathbb{H}$-subdivision-free graphs. 

%The next result follows directly from \cite[Lemma 2.6]{Zhai-minor}.
\begin{lemma}\label{lem2.3}
Let $\mathbb{H}$ be a family of graphs. Then $B_{\gamma_\mathbb{H},n-\gamma_\mathbb{H}}$ is $\mathbb{H}$-subdivision-free and ${\rm spex}(n,\mathbb{H}_{\rm sub})\geq \sqrt{\gamma_\mathbb{H}(n-\gamma_\mathbb{H})}$.%{\red{proof of Lemma 2.2}}
\end{lemma}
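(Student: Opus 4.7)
The statement decomposes into two claims. The spectral lower bound is immediate once the first is established: since $B_{\gamma_\mathbb{H}, n-\gamma_\mathbb{H}}$ contains the complete bipartite graph $K_{\gamma_\mathbb{H}, n-\gamma_\mathbb{H}}$ as a spanning subgraph (take only the join edges between the clique part and the independent part), monotonicity of the spectral radius under edge addition yields $\rho(B_{\gamma_\mathbb{H}, n-\gamma_\mathbb{H}}) \geq \rho(K_{\gamma_\mathbb{H}, n-\gamma_\mathbb{H}}) = \sqrt{\gamma_\mathbb{H}(n-\gamma_\mathbb{H})}$. So the substantive content is to show $B := B_{\gamma_\mathbb{H}, n-\gamma_\mathbb{H}}$ is $\mathbb{H}$-subdivision-free.

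Write $V(B) = L \cup I$, where $L$ is the clique part of size $\gamma_\mathbb{H}$ and $I$ is the independent part of size $n-\gamma_\mathbb{H}$. Fix any $H \in \mathbb{H}$; the plan is to rule out an $H$-subdivision in $B$ by a double count on $L$. Suppose for contradiction that $B$ admits a model of $H$ with branch map $\phi:V(H) \to V(B)$ and internally vertex-disjoint paths $\{P^{uv}\}_{uv \in E(H)}$, and set $W = \{u \in V(H): \phi(u) \in L\}$. The key structural fact is that $I$ is independent in $B$, so \emph{any} path in $B$ between two vertices of $I$ must use at least one internal vertex lying in $L$ (since the only neighbors of an $I$-vertex are in $L$). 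Consequently, for every edge $uv$ of the induced subgraph $H[V(H)\setminus W]$ (both of whose endpoints are mapped into $I$), the path $P^{uv}$ contributes at least one internal $L$-vertex; because the subdivision paths are internally disjoint from each other and from all branch vertices, these contributed $L$-vertices are pairwise distinct and disjoint from $\phi(W)$. This yields
\[
|W| + e(H[V(H)\setminus W]) \;\leq\; |L| \;=\; \gamma_\mathbb{H}.
\]

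To finish, I will invoke the elementary fact that any graph on $m$ vertices with $e$ edges has an independent set of size at least $m-e$ (one endpoint per edge is a vertex cover of size at most $e$, so $\alpha \geq m - e$). Applied to $H[V(H)\setminus W]$, this gives $\alpha_H \geq \alpha(H[V(H)\setminus W]) \geq |H| - |W| - e(H[V(H)\setminus W])$, equivalently
\[
|W| + e(H[V(H)\setminus W]) \;\geq\; |H| - \alpha_H \;=\; \gamma_H + 1 \;\geq\; \gamma_\mathbb{H} + 1,
\]
contradicting the previous display and completing the argument. The only slightly subtle point is the structural observation that a path in $B$ between two $I$-vertices must contain an internal $L$-vertex; this is immediate from the defining structure of $B$, so I do not anticipate any serious obstacle in executing the plan.
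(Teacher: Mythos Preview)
Your proof is correct. The paper does not give its own argument here; it simply cites Lemma~2.6 of \cite{Zhai-minor}, which shows that $B_{\gamma_\mathbb{H},n-\gamma_\mathbb{H}}$ is $\mathbb{H}$-\emph{minor}-free, and then uses the standard implication that minor-free implies subdivision-free. Your route is different: you argue directly for subdivision-freeness via a counting argument on $L$, combining the observation that every $I$--$I$ subdivision path must consume an internal $L$-vertex with the elementary bound $\alpha(G) \geq |V(G)| - e(G)$. This makes the lemma self-contained and avoids the external citation, at the cost of a short combinatorial argument; the paper's route is shorter on the page but outsources the work. Both are sound, and your execution has no gaps.
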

%\begin{proof}
%   Let $H$ be an arbitrary graph in $\mathbb{H}$. We first claim that $B_{\gamma_H,n-\gamma_H}$ contains no  Suppose to the contrary that $B_{\gamma_\mathbb{H},n-\gamma_\mathbb{H}}$ contains an $H$-subdivision for some $H\in \mathbb{H}$. 
%\end{proof}

In the following paper, for a given graph family $\mathbb{H}$, we define the constant $C_\mathbb{H}$ as
$$
C_\mathbb{H}>\max_{H\in \mathbb{H}}\left\{2|H|^3,100\cdot e(H)\right\}.
$$
Following the standard stability approach, we initiate our analysis by characterizing the approximate local structure of graphs in $\mathrm{SPEX}(n,\mathbb{H})$. This establishes that all near-extremal graphs must be structurally close (in edit distance) to the generalized book.
%Following the standard stability approach, we first characterize the approximate local  structure of graphs in ${\rm SPEX}(n,\mathbb{H})$. 
Our proof strategy parallels that of Zhai, Fang, and Lin \cite{zhai}, while incorporating necessary modifications for our proof. For completeness and readability, we defer the detailed proof to Section~4.

%We can obtain a partial structure of $G$ using the common stability estimation method as follows. We adopt a similar logical framework from Zhai, Fang and Lin \cite{zhai}. To ensure the readability and integrity of the article, we post the proof in the latter section.
\begin{lemma}[Stability result]\label{lem2.4}
    Assume that $n\in \mathbb{N}$ is large enough and $\mathbb{H}$ is a family of graphs with $\gamma_{\mathbb{H}}\geq 1$. 
    Let $G$ be an $\mathbb{H}$-subdivision-free graph of order $n$. Let ${\bf x}=(x_1,x_2,\ldots, x_n)^T$ be a non-negative unit eigenvector corresponding to $\rho(G)$ with $x_{u^{*}}=\max_{u \in V(G)}x_{u}$.
    If $\rho(G) \geq \sqrt{\gamma_\mathbb{H}(n-\gamma_\mathbb{H})}$, then $G$ admits a set $L$ of exactly $\gamma_\mathbb{H}$ vertices such that 
    $$
    x_{u} \geq \left(1-\frac{1}{2(10C_\mathbb{H})^{2}}\right)x_{u^{*}}\ \ \text{and}\ \ d_{G}(u) \geq \left(1-\frac{1}{(10C_\mathbb{H})^{2}}\right)n\ \text{for every}\ u \in L.
    $$ 
\end{lemma}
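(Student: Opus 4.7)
The plan is to adapt the stability argument of Zhai, Fang and Lin~\cite{zhai} and Byrne, Desai and Tait~\cite{byrne2024general} to the subdivision setting by replacing their use of minor-containment with the subdivision-containment of Lemma~\ref{lem2.2}. Throughout, write $\gamma := \gamma_{\mathbb{H}}$ and $\epsilon := 1/(10C_{\mathbb{H}})^{2}$, fix $H^{*} \in \mathbb{H}$ achieving $\gamma_{H^{*}} = \gamma$, and set $\alpha^{*} := \alpha_{H^{*}}$. Define the candidate set
\[
L := \bigl\{u \in V(G) : x_{u} \ge (1 - \epsilon/2)\, x_{u^{*}}\bigr\}.
\]
The aim is to prove $|L| = \gamma$ and then to upgrade this eigenvector bound to the sharp degree bound $d_{G}(u) \ge (1-\epsilon)n$ for every $u \in L$.

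Two ingredients drive the argument. First, Theorem~\ref{lem2.1} applied to $H^{*}$ gives $e(G) \le 100\,e(H^{*})\,n < C_{\mathbb{H}} n$, and consequently $\rho(G)^{2} \le \max_{u} \sum_{w \sim u} d_{G}(w) \le 2 e(G) \le 2 C_{\mathbb{H}} n$. Second, by Lemma~\ref{lem2.2}, every $(\gamma+1)$-subset $T \subseteq V(G)$ has common neighborhood of size strictly less than $D := \alpha^{*} + \binom{\gamma+1}{2}$; otherwise $K_{\gamma+1,D}$ would embed into $G$ and produce an $H^{*}$-subdivision, a contradiction.

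To show $|L| \le \gamma$, I would assume $|L| \ge \gamma+1$, select $u_{1}, \dots, u_{\gamma+1} \in L$, and sum the eigenvector equation. Splitting according to membership in the common neighborhood $C := \bigcap_{i} N_{G}(u_{i})$ and using the codegree bound $|C| < D$ yields
\[
(\gamma+1)(1-\epsilon/2)\,\rho\, x_{u^{*}} \le \sum_{i=1}^{\gamma+1} \rho\, x_{u_{i}} \le \gamma \sum_{v} x_{v} + (D-1)\, x_{u^{*}},
\]
and combining this with the identity $\rho \sum_{v} x_{v} = \sum_{v} d_{G}(v) x_{v} \le 2 e(G)\, x_{u^{*}}$ together with $\rho \ge \sqrt{\gamma(n-\gamma)}$ leads to a quantitative contradiction for $n$ large. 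Conversely, $|L| \ge \gamma$ follows from expanding $\rho^{2} x_{u^{*}} = \sum_{v} |N(u^{*}) \cap N(v)|\, x_{v}$: the contribution of vertices with codegree less than $\gamma$ with $u^{*}$ cannot account for the full sum, so pigeonholing forces at least $\gamma$ vertices whose eigenvector entries meet the $(1-\epsilon/2)$ threshold and therefore lie in $L$.

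The final and most delicate step is the degree bound. For $u \in L$, the raw eigenvector equation yields only $d_{G}(u) \ge \rho(1-\epsilon/2) = \Omega(\sqrt{n})$, which is far weaker than the required $(1-\epsilon)n$. Upgrading to the sharp bound uses both the established equality $|L| = \gamma$ and the constant codegree bound applied to $L \cup \{v\}$ for candidate non-neighbors $v \notin L$: if $u$ has more than $\epsilon n$ non-neighbors outside $L$, then the corresponding loss in $\sum_{v \sim u} x_{v}$, quantified by comparing it to $\sum_{v} x_{v}$ and to $\rho x_{u} \ge \rho(1-\epsilon/2) x_{u^{*}}$, produces a contradiction for $n$ large. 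The main technical obstacle lies precisely in this final degree upgrade, where the $O(n)$ edge bound and the constant codegree bound must combine in a sharp way; the specific choice of exponent $1/(10C_{\mathbb{H}})^{2}$ is calibrated to absorb the $O(\sqrt{n})$ and $O(1)$ slack that arise in this accounting.
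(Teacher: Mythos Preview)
Your upper-bound argument for $|L|\le\gamma$ does not close. After summing the eigenvector equations and using $\rho\sum_v x_v\le 2e(G)x_{u^*}\le 2C_{\mathbb H}nx_{u^*}$, you arrive at
\[
(\gamma+1)(1-\epsilon/2)\,\rho \;\le\; \frac{2\gamma C_{\mathbb H}n}{\rho}+D-1.
\]
Plugging in $\rho\ge\sqrt{\gamma(n-\gamma)}$ gives, for large $n$, the requirement $(\gamma+1)(1-\epsilon/2)\le 2C_{\mathbb H}+o(1)$. This is no contradiction at all: $C_{\mathbb H}>100\,e(H^*)$ is typically enormous compared to $\gamma+1$ (e.g.\ for $\mathbb H=\{K_3\}$ one has $\gamma=1$ but $C_{\mathbb H}>300$). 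The first-moment eigenvector sum is simply too lossy, and the same obstacle would reappear in your degree-upgrade step.

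The paper avoids this by reversing the order of your last two steps and by working with a second-moment expansion. It introduces a hierarchy $L^\lambda=\{u:x_u\ge(10C_{\mathbb H})^{-\lambda}x_{u^*}\}$ and expands $\rho^2 x_u$, splitting the double sum over $L^\lambda$ versus its complement (Claim~\ref{lem4.2}). This yields first that $|L^4|$ is bounded by a constant (Claim~\ref{lem4.3}), and then---crucially using Lemma~\ref{lem2.2} to bound the number of vertices with $\ge\gamma$ neighbours in the small set $L^4_{1,2}$---a sharp relation $d_G(u)\ge(x_u/x_{u^*}-o(1))n$ for every $u\in L^4$ (Claim~\ref{lem4.4}). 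Only after the degree bound is in hand does the paper pin down $|L^1|=\gamma$: the upper bound follows because $\gamma+1$ vertices of degree $(1-\epsilon)n$ have a common neighbourhood of linear size, forcing a $K_{\gamma+1,D}$ and hence an $H^*$-subdivision via Lemma~\ref{lem2.2}. Your codegree idea is the right endgame, but it needs the degree bound as input, not the raw eigenvector threshold.
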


\section{Proof of Theorem \ref{thm1.3}}\label{sec3}
In this section, we give the proof of Theorem \ref{thm1.3} by using the stability result. Assume that  $\mathbb{H}$ is a family of graphs with $\gamma_{\mathbb{H}}\geq 1$. A graph $H$ is said to be \textit{minimal} with respect to $\mathbb{H}$, if $H\in \mathbb{H}$ such that (i) $\gamma_H=\gamma_{\mathbb{H}}$, (ii) subject to (i), $|H|$ is also minimal. Let $H^*$ be an arbitrary minimal graph with respective to $\mathbb{H}$. It is obvious that all minimal graphs have the same independence number. Thus, we can set $\alpha_{\mathbb{H}}:=\alpha_{H^*}$. 

Choose an arbitrary graph $G^* \in {\rm SPEX}(n,\mathbb{H}_{\rm sub})$, where $n$ is sufficiently large. 
Let ${\bf x}=(x_{1},\ldots,x_{n})^T$ be a non-negative unit eigenvector of 
$A(G^*)$  corresponding to $\rho(G^*)$, and $u^{*} \in V(G^*)$ be a vertex such that $x_{u^{*}}=\max_{u \in  V(G^*)}x_{u}$.
Set $\rho^{*}:=\rho(G^*)$.
It follows from Lemma~\ref{lem2.3} that  $\rho^{*}\geq \sqrt{\gamma_\mathbb{H}(n-\gamma_\mathbb{H})}$. Together with Lemma~\ref{lem2.4}, we obtain that $G^*$ contains a set $L$ of $\gamma_\mathbb{H}$ vertices such that for every $u \in L$ one has 
$$x_{u} \geq \left(1-\frac{1}{2(10C_\mathbb{H})^{2}}\right)x_{u^{*}}\ \ \text{and}\ \ d_{G^*}(u) \geq \left(1-\frac{1}{(10C_\mathbb{H})^{2}}\right)n.
$$ 

For convenience, assume that $L:=\{v_1,v_2,\ldots ,v_{\gamma_\mathbb{H}}\}$.   
Let $S=V(G^*)\setminus L$ % Now, we divide $V(G^*)$ into $L\cup S$ 
and we further divide $S$ into $S'\cup S''$, where $S''=\{v:L\subseteq N_{G^*}(v)\}$. Our main proof consists of two key steps: (i) $S'=\emptyset$; (ii) $G^*[L]$ induces a complete graph.

Notice that every vertex in $L$ has at most $\frac{n}{(10C_\mathbb{H})^2}$ non-neighbors in $S'$.
Hence, %for every  $H\in\mathbb{H}$ one has 
\begin{align}\label{eq:1}
    |S'|\leq\frac{n}{(10C_\mathbb{H})^2}|L|=\frac{\gamma_\mathbb{H}n}{(10C_\mathbb{H})^2}\ \ \text{and}\ \ |S''|\geq n-\gamma_\mathbb{H}-\frac{\gamma_\mathbb{H}n}{(10C_\mathbb{H})^2}>\binom{\gamma_\mathbb{H}}{2}+\max_{H\in \mathbb{H}}\{|H|\}.
\end{align}
Our first goal is to prove that $S'=\emptyset$. {For this, we need to estimate the degree of vertices in $S$ and the entry in ${\bf x}$ corresponding to vertices in $S$.} 

\begin{lemma}\label{lem3.1}
      For every $v\in S'$, % and each $H\in\mathbb{H}$, 
    we have $d_{S''}(v)\leq \max_{H\in \mathbb{H}}\{|H|\}-2$; for every $v\in S''$, we have $d_{S''}(v)\leq\alpha_{\mathbb{H}}$.
\end{lemma}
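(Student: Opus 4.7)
The plan is to prove both inequalities by contradiction: assume some $v$ violates the stated bound, and then exhibit an $H^*$-subdivision in $G^*$, contradicting that $G^*$ is $\mathbb{H}$-subdivision-free. Here $H^*$ denotes the minimal graph with respect to $\mathbb{H}$ fixed at the beginning of the section, so $|H^*|=\gamma_\mathbb{H}+\alpha_\mathbb{H}+1$; let $I^*\subseteq V(H^*)$ be a maximum independent set, with $|I^*|=\alpha_\mathbb{H}$ and $|V(H^*)\setminus I^*|=\gamma_\mathbb{H}+1$. The common skeleton of both constructions is to use as branch vertices $L\cup\{v\}$ (realizing the image of $V(H^*)\setminus I^*$) together with $\alpha_\mathbb{H}$ vertices $u_1,\dots,u_{\alpha_\mathbb{H}}\in N_{S''}(v)$ (realizing the image of $I^*$). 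Since every vertex of $S''$ is adjacent to every vertex of $L$ by definition, all edges of $H^*$ between the $I^*$-image and the $V(H^*)\setminus I^*$-image are automatically realized as direct edges of $G^*$.

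For $v\in S''$ with $d_{S''}(v)\geq\alpha_\mathbb{H}+1$, pick any $\alpha_\mathbb{H}$ vertices of $N_{S''}(v)$ as $u_1,\dots,u_{\alpha_\mathbb{H}}$. All edges $vv_i$ of $H^*$ are realized directly since $v\in S''$, so the only potentially unrealized edges of $H^*$ are pairs $v_iv_j$ with $v_iv_j\notin E(G^*)$, of which there are at most $\binom{\gamma_\mathbb{H}}{2}$. For any such pair the common neighbourhood of $v_i,v_j$ in $S''$ has size at least $|S''|-2n/(10C_\mathbb{H})^2$ (since each vertex of $L$ has at most $n/(10C_\mathbb{H})^2$ non-neighbours), which is linear in $n$ by~\eqref{eq:1}. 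A greedy choice yields pairwise distinct length-two subdivision vertices avoiding all previously chosen vertices, completing an $H^*$-subdivision and contradicting $\mathbb{H}$-subdivision-freeness.

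For $v\in S'$ with $d_{S''}(v)\geq\max_{H\in\mathbb{H}}\{|H|\}-1$, we must additionally handle the possibility that some edges $vv_i$ of $H^*$ are missing in $G^*$ (at most $\gamma_\mathbb{H}$ of them, since only $\gamma_\mathbb{H}$ vertices of $V(H^*)\setminus I^*$ are distinct from the pre-image of $v$). The trick is to reserve inside $N_{S''}(v)$ not only the $\alpha_\mathbb{H}$ branch vertices $u_1,\dots,u_{\alpha_\mathbb{H}}$ but also $\gamma_\mathbb{H}$ additional vertices $w_1,\dots,w_{\gamma_\mathbb{H}}$ that serve as length-two subdivision vertices for the problematic $vv_i$ edges; any such $w$ satisfies $wv\in E(G^*)$ by definition of $N_{S''}(v)$ and $wv_i\in E(G^*)$ since $w\in S''$. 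This allocation is possible because $\alpha_\mathbb{H}+\gamma_\mathbb{H}=|H^*|-1\leq\max_{H\in\mathbb{H}}\{|H|\}-1\leq d_{S''}(v)$. The remaining missing edges $v_iv_j$ inside $L$ are resolved exactly as in the previous paragraph, using common neighbours in $S''$ disjoint from everything already chosen. The resulting $H^*$-subdivision is the desired contradiction.

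The main obstacle is the disjointness and adjacency bookkeeping in the $v\in S'$ case, since the subdivision vertices for the $v$--$v_i$ paths are confined to the constrained set $N_{S''}(v)$, whereas those for the $v_i$--$v_j$ paths can be drawn from the much larger reservoir $S''$. The identity $\alpha_\mathbb{H}+\gamma_\mathbb{H}=|H^*|-1$ is exactly what forces the threshold $\max_{H\in\mathbb{H}}\{|H|\}-1$, giving the stated bound $\max_{H\in\mathbb{H}}\{|H|\}-2$; the bound $\alpha_\mathbb{H}$ in the $S''$-case is immediate from the same construction with no reserve needed.
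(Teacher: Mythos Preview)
Your proof is correct and follows essentially the same approach as the paper: both take $\alpha_\mathbb{H}+\gamma_\mathbb{H}$ neighbours of $v$ in $S''$, use $\alpha_\mathbb{H}$ of them as the branch vertices for the independent part and the other $\gamma_\mathbb{H}$ as length-two subdivision vertices for the $v$--$v_i$ paths, then greedily pick common $S''$-neighbours for the missing $v_iv_j$ edges via~\eqref{eq:1}. The only cosmetic difference is that the paper packages the construction as a $B_{\gamma_\mathbb{H}+1,\alpha_\mathbb{H}}$-subdivision and invokes Lemma~\ref{lem2.2}, whereas you map directly onto $H^*$ using a fixed maximum independent set $I^*$.
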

  \begin{proof}
    We only prove that $d_{S''}(v)\leq \max_{H\in \mathbb{H}}\{|H|\}-2$ for every  $v\in S'$, and the other inequality can be proved similarly.
    Suppose that there exists a vertex $v_0\in S'$ such that $$d_{S''}(v_0)\geq\max_{H\in \mathbb{H}}\{|H|\}-1\geq |H^*|-1=\gamma_{\mathbb{H}}+\alpha_{\mathbb{H}},$$
    where $H^*$ is a minimal graph with respect to $\mathbb{H}$. 
    Let 
    $$
    \{u_1,\ldots, u_{\gamma_{\mathbb{H}}},w_1,\ldots, w_{\alpha_{\mathbb{H}}}\}\subseteq N(v_0)\cap S''.
    $$
    Clearly, $G^*$ contains a set of internally disjoint paths $\{v_0u_iv_i:i\in \{1,\ldots,\gamma_{\mathbb{H}}\}\}$.
    Furthermore, based on~\eqref{eq:1}, for each pair of vertices $v_i,v_j\in L$,  there exists a path $P_{ij}=v_iv_{i,j}v_j$ such that $v_{i,j}\in S''\setminus \{u_1,\ldots, u_{\gamma_{\mathbb{H}}},w_1,\ldots, w_{\alpha_{\mathbb{H}}}\}$ and $v_{i,j}=v_{i',j'}$ if and only if $(i,j)=(i',j')$. Therefore, $G^*[L\cup \{v_0,u_1,\ldots, u_{\gamma_{\mathbb{H}}}\}\cup \{v_{i,j}:1\leq i<j\leq \gamma_{\mathbb{H}}\}]$ forms a subdivision of $K_{\gamma_{\mathbb{H}}+1}$ with branch vertices $\{v_0,\ldots, v_{\gamma_{\mathbb{H}}}\}$. 
    
    Notice that $w_1,\ldots, w_{\alpha_{\mathbb{H}}}$ are adjacent to all vertices in $\{v_0,\ldots, v_{\gamma_{\mathbb{H}}}\}$. It follows that $G^*$ contains a $B_{\gamma_{\mathbb{H}}+1,\alpha_{\mathbb{H}}}$-subdivision (i.e., a $B_{\gamma_{H^*}+1,\alpha_{H^*}}$-subdivision). Together with Lemma~\ref{lem2.2}, $G^*$ contains a subdivision of $H^*$, a contradiction.
  \end{proof}

\begin{lemma}\label{lem3.2}
    For every vertex  $v\in S$ we have $x_v\leq\frac{x_{u^*}}{20C_\mathbb{H}}$.
\end{lemma}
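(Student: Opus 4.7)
The plan is to estimate $x_v$ for $v \in S$ via the eigenvector equation $\rho^* x_v = \sum_{w \in N(v)} x_w$ and decompose the neighborhood of $v$ according to the partition $V(G^*) = L \cup S'' \cup S'$. The $L$--contribution is at most $\gamma_\mathbb{H}\, x_{u^*}$ since $|L| = \gamma_\mathbb{H}$ and each $x_u \leq x_{u^*}$; the $S''$--contribution is at most $(\max_{H \in \mathbb{H}}|H| - 2)\, x_{u^*}$ by Lemma~\ref{lem3.1}. For the $S'$--contribution I would combine Cauchy--Schwarz with $\|\mathbf{x}\|_2 = 1$ and the size bound $|S'| \leq \gamma_\mathbb{H} n/(10 C_\mathbb{H})^2$ from \eqref{eq:1}:
\[
\sum_{w \in N(v) \cap S'} x_w \;\leq\; \sqrt{|S'|}\cdot\Bigl(\sum_{w \in S'} x_w^{\,2}\Bigr)^{1/2} \;\leq\; \frac{\sqrt{\gamma_\mathbb{H} n}}{10 C_\mathbb{H}}.
\]

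Combining these three estimates with the lower bound $\rho^* \geq \sqrt{\gamma_\mathbb{H}(n-\gamma_\mathbb{H})}$ from Lemma~\ref{lem2.3} and dividing through by $\rho^*$ yields, for $n$ sufficiently large, an inequality of the form $x_v \leq o(x_{u^*}) + O(1/C_\mathbb{H})$. The first two terms of the decomposition are handled entirely by Lemma~\ref{lem3.1} and the size of $L$, so the real work lies in the $S'$--term.

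The main obstacle is converting this bound, whose last summand is absolute rather than proportional to $x_{u^*}$, into the stated ratio $x_v \leq x_{u^*}/(20 C_\mathbb{H})$. For this I will establish a lower bound $x_{u^*} = \Omega(1/\sqrt{\gamma_\mathbb{H}})$. The idea is that the Perron vector is concentrated on $L$: by Lemma~\ref{lem2.4} one has $\sum_{u \in L} x_u^{\,2} \geq \gamma_\mathbb{H}\bigl(1 - 1/(10 C_\mathbb{H})^2\bigr) x_{u^*}^{\,2}$, while the total edge-count bound $e(G^*) \leq C_\mathbb{H}\, n$ from Theorem~\ref{lem2.1}, coupled with the eigenvector equation, can be used to show that a definite positive fraction of $\|\mathbf{x}\|_2^{\,2}$ must sit on $L$ in order to sustain $\rho^* \geq \sqrt{\gamma_\mathbb{H}(n-\gamma_\mathbb{H})}$. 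Plugging the resulting lower bound on $x_{u^*}$ back into the earlier estimate absorbs the absolute $O(1/C_\mathbb{H})$ term into a multiple of $x_{u^*}/(20 C_\mathbb{H})$, giving the claimed inequality for $n$ large enough.
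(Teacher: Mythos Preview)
Your decomposition of $N(v)$ into $L$, $S''$, and $S'$ is exactly right, and the bounds on the $L$-- and $S''$--contributions via Lemma~\ref{lem3.1} match the paper. The gap is in the $S'$--term.

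Your Cauchy--Schwarz step gives
\[
\sum_{w\in N(v)\cap S'} x_w \;\le\; \sqrt{|S'|}\,\|\mathbf{x}_{S'}\|_2 \;\le\; \frac{\sqrt{\gamma_\mathbb{H} n}}{10C_\mathbb{H}},
\]
and after dividing by $\rho^*\ge\sqrt{\gamma_\mathbb{H}(n-\gamma_\mathbb{H})}$ this contributes $\approx \frac{1}{10C_\mathbb{H}}$ to the bound on $x_v$. But the target is $x_v\le \frac{x_{u^*}}{20C_\mathbb{H}}$, and since $\|\mathbf{x}\|_2=1$ forces $x_{u^*}\le 1$, the right-hand side is at most $\frac{1}{20C_\mathbb{H}}$. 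No lower bound of the form $x_{u^*}=\Omega(1/\sqrt{\gamma_\mathbb{H}})$ can close this factor of~$2$: you would need $x_{u^*}\ge 2$, which is impossible. The problem is that bounding $\|\mathbf{x}_{S'}\|_2$ by $1$ throws away exactly the information you need, and recovering it would amount to proving the lemma itself.

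The paper avoids this by keeping everything proportional to $x_{u^*}$. Instead of Cauchy--Schwarz, it bounds $\sum_{w\in N(v)\cap S'}x_w\le \sum_{w\in S'}x_w$ and then applies the eigenvector equation \emph{once more}, summed over $S'$:
\[
\rho^*\sum_{w\in S'}x_w \;=\; \sum_{w\in S'}\sum_{u\sim w}x_u \;\le\; \Bigl(\sum_{w\in S'}d_{G^*}(w)\Bigr)x_{u^*}\;\le\; \bigl(C_\mathbb{H}|S'|+2e(S')\bigr)x_{u^*}\;\le\; 3C_\mathbb{H}|S'|\,x_{u^*},
\]
using Lemma~\ref{lem3.1} for $d_{L\cup S''}(w)$ and Theorem~\ref{lem2.1} for $e(S')$. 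This yields $\sum_{w\in S'}x_w\le \frac{3\gamma_\mathbb{H} n}{100C_\mathbb{H}\rho^*}x_{u^*}$, a bound that already carries the factor $x_{u^*}$. After dividing by $\rho^*$ you get a contribution $\approx \frac{3}{100C_\mathbb{H}}x_{u^*}<\frac{x_{u^*}}{20C_\mathbb{H}}$, and no separate lower bound on $x_{u^*}$ is needed.
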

  \begin{proof}
  Let $v$ be an arbitrary vertex in $S$. 
In view of Lemma \ref{lem3.1}, one has 
    \begin{align*}
        d_{G^*}(v)=d_{L\cup S''}(v)+d_{S'}(v)\leq(\gamma_\mathbb{H}+\max_{H\in \mathbb{H}}\{|H|\}-2)+d_{S'}(v)\leq C_\mathbb{H}+d_{S'}(v),
    \end{align*}
    the last inequality holds by the choice of $C_\mathbb{H}$. 
    Therefore,  
    \begin{align}\notag
    \sum_{w\in S'}\rho^*x_w&=\sum_{w\in S'}\sum_{u\in N_{G^*}(w)}x_u\leq \sum_{w\in S'}d_{G^*}(w)x_{u^*}\leq \sum_{w\in S'}\left(C_\mathbb{H}+d_{S'}(v)\right)x_{u^*}\\\label{eq:01}
        &=\left(C_\mathbb{H}|S'|+2e(S')\right)x_{u^*}\leq 3C_\mathbb{H}|S'|x_{u^*}\\\notag
        &\leq \frac{3\gamma_\mathbb{H}n}{100C_\mathbb{H}}x_{u^*},
    \end{align}
    where the inequality in  \eqref{eq:01} holds by Theorem~\ref{lem2.1} and the fact that $G^*[S']$ is $\mathbb{H}$-subdivision-free, and the last inequality holds by \eqref{eq:1}.  
    It follows that  
\begin{align}\label{eq:2}
        \rho^*x_v=\underset{u\in N_{G^*}(v)}{\sum}x_u=\underset{u\in N_{L\cup S''}(v)}{\sum}x_u+\underset{u\in N_{S'}(v)}{\sum}x_u\leq C_\mathbb{H}x_{u^*}+\frac{3\gamma_\mathbb{H}n}{100C_\mathbb{H}\rho^*}x_{u^*}.
    \end{align}
    Dividing both sides of \eqref{eq:2} by $\rho^*$, since ${\rho^*}\geq\sqrt{\gamma_\mathbb{H}(n-\gamma_\mathbb{H})}$ and $n$ is sufficiently large, we obtain $x_v\leq\frac{x_{u*}}{20C_\mathbb{H}}$, as desired.
  \end{proof}
The subsequent lemma guarantees that $G^*[S'']$ contains a long path. 
\begin{lemma}\label{lem3.3}
If $S'\neq \emptyset$, then %{\red{for each $H\in \mathbb{H}$,} 
$G^*[S'']$ contains a path of order at least $\max_{H\in \mathbb{H}}\{2|H|^2\}$.%{\red{choice of $C_{\mathbb{H}}$}}
\end{lemma}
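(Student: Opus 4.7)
The plan is to argue by contradiction: assume $G^*[S'']$ contains no path of order $\ell := \max_{H \in \mathbb{H}} 2|H|^2$, and use a vertex $v \in S'$ (which exists by hypothesis) to construct a graph $G'$ with $\rho(G') > \rho(G^*)$ that remains $\mathbb{H}$-subdivision-free, contradicting $G^* \in \mathrm{SPEX}(n, \mathbb{H}_{\rm sub})$. The construction is a local edge swap at $v$ that promotes $v$ into the role of an $S''$-vertex.

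First, I would select $v \in S'$ with $d_S(v)$ bounded by a constant depending only on $\mathbb{H}$. Applying Theorem~\ref{lem2.1} to the $\mathbb{H}$-subdivision-free subgraph $G^*[S']$ yields $e(G^*[S']) \leq 100\, e(H^*)|S'|$; combined with $e(S',S'') \leq (\max_{H\in\mathbb{H}} |H|-2)|S'|$ from Lemma~\ref{lem3.1}, averaging shows that some $v \in S'$ has $d_S(v) \leq D_\mathbb{H}$ for a suitable constant. I then define
\[
G' := G^* - \{vu : u \in N_S(v)\} + \{vw : w \in L \setminus N(v)\},
\]
so $N_{G'}(v) = L$. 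The spectral gain is
\[
\mathbf{x}^T A(G') \mathbf{x} - \mathbf{x}^T A(G^*) \mathbf{x} = 2 x_v \Bigl( \sum_{w \in L \setminus N(v)} x_w - \sum_{u \in N_S(v)} x_u \Bigr),
\]
which is positive: by Lemma~\ref{lem2.4} each $x_w$ with $w \in L$ is nearly $x_{u^*}$ and $|L\setminus N(v)| \geq 1$ (since $v \in S'$), while by Lemma~\ref{lem3.2} each $x_u$ with $u \in S$ satisfies $x_u \leq x_{u^*}/(20 C_\mathbb{H})$, with at most $D_\mathbb{H}$ such terms. The Rayleigh principle then yields $\rho(G') > \rho(G^*)$.

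The main obstacle is to verify that $G'$ remains $\mathbb{H}$-subdivision-free, which is where the no-long-path hypothesis is crucial. Suppose $H_S$ is a minimum $H$-subdivision in $G'$ for some $H \in \mathbb{H}$. Since $G^*$ is subdivision-free and only the $v$--$L$ edges are new, $v \in V(H_S)$ and all neighbors of $v$ in $H_S$ lie in $L$. I would then pick $u^* \in S'' \setminus V(H_S)$ and swap $v$ for $u^*$: because $u^*$ is adjacent in $G^*$ to every vertex of $L \supseteq N_{H_S}(v)$, the resulting structure is an $H$-subdivision in $G^*$, contradicting $G^*$'s subdivision-freeness. To ensure such $u^*$ exists, I decompose each path $P^j$ of $H_S$ into maximal $S''$-runs separated by vertices in $L \cup S'$; each $S''$-run lies in $G^*[S'']$ (no non-$G^*$ edge in $G'$ is between two $S''$-vertices) and hence has order $<\ell$ by hypothesis. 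A careful accounting---using $|V(P^j) \cap L| \leq \gamma_\mathbb{H}$ and bounding $|V(P^j) \cap S'|$ via the minimality of $H_S$ together with the sparsity of $G^*[S']$ from Theorem~\ref{lem2.1}---shows $|V(H_S)|$ is a constant depending only on $\mathbb{H}$, far less than $|S''| \geq n - O(n/C_\mathbb{H}^2)$ from~\eqref{eq:1}. Hence $u^*$ exists and the contradiction is obtained. The hardest step is this final count, particularly controlling the $S'$-vertices on each $P^j$, which requires the minimality of the chosen subdivision.
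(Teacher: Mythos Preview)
Your overall strategy---promote a single $v\in S'$ to an $S''$-type vertex and argue by contradiction---is sound and genuinely different from the paper's route, but the final counting step has a real gap. The assertion that $|V(H_S)|$ is a constant depending only on $\mathbb{H}$ is false: nothing prevents a path $P^j$ of $H_S$ from running through a long induced path inside $G^*[S'\setminus\{v\}]$, and the sparsity bound $e(G^*[S'])=O(|S'|)$ from Theorem~\ref{lem2.1} controls only average degree, not path lengths. Minimality of $H_S$ merely forces each $P^j$ to be an induced path in $G'$, and induced paths in sparse graphs can have linear length. What does go through is the weaker statement you actually need, namely $|V(H_S)\cap S''|<|S''|$: since $G'[S'']=G^*[S'']$, each maximal $S''$-segment of any $P^j$ has fewer than $\ell$ vertices by hypothesis, and the total number of such segments over all $P^j$ is at most $3m+\gamma_\mathbb{H}+|S'|$; hence $|V(H_S)\cap S''|<\ell\bigl(3m+\gamma_\mathbb{H}+|S'|\bigr)$. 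Using $|S'|\le \gamma_\mathbb{H} n/(10C_\mathbb{H})^2$ from~\eqref{eq:1} and $\ell<C_\mathbb{H}$ gives $|V(H_S)\cap S''|=o(n)<|S''|$, and then your swap yields an $H$-subdivision in $G^*$. You should also treat $x_v=0$ separately, since the Rayleigh inequality is then not strict; in that case the new $v$--$L$ edges strictly enlarge the component of maximal spectral radius, so $\rho(G')>\rho^*$ still holds.

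By contrast, the paper argues directly rather than by contradiction and modifies all of $S'$ at once: it sets $G_0:=G^*-\{uw:u\in S',\,w\in V(G^*)\}+\{uw:u\in S',\,w\in L\}$, so that in $G_0$ every vertex of $S$ is adjacent to all of $L$. A minimal $H$-subdivision $H_S$ in $G_0$, chosen to maximise $|V(H_S)\cap S''|$, then necessarily satisfies $S''\subseteq V(H_S)$, and each $P^i-L$ has at most three components. Pigeonhole over these at most $3\binom{|H|}{2}$ pieces places at least $|S''|/\bigl(3\binom{|H|}{2}\bigr)$ vertices of $S''$ on a single subpath of $G^*[S'']$, producing the long path outright and avoiding any bookkeeping over $S'$-segments.
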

  \begin{proof}    
Let $G_0:=G^*-\{uv:u\in S',\, v\in V(G^*)\}+\{uv:u\in S',\,v\in L\}$.
Hence,
    \begin{align}\label{eq:3}
        \rho(G_0)-\rho^*\geq {\bf x}^T\left(A(G_0)-A(G^*)\right){\bf x}\geq\sum_{u\in S'}x_u\left(2\sum_{v\in L}{x_v}-2\sum_{v\in N_{L\cup S''}(u)}x_v-\sum_{v\in N_{S'}(u)}x_v\right).
    \end{align}
Recall that $|L|=\gamma_\mathbb{H}$ and $x_v\geq \left(1-\frac{1}{2(10C_\mathbb{H})^2}\right)x_{u^*}$ for each $v\in L$. Therefore,  
    \begin{align}\label{eq3.1}
        \sum_{v\in L}x_v\geq \gamma_\mathbb{H}\left(1-\frac{1}{2(10C_\mathbb{H})^2}\right)x_{u^*}\geq\left(\gamma_\mathbb{H}-\frac{1}{10}\right)x_{u^*}.
    \end{align}
Furthermore, for each $u\in S'$, we have $d_L(u)\leq|L|-1=\gamma_\mathbb{H}-1$ based on the definition of $S'$, and $d_{S''}(u)\leq \max_{H\in \mathbb{H}}\{|H|\}$ by Lemma \ref{lem3.1}. Together with Lemma \ref{lem3.2}, one has  
\begin{align}\label{eq3.2}
\sum_{v\in N_{L\cup S''}(u)}x_v\leq d_L(u)x_{u^*}+d_{S''}(u)\frac{x_{u^*}}{20C_\mathbb{H}}\leq\left(\gamma_\mathbb{H}-1\right)x_{u^*}+\frac{\max_{H\in \mathbb{H}}\{|H|\}}{20C_\mathbb{H}}x_{u^*}.
\end{align}
%for each $H\in\mathbb{H}$
    In what follows, we estimate the value of $\sum_{u\in S'}x_u\sum_{v\in N_{S'}(u)}x_v$. We first define $S_i'$ for each $i\in [|S'|]$ iteratively. Let $S_1':=S'$. 
For each $i\geq 2$, choose $u_{i-1}\in S_{i-1}'$ such that $d_{S_{i-1}'}(u_{i-1})=\delta(G^*[S_{i-1}'])$ and let $S_i':=S_{i-1}'\setminus \{u_{i-1}\}$. Notice that $G^*[S_i']$ is $\mathbb{H}$-subdivision-free. Together with Theorem~\ref{lem2.1}, for each $i\in [|S'|]$ one has 
$$
d_{S_i'}(u_i)\leq \frac{2e(S_i')}{|S_i'|}\leq 2C_\mathbb{H}.
$$ 
    Combining Lemma \ref{lem3.2} gives that \allowdisplaybreaks
    \begin{align}\label{eq3.3}
\sum_{u\in S'}x_u\sum_{v\in N_{S'}(u)}x_v=\sum_{i=1}^{|S'|}2x_{u_i}\sum_{v\in N_{S_i'} (u_i)}x_v\leq %\sum_{v\in N_{S_i'} (u_i)}x_v\leq  
\sum_{i=1}^{|S'|}2x_{u_i}\left(d_{S_i'}(u_i)\frac{x_{u^*}}{20C_\mathbb{H}}\right)%\leq \sum_{i=1}^{|S'|}2x_{u_i}2C_\mathbb{H}\frac{x_{u^*}}{20C_\mathbb{H}}=
\leq \sum_{i=1}^{|S'|}2x_{u_i}\cdot\frac{x_{u^*}}{10}. 
    \end{align}
%Notice that $\sum_{u\in S'}x_u\sum_{v\in N_{S'}(u)}x_v=\sum_{i=1}^{|S'|}2x_{u_i}\sum_{v\in N_{S_i'} (u_i)}x_v$. 
Combining \eqref{eq:3}-\eqref{eq3.3} yields that $\rho(G_0)>\rho^*$. 
    
    Based on the choice of $G^*$, we know that $G_0$ contains an $H$-subdivision for some $H\in\mathbb{H}$. Choose a minimal $H$-subdivision in $G_0$, say $H_S$, such that it contains as many vertices as possible in $S''$. Assume that $(P^1,\ldots,P^{m})$ is an $H$-partition of $H_S$ in $G_0$, where $m=e(H)$. 
    
    We claim that $V(H_S)\cap S'\neq\emptyset$ and $S''\subseteq V(H_S)$.
    In fact, if $V(H_S)\cap S'=\emptyset$, it infers that $G^*$ contains an $H_0$-subdivision, a contradiction. 
    In addition, if there exists a vertex $v\in S''\setminus V(H_S)$, then we can use $v$ to replace a vertex $u\in V(H_S)\cap S'$ (since $V(H_S)\cap S'\neq\emptyset$ and $N_{G_0}(u)\subseteq N_{G_0}(v)$), which contradicts the choice of $H_S$. 

    %Bear in mind 
    Recall that %$(P^1,\ldots,P^m)$ is a minimum 
    $H_S$ is a minimal $H$-subdivision in 
$G_0$ and each vertex in $L$ is adjacent to all vertices in $S$ (in $G_0$). Hence, for each $i\in [m]$, %{\red{check the definition of $P^i$}}
    \begin{itemize}
        \item if the two endpoints of $P^i$ are in $L$, then  $|V(P^i)\cap S|\in \{0,1\}$;
        \item if the two endpoints  of $P^i$ are in $S$, then it contains at most two vertices in $L$;
        \item if one the endpoints  of $P^i$ lies  in $S$ and the other in $L$, then $|V(P^i)|=2$. 
    \end{itemize}
Therefore, each $P^i-L$ has at most three connected components.  
    Together with $S''\subseteq V(H_S)$ and the fact that $n$ is sufficiently large, there exists a path of order at least $\frac{|S''|}{3\binom{|H|}{2}}\geq \max_{H\in \mathbb{H}}\{2|H|^2\}$ in $G^*[S'']$, as desired.
  \end{proof}

We will maintain the notation of $S_i'$ defined in the proof of Lemma \ref{lem3.3}, that is, we assume  $S':=\{u_1,\ldots,u_{|S'|}\}$,  $S_1':=S'$ and $S_i':=S_{i-1}'\setminus \{u_{i-1}\}$ for every $i\in [2,|S'|]$. Moreover, $d_{S_{i-1}'}(u_{i-1})=\delta(G^*[S_{i-1}'])\leq 2C_{\mathbb{H}}$ for each $i\in [|S'|]$. 
Now, we are to prove  $S'=\emptyset$. 
\begin{lemma}\label{lem3.6}
    $S'=\emptyset$. 
\end{lemma}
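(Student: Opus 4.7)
Suppose for contradiction that $S'\neq\emptyset$. The plan follows the overview: apply two graph transformations to produce an auxiliary graph $G$ with $\rho(G)>\rho^{*}$ in which the long path of Lemma~\ref{lem3.3} appears in ``linear'' form, and then use the extremality of $G^{*}$ to force an $H$-subdivision in $G$ whose $H$-partition cannot cover the path.

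The first transformation is the one already analysed in the proof of Lemma~\ref{lem3.3}: set
$$
G_0 := G^{*} - \{uv : u\in S',\, v\in V(G^{*})\} + \{uv : u\in S',\, v\in L\},
$$
so $\rho(G_0) > \rho^{*}$. Lemma~\ref{lem3.3} also supplies a path $P^0 = w_1w_2\cdots w_\ell$ in $G_0[S''] = G^{*}[S'']$ with $\ell \geq \max_{H\in\mathbb{H}}\{2|H|^{2}\}$. The second transformation detaches every interior vertex $w_i$ ($2\leq i\leq \ell-1$) of $P^0$ from all of $S \setminus \{w_{i-1},w_{i+1}\}$; call the resulting graph $G$. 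By Lemma~\ref{lem3.1} at most $d_{S''}(w_i) - 2 \leq \alpha_\mathbb{H}$ edges are deleted at each $w_i$, and by Lemma~\ref{lem3.2} each such deletion $w_iz$ costs at most $2 x_{w_i} x_z \leq 2 x_{u^{*}}^{2} / (20C_\mathbb{H})^{2}$ in the quadratic form ${\bf x}^{T}A(\cdot){\bf x}$. Combining this with the positive surplus $\rho(G_0) - \rho^{*}$ furnished by the estimates \eqref{eq:3}--\eqref{eq3.3} yields $\rho(G) > \rho^{*}$. In $G$ every interior vertex of $P^0$ has neighborhood exactly $L \cup \{w_{i-1}, w_{i+1}\}$, so $P^0$ is what we call a \emph{linear} path.

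Since $G^{*}$ maximizes $\rho$ among $\mathbb{H}$-subdivision-free graphs on $n$ vertices and $\rho(G) > \rho^{*}$, $G$ contains an $H$-subdivision for some $H\in\mathbb{H}$. Choose a minimal such subdivision $H_S$ that additionally maximizes $|V(H_S) \cap S''|$. The swap argument at the end of the proof of Lemma~\ref{lem3.3} --- if $V(H_S)\cap S' = \emptyset$ then $H_S \subseteq G^{*}$, contradiction; otherwise any $u\in V(H_S)\cap S'$ and any $v\in S''\setminus V(H_S)$ satisfy $N_G(u) = L \subseteq N_G(v)$, so swapping $u$ with $v$ contradicts the maximality of $|V(H_S) \cap S''|$ --- forces $S''\subseteq V(H_S)$, and in particular $V(P^0) \subseteq V(H_S)$. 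Let $(P^1,\ldots,P^m)$ with $m = e(H)$ be an $H$-partition of $H_S$. The linearity of $P^0$ together with the minimality of $H_S$ forces $|V(P^j) \cap V(P^0)| \leq 2$ for each $j\in[m]$: any longer excursion of $P^j$ into the interior of $P^0$ could be shortcut through a vertex of $L$ not already used by $H_S$ (such a vertex exists since any visit using $\geq 3$ interior vertices of $P^0$ occupies $\geq 2$ extra vertices that can be traded for a single $L$-vertex), contradicting minimality. Consequently
$$
|V(P^0)| \;\leq\; \sum_{j=1}^{m} |V(P^j)\cap V(P^0)| \;\leq\; 2m \;\leq\; 2\binom{|H|}{2} \;<\; |H|^{2},
$$
which contradicts $|V(P^0)| \geq 2|H|^{2}$. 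The principal obstacle is the spectral verification in the second transformation: proving $\rho(G) > \rho^{*}$ requires balancing the surplus from moving $S'$-edges to $L$ against the loss from detaching interior $P^0$-vertices, via careful eigenvector bookkeeping in the style of \eqref{eq:3}--\eqref{eq3.3}.
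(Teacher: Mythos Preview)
Your overall architecture matches the paper's, but the ``spectral verification in the second transformation'' is not a matter of more careful bookkeeping in the style of \eqref{eq:3}--\eqref{eq3.3}; the single-eigenvector method genuinely fails here. The surplus you invoke, namely ${\bf x}^{T}(A(G_0)-A(G^{*})){\bf x}$, is bounded below only by a quantity of the form $c\sum_{u\in S'}x_u\,x_{u^{*}}$, and there is no lower bound on the entries $x_u$ for $u\in S'$ (such a vertex may have no neighbour in $L\cup S''$, in which case $x_u$ can be arbitrarily small or even zero). The loss from detaching the interior of $P^0$, on the other hand, is a fixed positive multiple of $x_{u^{*}}^{2}/(20C_{\mathbb H})^{2}$. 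So ${\bf x}^{T}(A(G)-A(G^{*})){\bf x}$ can be negative, and you cannot conclude $\rho(G)>\rho^{*}$ this way.

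The paper gets around exactly this obstruction by a more delicate two-step construction combined with the \emph{double}-eigenvector identity ${\bf y}^{T}{\bf z}\,(\rho(G_2)-\rho(G_1))={\bf y}^{T}(A(G_2)-A(G_1)){\bf z}$. First it forms $G_1$ like your $G_0$ but deliberately omits one edge $u_kv_{j_k}$ between $S'$ and $L$ (with $x_{u_k}=\min_i x_{u_i}$); then $G_2$ restores that edge while simultaneously stripping the extra $S''$-edges off $P^0$. The point is that in the Perron vector ${\bf z}$ of $G_2$ the vertex $u_k$ is adjacent to all of $L$, so $\rho(G_2)z_{u_k}=\sum_{v\in L}z_v$ gives a genuine lower bound $z_{S''}^{*}\le 2z_{u_k}$; this is what makes the gain from the added edge $u_kv_{j_k}$ dominate the loss from the detached $P^0$-edges. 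When $e_{G^*}(L,S')\ne |L||S'|-1$ the paper first shows $\rho(G_1)>\rho^{*}$ (Claim~\ref{lem3.4}) and then $\rho(G_2)>\rho(G_1)$ via ${\bf y},{\bf z}$; in the remaining boundary case $|S'|=1$, $d_L(u_1)=\gamma_{\mathbb H}-1$ it compares $G_2$ directly to $G^{*}$ using ${\bf x}$ and ${\bf z}$. None of this can be replaced by ${\bf x}$-only estimates.
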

\begin{proof}
    Suppose that $S'\neq \emptyset$.  
For each $i\in [\gamma_{\mathbb{H}}]$, denote $L_i:=\{v\in L:u_iv\notin E(G^*)\}$.
Assume, without loss of generality, that $x_{u_k}=\min_{i\in [|S'|]}x_{u_i}$ and choose $v_{j_k}\in L_k$. 
Let $G_1:=G^*-\{uw:u\in S',w\in S\}+\{u_iv:i\in [|S'|],v\in L_i\}-\{u_kv_{j_k}\}$  (see Figure \ref{fig:1}). The following claim  compares  the spectral radii of $G_1$ and $G^*$.

\begin{figure}[ht!]
    \centering
    \begin{tikzpicture}[
    set/.style={ellipse, draw, minimum width=1.5cm, minimum height=1cm, opacity=0.7},
    node distance=1cm,
    point/.style={circle, fill=black, inner sep=1pt},
    label/.style={font=\small}
]

% Upper ellipse (L) - smaller and de-emphasized , fill=blue!10
\node[set] (L) at (0,2) {$L$};

\node[point, label=left:$v_{j_k}$] (vjk) at (-0.65,1.8) {};
\node[right=-0.15cm of vjk, align=center] {$v_{j_k}$};
% Lower left ellipse (S') - smaller and de-emphasized
\node[set] (S1) at (-1.2,0) {$S'$};
\node[point, label=left:$u_k$] (uk) at (-1.5,0.35) {};
\node[right=-0.13cm of uk, align=center] {$u_k$};
% Lower right ellipse (S'') - smaller and de-emphasized

 \node[set, minimum width=2.2cm, minimum height=1.3cm] (S2) at (1.5,0) {};
\node[above=0cm of S2, align=center] {$S''$};
%\node[set] (S2) at (1.5,0) {$S''$};

\node[point] (A1) at (0.5,0) {};
\node[point] (A2) at (1.0,0) {};
\node[point] (A3) at (1.5,0) {};
\node[point] (A4) at (2.5,0) {};
\draw[dashed, black!70,thick] (A3.east) -- (A4.west);
% 绘制四面体
\draw (A1) -- (A2)-- (A3);
\node[point] (A5) at (0.9,-0.25) {};
\node[point] (A6) at (1.1,-0.25) {};
\node[point] (A7) at (1.4,-0.25) {};
\node[point] (A8) at (1.6,-0.25) {};
\draw (A5) -- (A2)-- (A6);
\draw (A7) -- (A3)-- (A8);
\node at (1.5,0.3) {$P^0$};

% De-emphasized connections between L and S''
\foreach \i in {1,...,2} {
    \draw[black!70, ultra thick] (L.south east) -- (S2.north west);
    %\draw[black!50] (L.south) -- (S2.north);
}

% De-emphasized missing edges between L and S'
\foreach \i in {1,...,2} {
    \draw[dashed, black!50, ultra thick] (L.south west) -- (S1.north east);
    %\draw[dashed, black!50] (L.south) -- (S1.north);
}

% Highlighted missing edge between marked points
\draw[dashed, red, thick] (vjk) to node[label, sloped, above] {\footnotesize missing} (uk);

% Optional de-emphasized edges between S' and S''
\draw[dashed, black!50] (S1.east) -- (S2.west);
\node at (0,-1) {$G^*$};
%%%%%%%%%%%%%%%%%%

\node[set] (L) at (5,2) {$L$};
\node[point, label=left:$v_{j_k}$] (vjk) at (4.35,1.8) {};
\node[right=-0.15cm of vjk, align=center] {$v_{j_k}$};
% Lower left ellipse (S') - smaller and de-emphasized
\node[set] (S1) at (3.8,0) {$S'$};
\node[point, label=left:$u_k$] (uk) at (3.5,0.35) {};
\node[right=-0.13cm of uk, align=center] {$u_k$};
% Lower right ellipse (S'') - smaller and de-emphasized
\node[set, minimum width=2.2cm, minimum height=1.3cm] (S2) at  (6.5,0) {};
\node[above=0cm of S2, align=center] {$S''$};
%\node[set] (S2) at (6.5,0) {$S''$};
\node[point] (A1) at (5.5,0) {};
\node[point] (A2) at (6.0,0) {};
\node[point] (A3) at (6.5,0) {};
\node[point] (A4) at (7.5,0) {};
\draw[dashed, black!70,thick] (A3.east) -- (A4.west);
% 绘制四面体
\draw (A1) -- (A2)-- (A3);
\node[point] (A5) at (5.9,-0.25) {};
\node[point] (A6) at (6.1,-0.25) {};
\node[point] (A7) at (6.4,-0.25) {};
\node[point] (A8) at (6.6,-0.25) {};
\draw (A5) -- (A2)-- (A6);
\draw (A7) -- (A3)-- (A8);
\node at (6.5,0.3) {$P^0$};

% De-emphasized connections between L and S''
\foreach \i in {1,...,2} {
    \draw[black!70, ultra thick] (L.south east) -- (S2.north west);
    %\draw[black!50] (L.south) -- (S2.north);
}

% De-emphasized missing edges between L and S'
\foreach \i in {1,...,2} {
    \draw[black!70, ultra thick] (L.south west) -- (S1.north east);
    %\draw[dashed, black!50] (L.south) -- (S1.north);
}

% Highlighted missing edge between marked points
\draw[dashed, red, thick] (vjk) to node[label, sloped, above] {\footnotesize missing} (uk);

% Optional de-emphasized edges between S' and S''
%\draw[dashed, black!50] (S1.east) -- (S2.west);
\node at (5,-1) {$G_1$};
%%%%%%%%%%%%%%%%%%%%%%%%%%

\node[set] (L) at (10,2) {$L$};
\node[point, label=left:$v_{j_k}$] (vjk) at (9.35,1.8) {};
\node[right=-0.15cm of vjk, align=center] {$v_{j_k}$};
% Lower left ellipse (S') - smaller and de-emphasized
\node[set] (S1) at (8.8,0) {$S'$};
\node[point, label=left:$u_k$] (uk) at (8.5,0.35) {};
\node[right=-0.13cm of uk, align=center] {$u_k$};
% Lower right ellipse (S'') - smaller and de-emphasized

\node[set, minimum width=2.2cm, minimum height=1.3cm] (S2) at (11.5,0) {};
\node[above=0cm of S2, align=center] {$S''$};

\node[point] (A1) at (10.5,0) {};
\node[point] (A2) at (11.0,0) {};
\node[point] (A3) at (11.5,0) {};
\node[point] (A4) at (12.5,0) {};
\draw[dashed, black!70,thick] (A3.east) -- (A4.west);
% 绘制四面体
\draw (A1) -- (A2)-- (A3);
% De-emphasized connections between L and S''
\foreach \i in {1,...,2} {
    \draw[black!70, ultra thick] (L.south east) -- (S2.north west);
    %\draw[black!50] (L.south) -- (S2.north);
}

% De-emphasized missing edges between L and S'
\foreach \i in {1,...,2} {
    \draw[black!70, ultra thick] (L.south west) -- (S1.north east);
    %\draw[dashed, black!50] (L.south) -- (S1.north);
}

% Highlighted missing edge between marked points
%\draw[black, thick] (vjk) to node[label, sloped, above]  (uk);
\draw[red](vjk) -- (uk);
\node at (10,-1) {$G_2$};
% Optional de-emphasized edges between S' and S''
%\draw[dashed, black!50] (S1.east) -- (S2.west);

\end{tikzpicture}
    \caption{Graph transformations.}
    \label{fig:1}
\end{figure}
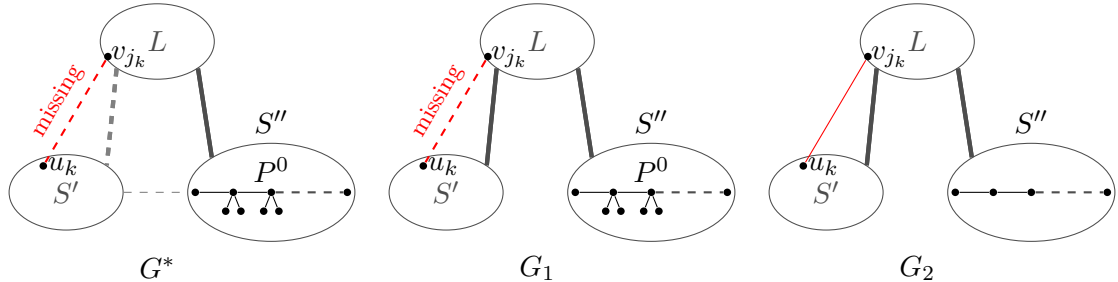
\begin{claim}\label{lem3.4}
If $e(L,S')\neq |L||S'|-1$, then $\rho(G_1)>\rho^*$.
\end{claim}
  \begin{proof}[\bf Proof of Claim \ref{lem3.4}]
Assume that $e(L,S')\neq |L||S'|-1$, that is,  $G^*[L,S']\neq G_1[L,S']$. Hence, either $|S'|\geq 2$ or $|S'|=1$ and $d_L(u_1)\leq\gamma_\mathbb{H}-2$.
Let $k'$ be an integer in $[|S'|]\setminus \{k\}$ if $|S'|\geq 2$, and let $k'=1$ otherwise. Recall that $x_v\geq (1-\frac{1}{(10C_\mathbb{H})^2})x_{u^*}\geq\frac{1}{2}x_{u^*}$ for each $v\in L$.  It is routine to check that \allowdisplaybreaks
    \begin{align*}\notag
        \rho(G_1)-\rho^* \geq& {\bf x}^T\left(A(G_1)-A(G^*)\right){\bf x}\\
        =&\sum_{i=1}^{|S'|}2x_{u_i}\left(\sum_{v\in L_i}x_v-\sum_{v\in N_{S'_{i}}(u_i)}x_v-\sum_{v\in N_{S''}(u_i)}x_v\right)-2x_{u_k}x_{v_{j_k}}\\
        \geq&\sum_{i\in [|S'|]\setminus \{k,k'\}}2x_{u_i}\left(\sum_{v\in L_i}x_v-\sum_{v\in N_{S'_{i}}(u_i)}x_v-\sum_{v\in N_{S''}(u_i)}x_v\right)\\
        &+2x_{u_{k'}}\left(\frac{1}{2}x_{u^*}-\sum_{v\in  N_{S'_{k'}}(u_{k'})}x_v-\sum_{v\in  N_{S''}(u_{k'})}x_v-\sum_{v\in N_{S'_{k}}(u_k)}x_v-\sum_{v\in N_{S''}(u_k)}x_v\right).
    \end{align*}

Together with Lemmas~\ref{lem3.1}-\ref{lem3.2}, for each $i\in [|S'|]\setminus \{k,k'\}$ one has 
\begin{align*}
    \sum_{v\in L_i}x_v-\sum_{v\in N_{S'_{i}}(u_i)}x_v-\sum_{v\in N_{S''}(u_i)}x_v
    \geq\frac{1}{2}x_{u^*}-2C_\mathbb{H}\frac{x_{u^*}}{20C_\mathbb{H}}-\max_{H\in \mathbb{H}}\{|H|\}\frac{x_{u^*}}{20C_\mathbb{H}}
    >0,
\end{align*}
and similarly 
\begin{align*}
   &\frac{1}{2}x_{u^*}-\sum_{v\in  N_{S'_{k'}}(u_{k'})}x_v-\sum_{v\in  N_{S''}(u_{k'})}x_v-\sum_{v\in N_{S'_{k}}(u_k)}x_v-\sum_{v\in N_{S''}(u_k)}x_v>0.%\\    \geq&\frac{1}{2}x_{u^*}-4C_H\frac{x_{u^*}}{20C_H}-2|H|\frac{x_{u^*}}{20C_H}    >0
\end{align*}
If $\max\{x_{u_i}:i\in [|S'|]\}>0$, then $\rho(G_1)>\rho^*$, as desired. If $x_{u_1}=\cdots=x_{u_{|S'|}}=0$, then each vertex in $S'$ is not adjacent to any vertex in $L\cup S''$, that is, $G^*[L\cup S'']$ is a connected component of $G^*$. Furthermore, $\rho(G^*[L\cup S''])=\rho^*$.  However, $G^*[L\cup S'']$ is a proper subgraph of some connected component of $G_1$. Hence, $\rho(G_1)>\rho^*$, as desired.
 \end{proof}

A \textit{linear path} in a graph $G$ is a path where every internal vertex along the path has degree exactly $2$ in $G$, while the endpoints may  coincide (forming a cycle). 
Denote $\zeta:=\max_{H\in \mathbb{H}}\{2|H|^2\}$. Based on Lemma~\ref{lem3.3}, we know that $S''$ contains a path $P$ of  order at least $\zeta$. % for some  $H\in\mathbb{H}$. 
Let $P^0=w_1w_2\ldots w_{\zeta}$ be a subpath of $P$. 
Notice that $P^0$ may not be a linear path.
Let $G_2$ be the graph obtained from $G_1+\{u_kv_{j_k}\}$ by deleting all edges within $S''$ that are incident to some vertices in $P^0$ but do not belong to $E(P^0)$  (see Figure \ref{fig:1}). 
Next, we compare the spectral radii of $G_2$ and $G^*$. 

\begin{claim}\label{lem3.5}
 $\rho(G_2)>\rho^*$. 
\end{claim}
\begin{proof}[\bf Proof of Claim \ref{lem3.5}]
    Let ${\bf y}=(y_1,y_2,\ldots,y_n)^{T}$ be a non-negative unit eigenvector of 
$A(G_1)$ with respect to $\rho(G_1)$, and let ${\bf z}=(z_1,z_2,\ldots, z_n)^T$ be a non-negative unit eigenvector of $A(G_2)$ with respect to $\rho(G_2)$. Let %$y_{u^*}:=\max_{w\in V}y_w$, 
    $y_{S''}^*:=\max_{w\in S''}y_w$,  $\sigma_L:=\sum_{u\in L}y_u$, and let $z_{S''}^*=\max_{u\in S''}z_u$. 
We proceed by  considering the following two cases.

{\bf Case 1.} $e_{G^*}(L,S')\neq |L||S'|-1$.
    
    In view of Claim~\ref{lem3.4}, we have $\rho(G_1)>\rho^*$, so it suffices to prove $\rho(G_2)>\rho(G_1)$. Notice that $\rho(G_1)y_{S''}^*\leq \alpha_{\mathbb{H}}y_{S''}^*+\sigma_L$, this implies 
    \begin{align}\label{ys}
        y_{S''}^*\leq \frac{\sigma_L}{\rho(G_1)-\alpha_{\mathbb{H}}}.
    \end{align}
        %\rho(G_1)y_w=\underset{u\in N_{G_1}(w)}{\sum}y_u=\underset{u\in N_{L\cup S''}(w)}{\sum}y_u+\underset{u\in N_{S'}(w)}{\sum}y_u\leq C_Hy_{u^*}+\frac{\gamma_Hn}{100C_H\rho(G_1)}y_{u^*}%\leq C_Hy_{u^*}+\frac{3\gamma_Hn}{100C_H\rho(G_1)}y_{u^*}.

By the construction of
$G_1$, we have $|E(G_1)|\leq |E(G^*)|+|L||S'|\leq C_\mathbb{H} n+\frac{\gamma_\mathbb{H}^2n}{(10C_\mathbb{H})^2}$. Together with the well-known fact that 
$\rho(G_1)\leq \sqrt{2|E(G_1)|}$, we have 
$$
\sqrt{2\left(C_\mathbb{H} n+\frac{\gamma_\mathbb{H}^2n}{(10C_\mathbb{H})^2}\right)}\rho(G_1)y_{v_{j_k}}\geq \rho(G_1)^2y_{v_{j_k}}\geq \sum_{u\in S\setminus \{u_k\}}\rho(G_1)y_u\geq (|S|-1)\sigma_L=(n-\gamma_\mathbb{H}-1)\sigma_L.
$$
It follows that 
\begin{align}\notag
    y_{v_{j_k}}&\geq \frac{(n-\gamma_\mathbb{H}-1)\sigma_L}{\rho(G_1)\sqrt{2\left(C_\mathbb{H} n+\frac{\gamma_\mathbb{H}^2n}{(10C_\mathbb{H})^2}\right)}}\\\notag
    &\geq \frac{\sigma_L}{\rho(G_1)-\alpha_{\mathbb{H}}}\cdot\frac{n-\gamma_\mathbb{H}-1}{2\sqrt{C_\mathbb{H} n+\frac{\gamma_\mathbb{H}^2n}{(10C_\mathbb{H})^2}}}\\\label{eq:4.1}
    &>\frac{\sigma_L\cdot 4\zeta\alpha_{\mathbb{H}}}{\rho(G_1)-\alpha_{\mathbb{H}}}\geq 4\zeta\alpha_{\mathbb{H}}y_{S''}^*.
\end{align}

    We first assume that $\gamma_\mathbb{H}\geq2$. Clearly, 
    \begin{align}\notag\label{eq:4}
        \rho(G_2)-\rho(G_1)&\geq{\bf y}^T\left(A(G_2)-A(G_1)\right){\bf y}\\&\geq 2y_{v_{j_k}}y_{u_k}-2\sum_{i=1}^{\zeta}\sum_{w\in N_{S''}(w_i)}y_{w_i}y_w.
    \end{align}
We claim that $\sigma_L\geq 2y_{v_{j_k}}$. In fact, if $y_{v_{j_k}}\leq y_{v_{\ell}}$ for some $\ell\neq j_k$, then $\sigma_L\geq 2y_{v_{j_k}}$. If $y_{v_{j_k}}> y_{v_{\ell}}$ for every  $\ell\neq j_k$, then $N_L(v_{j_k})\neq \emptyset$ and  
$$
\rho(G_1)\left(\sum_{v_\ell\in N_L(v_{j_k})}y_{v_\ell}-y_{v_{j_k}}\right)\geq |N_L(v_{j_k})|y_{v_{j_k}}-\sum_{v_\ell\in N_L(v_{j_k})}y_{v_\ell}>0.
$$
Therefore, $\sigma_L\geq 2y_{v_{j_k}}$, as desired. 
Notice that $\rho(G_1)y_{u_k}=\sigma_L-y_{v_{j_k}}$, % and  $\rho(G_1)y_{S''}^*\leq\sigma_L+\alpha_Hy_{S''}^*$ for each $H\in\mathbb{H}$, 
which combined with \eqref{ys}, implies that  
\begin{align*}
y_{S''}^*\leq\frac{\sigma_L}{\rho(G_1)-\alpha_\mathbb{H}}\leq\frac{2(\sigma_L-y_{v_{j_k}})}{0.5\rho(G_1)}=4y_{u_k}.
\end{align*}
   Therefore, %for each $H\in\mathbb{H}$,
    \begin{align*}
        \sum_{i=1}^{\zeta}\sum_{w\in N_{S''}(w_i)}y_{w_i}y_w\leq\zeta\alpha_\mathbb{H}\cdot y_{S''}^*\cdot4y_{u_k}=4\zeta\alpha_\mathbb{H}\cdot y_{S''}^*y_{u_k}.
    \end{align*}
Combining this with \eqref{eq:4.1} and \eqref{eq:4}, we get that $\rho(G_2)-\rho(G_1)>0$, as desired. 
    
    Next, we assume that $\gamma_\mathbb{H}=1$ and $v_1 (=v_{j_k})$ is the unique vertex in $L$.
%Recall that ${\bf z}=(z_1,z_2,\ldots,  z_n)^T$ is a non-negative unit eigenvector of $A(G_2)$ with respect to $\rho(G_2)$.
    Notice that $\rho(G_2)z_{S''}^*\leq z_{v_1}+\alpha_\mathbb{H}z_{S''}^*$ %for each $H\in\mathbb{H}$
    and $\rho(G_2)z_{u_k}=z_{v_1}$. 
    This yields $(\rho(G_2)-\alpha_\mathbb{H})z_{S''}^*\leq \rho(G_2)z_{u_k}$, % for each $H\in\mathbb{H}$ , 
    which implies $z_{S''}^*\leq 2z_{u_k}$. Combining this with \eqref{eq:4.1}, one has 
    %Using the same method in~\eqref{eq2}, we have $z_{S''}^*<\frac{z_{u^*}}{20C_H}$.
    %Thus,
    \begin{align*}
        {\bf y}^T{\bf z}(\rho(G_2)-\rho(G_1))&={\bf y}^T\left(A(G_2)-A(G_1)\right){\bf z}\\
        &=y_{u_k}z_{v_1}+y_{v_1}z_{u_k}-\sum_{i=1}^{\zeta}\sum_{w\in N_{S''}(w_i)}(y_{w_i}z_w+y_wz_{w_i})\\&\geq y_{v_1}z_{u_k}-2\zeta\cdot\alpha_\mathbb{H}\cdot y_{S''}^*z_{S''}^*\\&>0.%\geq y_{v_1}z_{u_k}\left(1-\frac{1}{20C_H}\cdot8|H|^2\alpha_H\right)\\&>0
    \end{align*}
That is, $\rho(G_2)>\rho(G_1)$, as desired. 

{\bf Case 2.} $e_{G^*}(L,S')=|L||S'|-1$.

In this case, $S'=\{u_1\}$ and $d_L(u_1)=\gamma_\mathbb{H}-1$. 
    %Recall that ${\bf x}=(x_1,x_2,\ldots, x_n)^T$ is a non-negative unit eigenvector of $G^*$ corresponding to $\rho^*$ and ${\bf z}=(z_1,z_2\ldots z_n)^T$ is a non-negative unit eigenvector of $A(G_2)$ with respect to $\rho(G_2)$. Define $z_{S''}^*:=\max_{w\in S''}z_w$. 
    Notice that $\rho(G_2)z_{u_1}=\sum_{w\in L}z_w$ and $\rho(G_2)z_{S''}^*\leq\sum_{w\in L}z_w+\alpha_\mathbb{H}z_{S''}^*$, % for each $H\in\mathbb{H}$, 
    which implies $z_{S''}^*\leq 2z_{u_1}$. Recall that $x_{v_{j_1}} \geq \left(1-\frac{1}{2(10C_\mathbb{H})^{2}}\right)x_{u^{*}}$.  Together with Lemmas~\ref{lem3.1}-\ref{lem3.2} and the choice of $C_\mathbb{H}$, one has
    \begin{align*}
        {\bf x}^T{\bf z}(\rho(G_2)-\rho^*)&={\bf x}^T(A(G_2)-A(G^*)){\bf z} \\&\geq x_{u_1}z_{v_{j_1}}+x_{v_{j_1}}z_{u_1}-\sum_{i=1}^{\zeta}\sum_{w\in N_{S''}(w_i)}(x_{w_i}z_w+x_wz_{w_i})-\sum_{w\in N_{S''}(u_1)}(x_{u_1}z_w+x_wz_{u_1})\\ &\geq x_{v_{j_1}}z_{u_1}-2\zeta\cdot \alpha_\mathbb{H}\cdot x_{S''}^*z_{S''}^*-\zeta \cdot x_{S''}^*z_{S''}^*\\ &\geq \frac{1}{2}x_u^*z_{u_1}-3\zeta\cdot  \alpha_\mathbb{H}\cdot\frac{x_u^*}{20C_\mathbb{H}}\cdot2z_{u_1}\\ &>0.
    \end{align*}
That is, $\rho(G_2)>\rho^*$, as desired. 
\end{proof}

Based on Claim~\ref{lem3.5} and the fact that $G^*\in {\rm SPEX}(n,\mathbb{H}_{\rm sub})$, we know that $G_2$ contains an $H$-subdivision for some $H\in\mathbb{H}$.
    Choose a minimal $H$-subdivision in $G_2$, say $H_S$, such that it contains as many vertices as possible in $S''$.
    Assume that $(P^1,\ldots,P^{m})$ is the corresponding $H$-partition of $H_S$, where $m=e(H)$. 
    
    Applying a similar discussion as Lemma \ref{lem3.3} yields that $S''\subseteq V(H_S)$. Recall that $P^0$ is a linear path inside $G_2$. Thus, $|V(P^i)\cap V(P^0)|\leq 2$ for every $i\in [m]$, which implies that at most $2m<2|H|^2\leq \zeta$ vertices in $P^0$ appear in $H_S$, a contradiction. 
    %We choose a minimal one $H_s$.     As $H_s$ is a minimal $H$-subdivision, it contains all vertices in $S''$, including $P_0$.     But since $P_0$ is a linear path, we know that two consecutive vertices on $P_0$ do not belong to the same $V_{E_i}$.     So $H_s$ only uses $|H|+\dbinom{|H|}{2}<2|H|^2$ vertices in $P_0$ at most, a contradiction.     This completes the proof that $S'=\emptyset$.
\end{proof}

In the following, we complete the proof of Theorem~\ref{thm1.3}.
\begin{proof}[\bf Proof of Theorem \ref{thm1.3}]
Based on Lemma~\ref{lem2.4} and  Lemma~\ref{lem3.6}, we know that $V(G^*)=L\cup S$ with $|L|=\gamma_\mathbb{H}$ and $G^*[L,S]=K_{\gamma_\mathbb{H},n-\gamma_\mathbb{H}}$. Hence, in order to prove that $G^*$ contains $B_{\gamma_\mathbb{H},n-\gamma_\mathbb{H}}$ as a spanning subgraph, it suffices to show $G^*[L]\cong K_{\gamma_\mathbb{H}}$.

    Suppose to the contrary that $G^*[L]$ is not a clique. Without loss of generality, assume that $v_1v_2\notin E(G^*)$. 
    Since $|S|=n-|L|>\max_{H\in \mathbb{H}}\{2|H|^2\}=:\zeta$, one may choose a subset $T\subseteq S$ with $|T|=\zeta$.
    Let $T':=S\setminus T$.
    By Lemma~\ref{lem3.1}, $d_{S}(w)\leq\alpha_\mathbb{H}$ for each $w\in S$.
    It follows that %for each $H\in\mathbb{H}$
    \begin{align*}
        e(S)-e(T')\leq \sum_{v\in T}d_{S''}(v)\leq \alpha_\mathbb{H}\zeta.
    \end{align*}
    Moreover, let $x_{S}^*:=\max_{w\in S}x_w$.
    Then $\rho^*x_{S}^*\leq\sum_{u\in L}x_u+\alpha_\mathbb{H}x_{S}^*$. Recall that $\rho^*\geq \sqrt{\gamma_\mathbb{H}(n-\gamma_\mathbb{H})}$.
    Hence,
    \begin{align*}
        x_{S}^*\leq\frac{\sum_{u\in L}x_u}{\rho^*-\alpha_\mathbb{H}}\leq\frac{\gamma_\mathbb{H}x_{u^*}}{\rho^*-\alpha_\mathbb{H}}\leq\sqrt{\frac{2\gamma_\mathbb{H}}{n}}x_{u^*}.
    \end{align*}

    Let $G$ be a graph obtained from $G^*+v_1v_2$ by removing all edges in $E(G^*[S])\setminus E(G^*[T'])$.
    By Lemma~\ref{lem2.4}, one has $\min\{x_{v_1},x_{v_2}\}\geq(1-\frac{1}{2(10C_\mathbb{H})^2})x_{u^*}$. Thus, as $n$ is sufficiently large, one has 
    \begin{align*}
        \rho(G)-\rho^*&\geq {\bf x}^T\left(A(G)-A(G^*)\right){\bf x}\\
        &\geq 2\left(x_{v_1}x_{v_2}-\sum_{w_1w_2\in E(G^*[S])\setminus E(G^*[T'])}x_{w_1}x_{w_2}\right)\\&\geq 2\left(\left(1-\frac{1}{2(10C_\mathbb{H})^2}\right)^2-\frac{\alpha_\mathbb{H}\zeta\cdot 2\gamma_\mathbb{H}}{n}\right)x_{u^*}^2\\&>0, 
    \end{align*}
    which implies that $\rho(G)>\rho^*$. Therefore, $G$ has an $H$-subdivision for some $H\in\mathbb{H}$, containing the edge $v_1v_2$. Choose a minimum  $H$-subdivision in $G$, say $H_S$. Assume that $(P^1,\ldots,P^{m})$ is the corresponding $H$-partition of $H_S$, where $m=e(H)$.

   Notice that $G[T]$ is an empty graph. By the minimality of $H_S$, we know that $|V(P^i)\cap T|\leq 2$ for each $i\in [m]$. Hence at least one vertex, say $w\in S$, does not appear in $H_S$. Consequently, $H_{S}$ is also an $H$-subdivision of $G-w$.
    Now, replace the edge $v_1v_2$ in $H_S$ with a three-vertex path $v_1wv_2$, which also forms an $H$-subdivision. It follows that $G^*$ contains an $H$-subdivision, a contradiction.
    
    This completes the proof of Theorem~\ref{thm1.3}.
\end{proof}

\section{Proof of Lemma \ref{lem2.4}}\label{sec4}

In this section, we give the proof of Lemma \ref{lem2.4}. Let $n$ be a sufficiently large integer and $\mathbb{H}$ be a family of graphs with $\gamma_{\mathbb{H}}\geq 1$.  Assume that $G$ is an $n$-vertex $\mathbb{H}$-subdivision-free graph with $\rho(G) \geq \sqrt{\gamma_\mathbb{H}(n-\gamma_\mathbb{H})}$.  Let ${\bf x}$ be a non-negative unit eigenvector of $A(G)$ corresponding to $\rho(G)$ and  $u^{*} \in V(G)$ be a vertex satisfying $x_{u^{*}}=\max_{u \in V(G)}x_{u}$.
To prove Lemma~\ref{lem2.4}, our goal is to find a set $L$ of exactly $\gamma_\mathbb{H}$ vertices in $G$ such that $x_{u} \geq (1-\frac{1}{2(10C_\mathbb{H})^{2}})x_{u^{*}}$ and $d_{G}(u) \geq (1-\frac{1}{(10C_\mathbb{H})^{2}})n$ for every $u \in L$.
For a positive integer $\lambda$,  define %We first define a subset of $V(G)$ as follows:
$$
L^\lambda=\{u\in V(G):x_{u}\geq (10C_\mathbb{H})^{-\lambda}x_{u^*}\}. 
$$
We adhere to the high-level methodological framework outlined in \cite{byrne2024general}:
\begin{enumerate}
%    \item Show that $|L^{\lambda}|$ is not too large for every integer $\lambda$;
    \item Show that $|L^4|$ is in fact bounded;
    \item Relate the eigenweight of vertices in $L^4$ to their degrees;
    \item Show that eigenweights of vertices in $L^1$  are close to $1$;
    \item Show that $|L^1|=k$.
\end{enumerate}
%where $\lambda$ is a positive integer.

For a vertex $u\in V(G)$ and a positive integer $i$, let $N_i(u)$ be the set of vertices at distance $i$ from $u$ in $G$.
We will frequently use $N_1(u)$ and $N_2(u)$ below.
Furthermore, we use $L_i^\lambda(u)$ and $\overline{L}_i^\lambda(u)$ to denote $N_i(u)\cap L^\lambda$ and $N_i(u)\setminus L^\lambda$, respectively.
When the reference vertex $u$  is clear from context, we shall omit it and simply write $L_i^\lambda$ and $\overline{L}_i^\lambda$ respectively. 
%When there is no ambiguity, we use $L_i^\lambda$ and $\overline{L}_i^\lambda$ to represent $L_i^\lambda(u)$ and $\overline{L}_i^\lambda(u)$, respectively.
We also denote $L_{i,j}^\lambda=L_i^\lambda \cup L_j^\lambda$ and $\overline{L}_{i,j}^\lambda=\overline{L}_i^\lambda \cup \overline{L}_j^\lambda$ for simplicity.

\begin{proof}[\bf Proof of Lemma \ref{lem2.4}]
%We now establish some claims on $L^{\lambda}$.
With the above notation established, we are to find a set $L$ of exactly $\gamma_\mathbb{H}$ vertices in $G$ such that $x_{u} \geq (1-\frac{1}{2(10C_\mathbb{H})^{2}})x_{u^{*}}$ and $d_{G}(u) \geq (1-\frac{1}{(10C_\mathbb{H})^{2}})n$ for every $u \in L$. We first show an inequality for $\gamma_{\mathbb{H}}^2$.

\begin{claim}\label{lem4.2}
    For every vertex \( u \in V(G) \) and every positive integer \( \lambda \), we have 
    \begin{align}\label{eq:5}
        \gamma_\mathbb{H}(n-\gamma_\mathbb{H})x_u \leq |N_1(u)|x_u + \left( \frac{2C_\mathbb{H}n}{(10C_\mathbb{H})^{10-\lambda}} + \frac{2C_\mathbb{H}n}{(10C_\mathbb{H})^\lambda} \right)x_{u^*} + \sum_{\substack{v \in \overline{L}_1^\lambda,\,w \in N_{L_{1,2}^\lambda}(v)}} x_w.
    \end{align}
\end{claim}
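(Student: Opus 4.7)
My plan is to derive the inequality by applying the eigenvector equation twice and then carefully partitioning the resulting walks of length two. Starting from $\rho(G)^{2} x_{u} = \sum_{v \in N_{1}(u)} \sum_{w \in N_{1}(v)} x_{w}$ and using $\rho(G)^{2} \geq \gamma_{\mathbb{H}}(n-\gamma_{\mathbb{H}})$, I would first pull out the walks of length two that return to $u$: since for each $v \in N_{1}(u)$ the vertex $u$ itself lies in $N_{1}(v)$, these contribute exactly $|N_{1}(u)|x_{u}$. The remaining task is to bound
\[
S := \sum_{v \in N_{1}(u)} \sum_{w \in N_{1}(v) \setminus \{u\}} x_{w}
\]
by the two error terms plus the final sum of the claim.

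I would then split the pairs $(v,w)$ counted by $S$ into three groups according to whether $v \in L_{1}^{\lambda}$ and whether $w \in L_{1,2}^{\lambda}$, noting that since $v \in N_{1}(u)$ and $w \in N_{1}(v) \setminus \{u\}$, we automatically have $w \in (N_{1}(u) \cup N_{2}(u)) \setminus \{u\}$ and hence $w \in L_{1,2}^{\lambda} \cup \overline{L}_{1,2}^{\lambda}$. Group~(a) consists of pairs with $w \in \overline{L}_{1,2}^{\lambda}$: since $x_{w} < (10C_{\mathbb{H}})^{-\lambda}x_{u^{*}}$ and the total count of such walks is $\sum_{v \in N_{1}(u)} d(v) \leq 2e(G) \leq 2C_{\mathbb{H}}n$ by Theorem~\ref{lem2.1}, the contribution is at most $\frac{2C_{\mathbb{H}}n}{(10C_{\mathbb{H}})^{\lambda}} x_{u^{*}}$, matching the second error term. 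Group~(c) consists of pairs with $v \in \overline{L}_{1}^{\lambda}$ and $w \in L_{1,2}^{\lambda}$, which is exactly the third summation in the claim.

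The main obstacle is group~(b), where $v \in L_{1}^{\lambda}$ and $w \in L_{1,2}^{\lambda}$: here both endpoints of the edge $vw$ lie in $L^{\lambda}$, so every such edge sits inside $G[L^{\lambda}]$. To absorb this contribution into the first error term I would first establish the auxiliary bound $|L^{\lambda}| \leq 2C_{\mathbb{H}}n(10C_{\mathbb{H}})^{\lambda}/\rho(G)$, which follows from the fact that $\rho(G) x_{v} \leq d(v) x_{u^{*}}$ forces every $v \in L^{\lambda}$ to have $d(v) \geq \rho(G)/(10C_{\mathbb{H}})^{\lambda}$, summed against $\sum_{v \in L^{\lambda}} d(v) \leq 2 e(G) \leq 2 C_{\mathbb{H}} n$. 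Applying Theorem~\ref{lem2.1} to $G[L^{\lambda}]$ (after discarding isolated vertices) gives $e(G[L^{\lambda}]) \leq C_{\mathbb{H}}|L^{\lambda}|$, so group~(b) contributes at most
\[
2 e(G[L^{\lambda}]) x_{u^{*}} \leq \frac{4 C_{\mathbb{H}}^{2} n (10 C_{\mathbb{H}})^{\lambda}}{\rho(G)} x_{u^{*}} \leq \frac{2 C_{\mathbb{H}} n}{(10 C_{\mathbb{H}})^{10 - \lambda}} x_{u^{*}},
\]
where the final inequality uses $\rho(G) \geq \sqrt{\gamma_{\mathbb{H}}(n-\gamma_{\mathbb{H}})} \geq 2 C_{\mathbb{H}} (10 C_{\mathbb{H}})^{10}$ for $n$ sufficiently large. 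Summing the three contributions yields the claim. The delicate point is that the exponents $\lambda$ and $10-\lambda$ are forced to multiply to $(10C_{\mathbb{H}})^{10}$ in this last calculation, which is precisely why the spectral lower bound $\rho(G) \gtrsim \sqrt{n}$ must be invoked to close the argument.
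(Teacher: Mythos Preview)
Your proposal is correct and follows essentially the same approach as the paper. Both arguments start from $\rho(G)^2 x_u$, separate the returning walks to get $|N_1(u)|x_u$, bound the walks ending at a low-weight vertex $w\in\overline{L}_{1,2}^\lambda$ via the global edge bound $2e(G)\leq 2C_\mathbb{H}n$, and control the walks with both $v,w\in L^\lambda$ by combining $e(G[L^\lambda])\leq C_\mathbb{H}|L^\lambda|$ with the estimate $|L^\lambda|\leq (10C_\mathbb{H})^{\lambda-10}n$ coming from $\rho(G)\geq 2C_\mathbb{H}(10C_\mathbb{H})^{10}$. The only cosmetic difference is that the paper splits the low-weight-$w$ case further according to whether $v\in L_1^\lambda$ or $v\in\overline{L}_1^\lambda$, whereas you handle it in one stroke; the bookkeeping and the resulting constants are identical.
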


\begin{proof}[\bf Proof of Claim \ref{lem4.2}]
    Recall that \( \rho(G) \geq \sqrt{\gamma_\mathbb{H}(n-\gamma_\mathbb{H})} \). Hence 
    \begin{align}\notag
        \gamma_\mathbb{H}(n-\gamma_\mathbb{H})x_u \leq \rho(G)^2x_u &= |N_1(u)|x_u + \sum_{\substack{v \in N_1(u),\,w \in N_1(v)\setminus\{u\}}} x_w\\\label{eq:6}
        &= |N_1(u)|x_u + \sum_{v\in L_1^\lambda \cup \overline{L}_1^\lambda}\,\sum_{\substack{w \in N_{L_{1,2}^\lambda}(v) \cup N_{\overline{L}_{1,2}^\lambda}(v)}} x_w.
    \end{align}   
%    For each \( v \in N_1(u) \), decompose \( N_1(v)\setminus\{u\} \) into
  %  \[
  %  N_1(v)\setminus\{u\} = N_{L_{1,2}^\lambda}(v) \cup N_{\overline{L}_{1,2}^\lambda}(v),
   % \]
   % where \( N_{L_{1,2}^\lambda}(v) \subseteq \bigcup_{i=1}^2 L_i^\lambda \cup \overline{L}_i^\lambda \). 
   % Partition \( N_1(u) = L_1^\lambda \cup \overline{L}_1^\lambda \). For \( v \in L_1^\lambda \), we have
   By the definition of \( L^\lambda \), vertices in \( \overline{L}_{1,2}^\lambda \) satisfy \( x_w < (10C_\mathbb{H})^{-\lambda}x_{u^*} \). Hence 
    \begin{align}\notag
        %\sum_{\substack{v \in L_1^\lambda, \\ w \in N_1(v)\setminus\{u\}}} x_w &\leq 
        \sum_{v \in L_1^\lambda}\left(\sum_{w \in N_{L_{1,2}^\lambda}(v)} x_w + \sum_{w \in N_{\overline{L}_{1,2}^\lambda}(v)} x_w\right) 
        \leq &\sum_{v \in L_1^\lambda}\left(\sum_{w \in N_{L_{1,2}^\lambda}(v)} x_{u^*} + \sum_{w \in N_{\overline{L}_{1,2}^\lambda}(v)} (10C_\mathbb{H})^{-\lambda}x_{u^*}\right) \\\label{eq:7}
        \leq &\left(2e(L_1^\lambda) + e(L_1^\lambda, L_2^\lambda)\right)x_{u^*} + e\left(L_1^\lambda, \overline{L}_{1,2}^\lambda\right)(10C_\mathbb{H})^{-\lambda}x_{u^*},
    \end{align}
 and similarly, 
    \begin{align}\label{eq:8}
        \sum_{v \in \overline{L}_1^\lambda}\sum_{w \in N_{\overline{L}_{1,2}^\lambda}(v)} x_w\leq  \left(2e(\overline{L}_1^\lambda) + e(\overline{L}_1^\lambda, \overline{L_2^\lambda})\right)(10C_\mathbb{H})^{-\lambda}x_{u^*}.
    \end{align}
Together with Theorem~\ref{lem2.1} and the fact that $G$ is $\mathbb{H}$-subdivision-free, one has % we can bound edge terms:
    \[
    2e(L_1^\lambda) + e(L_1^\lambda, L_2^\lambda) \leq 2C_\mathbb{H}|L^\lambda|, \quad \text{and} \quad e\left(L_1^\lambda, \overline{L}_{1,2}^\lambda\right) + 2e(L_1^\lambda) + e\left(\overline{L}_1^\lambda, \overline{L}_2^\lambda\right) < 2C_\mathbb{H}n.
    \]
    
Now, we consider the size of $|L^{\lambda}|$. Note that  $\rho(G) \geq \sqrt{\gamma_\mathbb{H}(n-\gamma_\mathbb{H})}>2C_\mathbb{H}(10C_\mathbb{H})^{10}$ as $n$ is sufficiently large and $x_u \geq (10C_\mathbb{H})^{-\lambda}x_{u^*}$ for every $u\in L^{\lambda}$. Then  
    \[
    |L^{\lambda}|2C_\mathbb{H}(10C_\mathbb{H})^{10-\lambda}x_{u^*} \leq \sum_{u\in L^{\lambda}}\rho(G)x_u = \sum_{u\in L^{\lambda}}\sum_{v \in N_1(u)} x_v \leq \sum_{u\in L^{\lambda}}|N_1(u)|x_{u^*}\leq 2e(G)x_{u^*} < 2C_\mathbb{H}nx_{u^*}.
    \]
    %This implies \(2C_\mathbb{H}(10C_\mathbb{H})^{10-\lambda} < |N_1(u)|\) for all \(u \in L^\lambda\). Summing over \(L^\lambda\) and applying Theorem~\ref{lem2.1}, we get
    %\[
    %2C_\mathbb{H}(10C_\mathbb{H})^{10-\lambda}|L^\lambda| < \sum_{u \in L^\lambda} |N_1(u)| \leq 2e(G) < 2C_\mathbb{H}n.
    %\]
    It follows that for every 
integer $\lambda$ we have
\begin{align}\label{eq:n01}
    |L^\lambda| < (10C_\mathbb{H})^{\lambda-10}n.
\end{align}
    %which directly simplifies to the claimed bound. 
    By substituting \eqref{eq:7}-\eqref{eq:n01} into \eqref{eq:6}, we obtain our desired inequality immediately. % yields \eqref{eq:5}.
%    By Claim~\ref{lem4.1}, \( |L^\lambda| < (10C_\mathbb{H})^{\lambda-10}n \). Substituting these into \eqref{eq:6}-\eqref{eq:8} yields \eqref{eq:5}.
\end{proof}
    
    Next, we show that $|L^4|$ is bounded. %, which depends only on $C_\mathbb{H}$.
   
\begin{claim}\label{lem4.3}
    $|L^4| < (10C_\mathbb{H})^6$.
    \end{claim}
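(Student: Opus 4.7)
The plan is to apply the inequality \eqref{eq:5} of Claim~\ref{lem4.2} to $u = u^{*}$ with $\lambda = 4$. Since $x_{u^{*}}$ is the maximum eigenvector entry, after dividing through by $x_{u^{*}}$ the inequality becomes
\[
\gamma_\mathbb{H}(n-\gamma_\mathbb{H}) \;\leq\; |N_1(u^{*})| \;+\; \frac{2C_\mathbb{H} n}{(10C_\mathbb{H})^{6}} \;+\; \frac{2C_\mathbb{H} n}{(10C_\mathbb{H})^{4}} \;+\; \frac{1}{x_{u^{*}}}\sum_{\substack{v \in \overline{L}_1^{4},\,w \in N_{L_{1,2}^{4}}(v)}} x_w.
\]
The first step is to bound the final sum $S$. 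Using $x_w \leq x_{u^{*}}$ for every $w \in L^{4}$, together with the linear edge bound of Theorem~\ref{lem2.1} applied to $G$, one obtains the crude estimate $S \leq x_{u^{*}}\cdot 2e(G) \leq 2C_\mathbb{H} n\, x_{u^{*}}$. This alone is too weak, so the point of the argument is to refine $S$ using the already-established bound $|L^\lambda| < (10C_\mathbb{H})^{\lambda-10}n$ from~\eqref{eq:n01} and the structure of $L^{4}$.

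The second step is to exploit the eigenequation for each $u \in L^{4}$: since $\rho(G)x_u = \sum_{v \in N(u)} x_v \leq d(u)\, x_{u^{*}}$, every vertex $u \in L^{4}$ satisfies $d(u) \geq \rho(G)\cdot (10C_\mathbb{H})^{-4} \geq (10C_\mathbb{H})^{-4}\sqrt{\gamma_\mathbb{H}(n-\gamma_\mathbb{H})}$. Summing the eigenequation over $u \in L^{4}$ and partitioning the resulting double sum according to whether the outer vertex lies in $L^{4}$ or in $V(G)\setminus L^{4}$ (using $x_v < (10C_\mathbb{H})^{-4} x_{u^{*}}$ in the latter case, and Theorem~\ref{lem2.1} applied to the subdivision-free subgraph $G[L^{4}]$ in the former) yields a preliminary polynomial-in-$n$ bound on $|L^{4}|$, which must then be sharpened.

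The third step bridges this polynomial bound down to the desired constant bound. Assuming for contradiction that $|L^{4}| \geq (10C_\mathbb{H})^{6}$, the degree lower bound combined with Lemma~\ref{lem2.2}, which forbids $K_{\gamma_\mathbb{H}+1,\,\alpha_\mathbb{H}+\binom{\gamma_\mathbb{H}+1}{2}}$ as a subgraph of $G$ (since it already contains an $\mathbb{H}$-subdivision), should lead to a contradiction. The idea is that so many high-degree vertices in $L^{4}$ force, via a pigeonhole / common-neighborhood count, a $(\gamma_\mathbb{H}+1)$-subset of $L^{4}$ sharing at least $\alpha_\mathbb{H}+\binom{\gamma_\mathbb{H}+1}{2}$ common neighbors, producing the forbidden complete bipartite subgraph.

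The main obstacle is precisely this last sharpening: turning the polynomial bound $|L^{4}| < (10C_\mathbb{H})^{-6}\, n$ that is already available from~\eqref{eq:n01} into the constant bound $|L^{4}| < (10C_\mathbb{H})^{6}$. This requires using $\mathbb{H}$-subdivision-freeness at the structural level (through Lemma~\ref{lem2.2}) and not merely through the edge count of Theorem~\ref{lem2.1}, and is likely to involve iterating \eqref{eq:5} with $\lambda \in \{4,5,6\}$ and carefully reusing the bounds on $|L^{\lambda}|$ produced at each step in the style of the bootstrap argument of~\cite{byrne2024general}.
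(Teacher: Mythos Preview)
Your proposal has a genuine gap at the crucial step. The degree lower bound you obtain from the linear eigenequation, namely $d(u)\ge \rho(G)(10C_\mathbb{H})^{-4}$, is only of order $\sqrt{n}$ since $\rho(G)\sim\sqrt{\gamma_\mathbb{H} n}$. With a $\sqrt{n}$ degree, summing over $L^4$ and using Theorem~\ref{lem2.1} gives only $|L^4|\lesssim\sqrt{n}$, not a constant; and the pigeonhole route through Lemma~\ref{lem2.2} cannot be rescued either, because $(10C_\mathbb{H})^6$ vertices of degree $\sim\sqrt{n}$ have no reason to share even a single common neighbour (the average of $d_{L^4}(v)$ over $v\in V(G)$ tends to $0$). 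The bootstrap you allude to at the end remains vague and does not supply the missing linear degree bound.

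The paper's argument is both simpler and essentially different in structure. It applies Claim~\ref{lem4.2} not to $u^*$ but to an \emph{arbitrary} vertex $u_0\in L^4$, and with $\lambda=5$ rather than $\lambda=4$. The point of choosing $\lambda=5$ is that then the last sum in \eqref{eq:5} is bounded by $e(\overline{L}_1^5,L_{1,2}^5)x_{u^*}\le C_\mathbb{H}(|N_1(u_0)|+|L^5|)x_{u^*}$, and $|L^5|\le (10C_\mathbb{H})^{-5}n$ by \eqref{eq:n01}. If one supposes $|N_1(u_0)|<(10C_\mathbb{H})^{-5}n$, the entire right side of \eqref{eq:5} is at most $(1+6C_\mathbb{H})(10C_\mathbb{H})^{-5}n\,x_{u^*}$, whereas the left side is at least $0.7n\cdot(10C_\mathbb{H})^{-4}x_{u^*}=7C_\mathbb{H}(10C_\mathbb{H})^{-5}n\,x_{u^*}$, a contradiction. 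Hence every $u\in L^4$ has $|N_1(u)|\ge (10C_\mathbb{H})^{-5}n$, a \emph{linear} lower bound. Summing this over $L^4$ and invoking $\sum_{u\in L^4}|N_1(u)|\le 2e(G)<2C_\mathbb{H} n$ immediately yields $|L^4|<2C_\mathbb{H}(10C_\mathbb{H})^5<(10C_\mathbb{H})^6$. No appeal to Lemma~\ref{lem2.2} or to common-neighbourhood counting is needed at this stage.
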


\begin{proof}[\bf Proof of Claim \ref{lem4.3}]
    Suppose, for contradiction, that there exists \( u_0 \in L^4 \) such that \( |N_1(u_0)| < (10C_\mathbb{H})^{-5}n \). Substituting \( \lambda = 5 \) in \eqref{eq:5} gives that
    \begin{align}\notag
        \gamma_\mathbb{H}(n-\gamma_\mathbb{H})x_{u_0} &\leq |N_1(u_0)|x_{u_0} +  \frac{4C_\mathbb{H}n}{(10C_\mathbb{H})^{5}} x_{u^*} + e(\overline{L}_1^5, L_{1,2}^5) x_{u^*}\\\notag
        &\leq \left((10C_\mathbb{H})^{-5}n+\frac{4C_\mathbb{H}n}{(10C_\mathbb{H})^{5}}+C_\mathbb{H}(|N_1(u_0)| + |L^5|)\right)x_{u^*}\\\label{eq111}
        &\leq \frac{(1+6C_\mathbb{H})n}{(10C_\mathbb{H})^5}x_{u^*},
        %\left(|N_1(u_0)| + \underbrace{e(\overline{L_1^5}, L_{1,2}^5)}_{\leq C_\mathbb{H}(|N_1(u_0)| + |L^5|)}\right)x_{u^*} \leq \frac{(1+2C_\mathbb{H})n}{(10C_\mathbb{H})^5}x_{u^*},
    \end{align}
 the last inequality holds by \eqref{eq:n01}. %   where Theorem~\ref{lem2.1} bounds edge counts and \eqref{eq:n01} gives \( |L^5| \leq (10C_\mathbb{H})^{-5}n \).

    On the other hand, since \( \gamma_\mathbb{H} \geq 1 \) and \( x_{u_0} \geq (10C_\mathbb{H})^{-4}x_{u^*} \), we have
    \[
    \gamma_\mathbb{H}(n-\gamma_\mathbb{H})x_{u_0} \geq \frac{7}{10}nx_{u_0} \geq 7C_\mathbb{H}(10C_\mathbb{H})^{-5}nx_{u^*},
    \]
    which contradicts~\eqref{eq111}. Thus \( |N_1(u)| \geq (10C_\mathbb{H})^{-5}n \) for all \( u \in L^4 \).
    Summing this over \( L^4 \) and applying Theorem~\ref{lem2.1} yields 
    \[
    |L^4|(10C_\mathbb{H})^{-5}n \leq \sum_{u \in L^4} |N_1(u)| < 2C_\mathbb{H}n.
    \]
    Thus,  $|L^4| < (10C_\mathbb{H})^6$, as desired.
\end{proof}
  We now estimate the degrees of vertices in $L^4$
  with respect to their eigenweights.%  Relate the eigenweight of vertices in $L^4$ to their degrees
\begin{claim}\label{lem4.4}
    For each \( u \in L^4 \), we have 
    $|N_1(u)| \geq \left(\frac{x_u}{x_{u^*}} - \frac{1}{(10C_\mathbb{H})^3}\right)n$.
    \end{claim}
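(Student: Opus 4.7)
The plan is to prove the claim by contradiction, following the template of Claim~\ref{lem4.3}. Suppose some $u\in L^4$ violates the bound, so $|N_1(u)|<\bigl(\tfrac{x_u}{x_{u^*}}-\tfrac{1}{(10C_\mathbb{H})^3}\bigr)n$. Since the claim is trivial whenever $r:=x_u/x_{u^*}\le (10C_\mathbb{H})^{-3}$, we may assume $r>(10C_\mathbb{H})^{-3}$, and in particular $u$ necessarily lies in $L^3$.

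First, I would apply the master inequality \eqref{eq:5} of Claim~\ref{lem4.2} at $\lambda=5$, a choice that balances the two error terms of order $\tfrac{C_\mathbb{H} n}{(10C_\mathbb{H})^5}x_{u^*}$. The leftover sum would be controlled by applying Theorem~\ref{lem2.1} to the induced subgraph $G[N_1(u)\cup L^5]$, yielding $e(\overline{L}_1^5,L_{1,2}^5)\le C_\mathbb{H}(|N_1(u)|+|L^5|)\le C_\mathbb{H}|N_1(u)|+C_\mathbb{H}(10C_\mathbb{H})^{-5}n$, while each $x_w\le x_{u^*}$. Substituting the hypothetical upper bound $|N_1(u)|<(r-(10C_\mathbb{H})^{-3})n$, dividing both sides by $n x_{u^*}$, and using $\gamma_\mathbb{H}(n-\gamma_\mathbb{H})/n\to\gamma_\mathbb{H}$ yields a polynomial inequality in $r$ that must hold. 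Exploiting $u\in L^4$ (so $r\ge (10C_\mathbb{H})^{-4}$) together with the structural fact $C_\mathbb{H}>\gamma_\mathbb{H}$ (since $C_\mathbb{H}>2|H|^3>|H|-1\ge\gamma_H$ for each $H\in\mathbb{H}$) is expected to force a contradiction once $n$ is sufficiently large.

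The main obstacle is that a naive edge-count bound of the form $C_\mathbb{H}|N_1(u)|x_{u^*}$ introduces the factor $C_\mathbb{H}$ as a multiplier on $|N_1(u)|$, which only yields $|N_1(u)|\gtrsim rn/(1+C_\mathbb{H})$ rather than the sharp $|N_1(u)|\gtrsim rn$ one needs; in particular the naive argument is insufficient when $r$ is close to $1$ (e.g.\ for $u=u^*$). To close this gap, I anticipate needing a finer control of the leftover sum in \eqref{eq:5}: for instance, applying the first-order eigenvalue identity $\rho x_v=\sum_{w\in N_1(v)} x_w$ at each $v\in\overline{L}_1^\lambda$, combined with the defining bound $x_v<(10C_\mathbb{H})^{-\lambda}x_{u^*}$, gives the sharper estimate $\sum_{v\in\overline{L}_1^\lambda,\,w\in N_{L_{1,2}^\lambda}(v)} x_w\le \rho\cdot|N_1(u)|\cdot(10C_\mathbb{H})^{-\lambda}x_{u^*}$. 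Combined with the improved constant-size bound $|L^4|\le (10C_\mathbb{H})^6$ from Claim~\ref{lem4.3} and a judicious choice of $\lambda$, this refined estimate should eliminate the spurious $C_\mathbb{H}$ factor and deliver the desired contradiction.
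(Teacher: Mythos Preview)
Your proposal correctly identifies the central obstacle: bounding the tail sum in \eqref{eq:5} by $e(\overline{L}_1^\lambda,L_{1,2}^\lambda)\,x_{u^*}\le C_\mathbb{H}(|N_1(u)|+|L^\lambda|)\,x_{u^*}$ places the factor $C_\mathbb{H}$ in front of $|N_1(u)|$, and this only yields $|N_1(u)|\gtrsim rn/(1+C_\mathbb{H})$. However, your proposed fix does not work. The estimate
\[
\sum_{v\in\overline{L}_1^\lambda}\ \sum_{w\in N_{L_{1,2}^\lambda}(v)} x_w
\ \le\ \sum_{v\in\overline{L}_1^\lambda}\rho(G)\,x_v
\ \le\ \rho(G)\cdot|N_1(u)|\cdot(10C_\mathbb{H})^{-\lambda}x_{u^*}
\]
is valid, but now the coefficient on $|N_1(u)|$ is $\rho(G)\,(10C_\mathbb{H})^{-\lambda}$, and since $\rho(G)\ge\sqrt{\gamma_\mathbb{H}(n-\gamma_\mathbb{H})}$ is of order $\sqrt{n}$ while $\lambda$ is a fixed integer, this coefficient diverges as $n\to\infty$. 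The right-hand side of \eqref{eq:5} then grows like $n^{3/2}$, which is worse than the naive bound, not better. Invoking $|L^4|\le(10C_\mathbb{H})^6$ does not help either: it caps $|N_{L_{1,2}^4}(v)|$ at $(10C_\mathbb{H})^6$ for each $v$, giving a coefficient $(10C_\mathbb{H})^6$ on $|N_1(u)|$---still a large constant, not the sharp factor you need.

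What is actually required is a coefficient of exactly $\gamma_\mathbb{H}-1$ on $|N_1(u)|$, and this comes from a structural argument rather than a spectral one. The paper works at $\lambda=4$ and sets $L_0=\{v\in\overline{L}_1^4:|N(v)\cap L_{1,2}^4|\ge\gamma_\mathbb{H}\}$. If $|L_0|$ were larger than a constant depending only on $\mathbb{H}$, then pigeonholing over the $\binom{|L_{1,2}^4|}{\gamma_\mathbb{H}}$ possible $\gamma_\mathbb{H}$-subsets of $L_{1,2}^4$ (which is bounded by Claim~\ref{lem4.3}) produces $\gamma_\mathbb{H}$ vertices of $L_{1,2}^4$ with many common neighbours in $L_0\subseteq N_1(u)$; together with $u$ itself this yields a copy of $K_{\gamma_\mathbb{H}+1,\,\alpha_\mathbb{H}+\binom{\gamma_\mathbb{H}+1}{2}}$, contradicting Lemma~\ref{lem2.2}. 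Hence $|L_0|=O_\mathbb{H}(1)$, and every other $v\in\overline{L}_1^4$ has at most $\gamma_\mathbb{H}-1$ neighbours in $L_{1,2}^4$, giving
\[
e(\overline{L}_1^4,L_{1,2}^4)\ \le\ (\gamma_\mathbb{H}-1)\,|N_1(u)|\ +\ O_\mathbb{H}(1).
\]
Feeding this into \eqref{eq:5} with $\lambda=4$ produces $\gamma_\mathbb{H}(n-\gamma_\mathbb{H})x_u\le\gamma_\mathbb{H}\bigl(|N_1(u)|+\tfrac{4C_\mathbb{H}n}{(10C_\mathbb{H})^4}\bigr)x_{u^*}$, from which the claim follows directly---no contradiction argument is needed. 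The essential input you are missing is this use of the forbidden-subdivision hypothesis (via Lemma~\ref{lem2.2}) to bound degrees into $L_{1,2}^4$ by $\gamma_\mathbb{H}-1$ rather than by an arbitrary constant.
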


\begin{proof}[\bf Proof of Claim \ref{lem4.4}]
    Given a vertex $u\in L^4$, let $L_0=\{v\in \overline{L}_1^4: |N(v)\cap L_{1,2}^4|\geq \gamma_{\mathbb{H}}\}$ and $H^*$ be a minimal  graph with respect to $\mathbb{H}$. % \( L_0 \subseteq \overline{L}_1^4 \) contain vertices with \(\geq \gamma_\mathbb{H}\) neighbors in \( L_{1,2}^4 \). 
    We first claim that %for each $H\in\mathbb{H}$, 
    $|L_0|\leq \eta(\alpha_\mathbb{H}+\binom{|H^*|+1}{2})$, where $\eta:=\binom
    {|L_{1,2}^4|}{\gamma_\mathbb{H}}$.
    In fact, if $|L_{1,2}^4|\leq \gamma_\mathbb{H}-1$, then $L_0=\emptyset$ and we are done.
    Now we  consider the case $|L_{1,2}^4|\geq\gamma_H$.
    Suppose in contrast that $|L_0|>\eta(\alpha_\mathbb{\mathbb{H}}+\binom{|H^*|+1}{2})$.
    Since there are only $\eta$ options for vertices in $L_0$ to choose $\gamma_\mathbb{H}$ neighbors from $L_{1,2}^4$, we can find $\gamma_\mathbb{H}$ vertices in $L_{1,2}^4$ with at least $|L_0|/\eta>\alpha_\mathbb{H}+\binom{|H^*|+1}{2}$ common neighbors in $L_0$.
    Furthermore, $u\notin L_{1,2}^4$ and $L_0\subseteq \overline{L}_1^4\subseteq N_1(u)$.
    Hence $u$ and those $\gamma_\mathbb{H}$ vertices have $\alpha_\mathbb{H}+\binom{|H^*|+1}{2}$ common neighbors, which implies that $G$ contains a bipartite subgraph $G[S,T]$ isomorphic to $K_{\gamma_\mathbb{H}+1,\alpha_\mathbb{H}+\binom{|H^*|+1}{2}}$. Thus, by Lemma~\ref{lem2.2}, $G$ contains an $H^*$-subdivision, a contradiction. 
    Therefore, $|L_0|\leq \eta(\alpha_\mathbb{H}+\binom{|H^*|+1}{2})$.
    
    By Claim~\ref{lem4.3} and the definition of $L_0$, we have %\[
    %e(L_0, L_{1,2}^4) \leq |L_0||L^4|\leq (10C_\mathbb{H})^{-4}n \quad \text{and} \quad e(\overline{L}_1^4\setminus L_0, L_{1,2}^4) \leq |N_1(u)|(\gamma_\mathbb{H}-1)
    %\]
%It follows that 
    \begin{align}\label{eq:10}
        e(\overline{L}_1^4, L_{1,2}^4) =e(L_0, L_{1,2}^4)+e(\overline{L}_1^4\setminus L_0, L_{1,2}^4) \leq (10C_\mathbb{H})^{-4}n+|N_1(u)|(\gamma_\mathbb{H}-1).
    \end{align}
    Substituting \(\lambda = 4\) in 
    \eqref{eq:5} yields \allowdisplaybreaks
    \begin{align*}
    \gamma_\mathbb{H}(n-\gamma_\mathbb{H})x_u &\leq |N_1(u)|x_u + \left( \frac{2C_\mathbb{H}n}{(10C_\mathbb{H})^{6}} + \frac{2C_\mathbb{H}n}{(10C_\mathbb{H})^4} \right)x_{u^*} + \sum_{\substack{v \in \overline{L}_1^4,\,w \in N_{L_{1,2}^4}(v)}} x_w\\
    &\leq \left(|N_1(u)| +  \frac{3C_\mathbb{H}n}{(10C_\mathbb{H})^{4}}+e(\overline{L}_1^4, L_{1,2}^4)\right)x_{u^*}\\
    &\leq \left(|N_1(u)| +  \frac{4C_\mathbb{H}n}{(10C_\mathbb{H})^{4}}+|N_1(u)|(\gamma_\mathbb{H}-1)\right)x_{u^*}\\
    &\leq \gamma_{\mathbb{H}}\left(|N_1(u)| +  \frac{4C_\mathbb{H}n}{(10C_\mathbb{H})^{4}}\right)x_{u^*}.
    \end{align*}
    This implies 
    \[
    |N_1(u)| \geq (n-\gamma_{\mathbb{H}})\frac{x_u}{x_{u^*}}-\frac{4C_\mathbb{H}n}{(10C_\mathbb{H})^{4}}=\left(\frac{x_u}{x_{u^*}}-\frac{4C_\mathbb{H}}{(10C_\mathbb{H})^{4}}\right)n-\gamma_{\mathbb{H}}\frac{x_u}{x_{u^*}}\geq \left(\frac{x_u}{x_{u^*}}-\frac{1}{(10C_\mathbb{H})^{3}}\right)n, 
    \]
    the last inequality holds as $\gamma_{\mathbb{H}}\frac{x_u}{x_{u^*}}\leq \gamma_{\mathbb{H}}\leq \frac{6C_\mathbb{H}n}{(10C_\mathbb{H})^{4}}$. 
    %The final bound follows from \( \gamma_\mathbb{H}x_u/x_{u^*} \leq \frac{7C_\mathbb{H}}{(10C_\mathbb{H})^4} \) by Claim~\ref{lem4.3}.
\end{proof}

     Finally we consider the eigenweights and degrees of vertices in $L^1$. 
    %Now, choose $\lambda=1$.     By the definition of $L^\lambda$, it is clear that $u^*\in L^1$ and $L^1\subseteq L^4$.
    
\begin{claim}\label{lem4.5}
    For every \( u \in L^1 \), we have
    $x_u \geq \left(1-\frac{1}{2(10C_\mathbb{H})^2}\right)x_{u^*}$ and $|N_1(u)| \geq \left(1-\frac{1}{(10C_\mathbb{H})^2}\right)n$.
    \end{claim}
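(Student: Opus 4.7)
The plan splits naturally into two parts. Since $(10C_\mathbb{H})^{-3} \leq \frac{1}{2(10C_\mathbb{H})^2}$ for $C_\mathbb{H}\geq 1$, Claim~\ref{lem4.4} applied to $u$ gives
\[
|N_1(u)| \geq \Bigl(\tfrac{x_u}{x_{u^*}} - (10C_\mathbb{H})^{-3}\Bigr)n
\geq \Bigl(1 - \tfrac{1}{2(10C_\mathbb{H})^2} - (10C_\mathbb{H})^{-3}\Bigr)n
\geq \Bigl(1 - \tfrac{1}{(10C_\mathbb{H})^2}\Bigr)n,
\]
so the degree bound will follow automatically once the eigenweight bound is in hand. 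Thus the real task is to show $x_u \geq \bigl(1-\tfrac{1}{2(10C_\mathbb{H})^2}\bigr)x_{u^*}$.

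For the eigenweight bound, I would retrace the proof of Claim~\ref{lem4.4} but keep track of the full contribution of $|N_1(u)|$ instead of discarding it. Applying Claim~\ref{lem4.2} with $\lambda = 4$ at $u$, together with the subdivision-saturation argument from Claim~\ref{lem4.4} that produces
\[
e\bigl(\overline{L}_1^4(u), L_{1,2}^4(u)\bigr) \leq (10C_\mathbb{H})^{-4}n + |N_1(u)|(\gamma_\mathbb{H}-1),
\]
should yield the cleaner inequality
\[
\gamma_\mathbb{H}(n-\gamma_\mathbb{H})x_u \leq |N_1(u)|\bigl(x_u + (\gamma_\mathbb{H}-1)x_{u^*}\bigr) + \tfrac{4C_\mathbb{H}n}{(10C_\mathbb{H})^4}x_{u^*}.
\]
By itself this inequality only delivers a lower bound on $|N_1(u)|$ (recovering Claim~\ref{lem4.4}), so the plan is to pair it with a second, independent inequality coming from the walk-count identity. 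Specifically, since $u^*\in L^4$, Claim~\ref{lem4.4} at $u^*$ gives $|N_1(u^*)| \geq (1-(10C_\mathbb{H})^{-3})n$, and from $\rho x_v \geq x_{u^*}$ for each $v\in N_1(u^*)$, the eigenequation at $u$ yields
\[
\rho^2 x_u \;\geq\; |N_1(u)\cap N_1(u^*)|\, x_{u^*} \;\geq\; \bigl(|N_1(u)| + |N_1(u^*)| - n\bigr)x_{u^*}.
\]

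With these two inequalities plus $|N_1(u)|\leq n$ and $\rho^2 \leq 2C_\mathbb{H} n$ (from Theorem~\ref{lem2.1}) and $\rho^2 \geq \gamma_\mathbb{H}(n-\gamma_\mathbb{H})$, the plan is to eliminate $|N_1(u)|$ between the two relations and derive a quadratic constraint on $\alpha := x_u/x_{u^*}$ whose only solutions are $\alpha$ close to $0$ or $\alpha$ close to $1$. Since $u\in L^1$ means $\alpha \geq (10C_\mathbb{H})^{-1}$, the small-$\alpha$ branch is excluded, forcing $\alpha \geq 1 - \tfrac{1}{2(10C_\mathbb{H})^2}$, as desired.

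The main obstacle I anticipate is calibrating the constants precisely: the looseness between the trivial upper bound $\rho^2 \leq 2C_\mathbb{H} n$ and the lower bound $\rho^2 \geq \gamma_\mathbb{H} n$ is a constant factor, so the quadratic inequality must be set up carefully — in particular the factor $\frac{1}{2(10C_\mathbb{H})^2}$ (rather than $\frac{1}{(10C_\mathbb{H})^2}$) in the target bound appears designed to leave room for the $(10C_\mathbb{H})^{-3}$ loss when one later invokes Claim~\ref{lem4.4} to recover the degree estimate. The argument for $\gamma_\mathbb{H} = 1$ is likely cleaner (the walk-count alone essentially forces $\alpha$ close to $1$), whereas for $\gamma_\mathbb{H} \geq 2$ the combined system is essential; I expect a short case-split or a uniform parametrization to handle both.
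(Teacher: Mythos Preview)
Your reduction of the degree bound to the eigenweight bound via Claim~\ref{lem4.4} is correct and matches the paper. The problem is in your proposed route to the eigenweight bound.

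Your first displayed inequality (Claim~\ref{lem4.2} at $u$ together with the edge bound~\eqref{eq:10}) is valid, but as you already note, by itself it only reproduces Claim~\ref{lem4.4}. The genuine gap is in the second step. Your walk-count inequality $\rho^2 x_u \ge |N_1(u)\cap N_1(u^*)|\,x_{u^*}$ is correct, and with Claim~\ref{lem4.4} at $u^*$ it gives $|N_1(u)|\le \rho^2\alpha + (10C_\mathbb{H})^{-3}n$. But feeding this upper bound on $|N_1(u)|$ back into your first inequality yields, after cancelling~$\alpha$,
\[
\frac{\gamma_\mathbb{H}}{\alpha+\gamma_\mathbb{H}-1}\;\le\;\frac{\rho^2}{n}+o(1)\;\le\;2C_\mathbb{H}+o(1),
\]
i.e.\ $\alpha\ge 1-\gamma_\mathbb{H}\bigl(1-\tfrac{1}{2C_\mathbb{H}}\bigr)$, which is vacuous for $\gamma_\mathbb{H}\ge 2$ and only gives $\alpha\gtrsim 1/(2C_\mathbb{H})$ when $\gamma_\mathbb{H}=1$. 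The quadratic you envisage has its ``large'' root near $1/(2C_\mathbb{H})$, not near $1$. This is not a matter of calibration: the gap between the only available bounds $\gamma_\mathbb{H}(n-\gamma_\mathbb{H})\le\rho^2\le 2C_\mathbb{H}n$ is a factor of $2C_\mathbb{H}/\gamma_\mathbb{H}$, and no rearrangement of your two inequalities can close it. In particular, even for $\gamma_\mathbb{H}=1$ the walk-count does \emph{not} force $\alpha$ close to~$1$.

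The paper avoids any upper bound on $\rho^2$ altogether. It applies inequality~\eqref{eq:5} at the vertex $u^*$ (not at $u$), and isolates in the sum $\sum_{v\in\overline{L}_1^4(u^*)}\sum_{w} x_w$ the contribution of the single vertex $w=u_0$. Since $u_0\in L^1$ has at least $9n/(100C_\mathbb{H})$ neighbours in $\overline{L}_1^4(u^*)$ but (by hypothesis) eigenweight at most $\bigl(1-\tfrac{1}{2(10C_\mathbb{H})^2}\bigr)x_{u^*}$, these edges create a saving of order $n/C_\mathbb{H}^3$ in the right-hand side. After bounding the remaining terms exactly as in~\eqref{eq:10} and using $|N_1(u^*)|\le n$, this saving makes the inequality at $u^*$ fail, giving the desired contradiction.
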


\begin{proof}[\bf Proof of Claim \ref{lem4.5}]
    Suppose to the contrary that  there is a vertex \( u_0 \in L^1 \) satisfying \( x_{u_0} < \left(1-\frac{1}{2(10C_\mathbb{H})^2}\right)x_{u^*} \). It is obvious that $L^1\subseteq L^4$ and $u^*\in L^1$.  By Claim~\ref{lem4.4} and the definition of \( L^1 \), we have
    \[
    |N_1(u^*)| \geq \left(1-\frac{1}{(10C_\mathbb{H})^3}\right)n\ \ \text{and}\ \ 
    |N_1(u_0)| \geq \left(\frac{1}{10C_\mathbb{H}} - \frac{1}{(10C_\mathbb{H})^3}\right)n.
    \]
    %Let \( \overline{L}_1^4 = N_1(u^*) \setminus L^4 \). 
    Together with Claim~\ref{lem4.3} one has $|\overline{L}_1^4(u^*)| \geq \left(1 - \frac{2}{(10C_\mathbb{H})^3}\right)n$. Thus, 
    \begin{align}\label{eq:L}
        |\overline{L}_1^4(u^*) \cap N_1(u_0)| \geq \frac{9n}{100C_\mathbb{H}}.
    \end{align}
It follows that \( u_0 \in L_{1,2}^4(u^*) \). Substituting \( \lambda = 4 \) in \eqref{eq:5} implies\allowdisplaybreaks
\begin{align*}
    \gamma_\mathbb{H}(n-\gamma_\mathbb{H})x_{u^*} &\leq \left(|N_1({u^*})| +  \frac{3C_\mathbb{H}n}{(10C_\mathbb{H})^{4}} +e\left(\overline{L}_1^4(u^*), L_{1,2}^4(u^*)\setminus \{u_0\}\right) \right)x_{u^*}+e\left(\overline{L}_1^4(u^*), \{u_0\}\right)x_{u_0} \\
    &\leq \left(|N_1({u^*})| +  \frac{3C_\mathbb{H}n}{(10C_\mathbb{H})^{4}} +e\left(\overline{L}_1^4(u^*), L_{1,2}^4(u^*)\right) \right)x_{u^*}+e\left(\overline{L}_1^4(u^*), \{u_0\}\right)(x_{u_0}-x_{u^*})\\
    &\leq 
    \left(\gamma_\mathbb{H}|N_1({u^*})| +  \frac{4C_\mathbb{H}n}{(10C_\mathbb{H})^{4}}\right)x_{u^*}-\frac{9n}{100C_\mathbb{H}}\cdot \frac{x_{u^*}}{2(10C_{\mathbb{H}})^2},
\end{align*}
the last inequality follows by  \eqref{eq:10}, \eqref{eq:L} and our assumption for $x_{u_0}$. Furthermore, notice that $\gamma_{\mathbb{H}}^2\leq \frac{C_{\mathbb{H}}n}{3(10C_{\mathbb{H}})^4}$ as $n$ is sufficiently large. Therefore,
$$
\gamma_\mathbb{H}n \leq \left(\gamma_\mathbb{H}|N_1({u^*})| +  \frac{4C_\mathbb{H}n}{(10C_\mathbb{H})^{4}}\right)-\frac{9n}{100C_\mathbb{H}}\cdot \frac{1}{2(10C_{\mathbb{H}})^2}+\frac{C_{\mathbb{H}}n}{3(10C_{\mathbb{H}})^4}<\gamma_{\mathbb{H}} n,
$$
a contradiction. 
    Therefore,  \( x_u \geq \left(1-\frac{1}{2(10C_\mathbb{H})^2}\right)x_{u^*} \) for every $u\in L^1$. 
    
    Recall that $L^1\subseteq L^4$. It follows from Claim~\ref{lem4.4} that % The neighborhood bound follows directly from Claim~\ref{lem4.4} taht %:
    \[
    |N_1(u)| \geq \left(1 - \frac{1}{2(10C_\mathbb{H})^2} - \frac{1}{(10C_\mathbb{H})^3}\right)n \geq \left(1 - \frac{1}{(10C_\mathbb{H})^2}\right)n.\qedhere
    \]
\end{proof}

Now we know that the vertices in $L^1$ possess the properties that we desire, the remaining work is to prove that there are exactly $\gamma_\mathbb{H}$ vertices in $L^1$.

\begin{claim}\label{lem4.6}
     \( |L^1| = \gamma_\mathbb{H} \).
\end{claim}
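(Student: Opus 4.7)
The plan is to prove both $|L^1|\leq \gamma_\mathbb{H}$ and $|L^1|\geq \gamma_\mathbb{H}$ by contradiction. The upper bound exploits the fact (Claim~\ref{lem4.5}) that every vertex of $L^1$ already has degree close to $n$: this forces many common neighbors among any $\gamma_\mathbb{H}+1$ vertices of $L^1$, producing a large complete bipartite subgraph that yields a forbidden subdivision via Lemma~\ref{lem2.2}. The lower bound instead uses the spectral hypothesis $\rho(G)^2\geq \gamma_\mathbb{H}(n-\gamma_\mathbb{H})$ combined with the global edge bound of Theorem~\ref{lem2.1}, by classifying walks of length two from $u^*$.

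For the upper bound, assume $|L^1|\geq \gamma_\mathbb{H}+1$ and pick any $\gamma_\mathbb{H}+1$ vertices $u_1,\ldots,u_{\gamma_\mathbb{H}+1}$ from $L^1$. By Claim~\ref{lem4.5}, each $u_i$ has at most $(10C_\mathbb{H})^{-2}n$ non-neighbors, so the common neighborhood of $\{u_1,\ldots,u_{\gamma_\mathbb{H}+1}\}$ in $V(G)\setminus\{u_1,\ldots,u_{\gamma_\mathbb{H}+1}\}$ has size at least
$$n - (\gamma_\mathbb{H}+1) - (\gamma_\mathbb{H}+1)(10C_\mathbb{H})^{-2}n,$$
which exceeds $\alpha_\mathbb{H}+\binom{\gamma_\mathbb{H}+1}{2}$ for $n$ large. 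Hence $G$ contains $K_{\gamma_\mathbb{H}+1,\,\alpha_\mathbb{H}+\binom{\gamma_\mathbb{H}+1}{2}}$ as a subgraph, which in turn contains an $H^*$-subdivision by Lemma~\ref{lem2.2} applied to a minimal graph $H^*\in \mathbb{H}$, contradicting the fact that $G$ is $\mathbb{H}$-subdivision-free.

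For the lower bound, assume $|L^1|\leq \gamma_\mathbb{H}-1$ and expand the second-moment identity at $u^*$:
$$\rho(G)^2 x_{u^*} \;=\; \sum_{v\in N_1(u^*)}\sum_{w\in N_1(v)} x_w,$$
classifying each walk $u^*\to v\to w$ according to $w$. Walks with $w=u^*$ contribute $|N_1(u^*)|\,x_{u^*}$; walks with $w\in L^1\setminus\{u^*\}$ contribute at most $(|L^1|-1)|N_1(u^*)|\,x_{u^*}$, since $x_w\leq x_{u^*}$ and there are at most $|N_1(u^*)|$ choices of $v$ for each $w$; walks with $w\notin L^1\cup\{u^*\}$ satisfy $x_w<(10C_\mathbb{H})^{-1}x_{u^*}$, and the total number of such walks is at most $\sum_{v\in N_1(u^*)}d_G(v)\leq 2e(G)\leq 2C_\mathbb{H} n$ by Theorem~\ref{lem2.1}. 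Summing gives
$$\rho(G)^2 \;\leq\; |L^1|\cdot|N_1(u^*)| \;+\; \tfrac{n}{5}.$$
Combining with $\rho(G)^2\geq \gamma_\mathbb{H}(n-\gamma_\mathbb{H})$, $|L^1|\leq \gamma_\mathbb{H}-1$ and $|N_1(u^*)|\leq n-1$ yields $\tfrac{4n}{5}\leq \gamma_\mathbb{H}^2-\gamma_\mathbb{H}+1$, which is impossible for $n$ sufficiently large.

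I do not anticipate any serious obstacle here; both directions are essentially one-line computations once the right bookkeeping is set up. The only subtle point is in the lower-bound counting, where the contribution from walks landing outside $L^1$ must be bounded by the global edge count $e(G)\leq C_\mathbb{H} n$ rather than by a pointwise degree bound at $u^*$ (which we have no control over). Once this is done, combining the two directions gives $|L^1|=\gamma_\mathbb{H}$, completing the proof of Claim~\ref{lem4.6} and thereby of Lemma~\ref{lem2.4}.
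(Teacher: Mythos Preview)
Your proof is correct. The upper bound ($|L^1|\leq\gamma_\mathbb{H}$) is essentially identical to the paper's: large common neighborhoods among $\gamma_\mathbb{H}+1$ vertices of $L^1$ force a $K_{\gamma_\mathbb{H}+1,\alpha_\mathbb{H}+\binom{\gamma_\mathbb{H}+1}{2}}$ and hence a forbidden subdivision via Lemma~\ref{lem2.2}.

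For the lower bound your argument diverges from the paper's. The paper feeds the assumption $|L^1|\leq\gamma_\mathbb{H}-1$ back into the general inequality \eqref{eq:5} of Claim~\ref{lem4.2} with $\lambda=4$, splitting the last sum over $L_{1,2}^4$ according to whether $w\in L^1$ or $w\in L_{1,2}^4\setminus L^1$; the first part is bounded by $(\gamma_\mathbb{H}-2)|N_1(u^*)|$ and the second by $C_\mathbb{H} n/(10C_\mathbb{H})$ via Theorem~\ref{lem2.1}, giving $\gamma_\mathbb{H}(n-\gamma_\mathbb{H})\leq(\gamma_\mathbb{H}-\tfrac45)n$. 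You instead expand $\rho(G)^2 x_{u^*}$ directly and classify walks by whether the terminal vertex lies in $L^1$, bounding the off-$L^1$ contribution globally by $2e(G)\cdot(10C_\mathbb{H})^{-1}x_{u^*}\leq\frac{n}{5}x_{u^*}$. This is a cleaner route: it bypasses the $L^4$ apparatus and Claim~\ref{lem4.2} entirely, needing only the definition of $L^1$ and the edge bound of Theorem~\ref{lem2.1}. The paper's approach has the advantage of reusing already-established machinery, but yours is more self-contained for this particular claim.
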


\begin{proof}
    We first suppose that \( |L^1| \geq \gamma_\mathbb{H} + 1 \) and choose $L'\subseteq L^1$ with $|L'|=\gamma_\mathbb{H} + 1=\gamma_{H^*} + 1$, where $H^*$ is a minimal graph with respect to $\mathbb{H}$. By applying Claim \ref{lem4.5}, we obtain  %Since each \( u \in L^1 \) has at most \( \frac{n}{(10C_\mathbb{H})^2} \) non-neighbors, they create common neighborhoods
    \[
    \left|\bigcap_{u \in L'} N(u)\right|\geq n - \frac{(\gamma_{H^*}+1)n}{(10C_\mathbb{H})^2} \geq \alpha_{H^*} + \binom{\gamma_{H^*}+1}{2}.
    \]
    This forces a complete bipartite subgraph \( K_{\gamma_{H^*}+1, \alpha_H + \binom{\gamma_{H^*}+1}{2}} \). Combining  Lemma \ref{lem2.2}, $G$ contains an $H^*$-sudivision, a contradiction. Hence  \( |L^1| \leq \gamma_\mathbb{H}\). 
    
   Next, suppose that $|L^1|\leq \gamma_{\mathbb{H}}-1$. In the following, we consider the neighbors of $u^*$ in $L^4$.  Since  \( u^* \in L^1 \setminus L_{1,2}^4 \), one has $|L^1\cap L_{1,2}^4|\leq \gamma_{\mathbb{H}}-2$. Therefore, $e(\overline{L}_1^4, L^1 \cap L_{1,2}^4) \leq (\gamma_\mathbb{H}-2)|N_1(u^*)|$. 
    %\[
    %e(\overline{L}_1^4, L^1 \cap L_{1,2}^4) \leq (\gamma_\mathbb{H}-2)n \quad \text{and} \quad e(\overline{L}_1^4, L_{1,2}^4 \setminus L^1) \leq C_\mathbb{H}n.
    %\]
    The definition of $L^1$ implies that  $x_w\leq \frac{x_{u^*}}{10C_{\mathbb{H}}}$  for every vertex $w\in  L_{1,2}^4\setminus L^1$.  Substituting \( \lambda = 4 \) in \eqref{eq:5} gives that
    \begin{align*}
    \gamma_\mathbb{H}(n-\gamma_\mathbb{H})x_{u^*} &\leq \left(|N_1({u^*})| +  \frac{3C_\mathbb{H}n}{(10C_\mathbb{H})^{4}} +e\left(\overline{L}_1^4,L^1 \cap L_{1,2}^4\right) \right)x_{u^*}+e\left(\overline{L}_1^4,L_{1,2}^4\setminus L^1\right)\frac{x_{u^*}}{10C_{\mathbb{H}}} \\
    &\leq \left((\gamma_{\mathbb{H}}-1)|N_1({u^*})| +  \frac{3C_\mathbb{H}n}{(10C_\mathbb{H})^{4}}\right)x_{u^*}+C_{\mathbb{H}}n\cdot \frac{x_{u^*}}{10C_{\mathbb{H}}}\\
    &\leq \left((\gamma_{\mathbb{H}}-1) +  \frac{3C_\mathbb{H}}{(10C_\mathbb{H})^{4}}+\frac{1}{10}\right)nx_{u^*}\\
    &\leq \left(\gamma_{\mathbb{H}}-\frac{4}{5}\right)nx_{u^*},
\end{align*}
the second inequality follows by Theorem  \ref{lem2.1}. This yields $\gamma_{\mathbb{H}}^2\geq \frac{4}{5}n$, which is a contradiction as $n$ is sufficiently large. Thus, \( |L^1| = \gamma_\mathbb{H} \), as desired.
\end{proof}

%Recall that to prove Lemma~\ref{lem2.4}, it suffices to find a set $L$ of exactly $\gamma_\mathbb{H}$ vertices in $G$ such that $x_u\geq (1-\frac{1}{(10C_\mathbb{H})^2})x_{u^*}$ and $|N_1(u)|\geq (1-\frac{1}{(10C_\mathbb{H})^2})n$ for every $u\in L$.
Combining Claim~\ref{lem4.5} and Claim~\ref{lem4.6}, we immediately obtain the desired result by choosing $L=L^1$.
\end{proof}
\section{More characterization of ${\rm SPEX}(n,\mathbb{H}_{\rm sub})$}\label{sec5}

Building on Theorem~\ref{thm1.3}, we shall focus on the characterization of $G^*-L$.
Before this, we first introduce some new notations.
Let $\mathbb{H}$ be a family of graphs and $s$ be an integer. Choosing an arbitrary graph $H \in \mathbb{H}$, we define a family of induced subgraphs of $H$ as follows: 
\begin{align*}
    \Gamma_s(H)=\{H[S]:S\subseteq V(H), |S|=s\}.
\end{align*}
A member $H[S]$ in $\Gamma_s(H)$ is said to be \textit{irreducible}, if $\Gamma_s(H)$ does not contain any member isomorphic to a proper subgraph of $H[S]$.
Now let $\Gamma^*_s(H)$ be the family of $s$-vertex irreducible induced subgraphs of $H$ and
\begin{align*} \Gamma(\mathbb{H})=\bigcup_{H\in\mathbb{H}}\Gamma^*_{|H|-\gamma_\mathbb{H}}(H).
\end{align*}

%A graph is said to be \textit{$\mathbb{H}$-subdivision saturated}, if it is $\mathbb{H}$-subdivision-free, but the addition of any new edge creates an $H$-subdivision for some $H\in \mathbb{H}$. %adding an edge between a pair of non-adjacent vertices always yields an $H$-subdivision for some 
%Let ${\rm SAT}(n,\mathbb{H}_{\rm sub})$ denote the family of $n$-vertex $\mathbb{H}$-subdivision saturated graphs.
With the notations above, we have the following result.
\begin{theorem}
\label{cor5.1}
Let $\mathbb{H}$ be a family of graphs and $n$ be a sufficiently large integer.  If $G\in {\rm SPEX}(n,\mathbb{H})$, then there is a vertex subset $L$ of size $\gamma_{\mathbb{H}}$ such that  $G-L$ %\in {\rm SAT}(n-\gamma_\mathbb{H},\Gamma(\mathbb{H})_{\rm sub})$ 
is $\Gamma(\mathbb{H})$-subdivision-saturated. %In particular, if $\mathbb{H}=\{H\}$, then $G-L\in{\rm SAT}(n-\gamma_H,\Gamma_{\alpha_H+1}^*(H)_{\rm sub})$.
\end{theorem}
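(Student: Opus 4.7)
The statement has two parts: $(i)$ $G-L$ is $\Gamma(\mathbb{H})$-subdivision-free, and $(ii)$ for every non-edge $uv$ of $G-L$, the graph $(G-L)+uv$ contains a $\Gamma(\mathbb{H})$-subdivision. By Theorem~\ref{thm1.3} I take $L$ to be the $\gamma_\mathbb{H}$-clique of a spanning copy of $B_{\gamma_\mathbb{H},n-\gamma_\mathbb{H}}$ inside $G$, so that $L$ induces a clique and every vertex of $L$ is universal in $G$.

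For $(i)$, suppose for contradiction that $G-L$ contains a subdivision of some $H'\in\Gamma(\mathbb{H})$, i.e.\ $H'\cong H[S]$ with $H\in\mathbb{H}$ and $|S|=|H|-\gamma_\mathbb{H}$. Fix any bijection between $V(H)\setminus S$ and $L$ and extend the $H'$-subdivision by using $L$ as the branch vertices for $V(H)\setminus S$, realizing the remaining edges of $H$ as single edges: edges inside $V(H)\setminus S$ use the clique on $L$, and edges between $V(H)\setminus S$ and $S$ use the universal adjacencies from $L$ to $V(G)\setminus L$. Since these added paths have length one, they cannot create internal-vertex conflicts, so I obtain an $H$-subdivision in $G$, contradicting $G$ being $\mathbb{H}$-subdivision-free.

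For $(ii)$, I first note that $G$ is $\mathbb{H}$-subdivision-saturated: $G$ is connected and $G\in\mathrm{SPEX}(n,\mathbb{H}_{\rm sub})$, so by Perron--Frobenius any added non-edge would strictly increase $\rho$, forcing an $\mathbb{H}$-subdivision. Fix a non-edge $uv$ of $G-L$ and choose an $H$-subdivision $H_S$ in $G+uv$ (with $H\in\mathbb{H}$) minimizing $|V(H_S)|$; let $\phi\colon V(H)\to B$ be the bijection onto the set $B$ of branch vertices, and note that $uv$ must lie on some path $P^{e^*}$ of $H_S$ since otherwise $H_S\subseteq G$. The key structural fact I will prove is: \emph{by universality of every $w\in L$, if $w$ is internal to some path $P$ of $H_S$ then $P$ has length exactly $2$, with both endpoints in $V(G)\setminus L$.} Indeed, any non-consecutive neighbor of $w$ on $P$ would yield a shortcut (violating minimality), and an endpoint of $P$ in $L$ would collapse $P$ to a single edge. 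In particular, since $u,v\notin L$, the path $P^{e^*}$ has no internal vertex in $L$.

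Let $P_1,\dots,P_t$ be the length-$2$ paths of $H_S$ with $L$-interiors and let $e_{P_1},\dots,e_{P_t}\in E(H)\setminus\{e^*\}$ be the corresponding distinct edges, each having both endpoints in $\phi^{-1}(B\setminus L)$. Using the slack $|H|\geq\gamma_\mathbb{H}+2$ (which follows from $\alpha_H\geq 1$), I select $V'\subseteq\phi^{-1}(B\setminus L)$ of size exactly $|H|-\gamma_\mathbb{H}$ such that $\phi^{-1}(B\setminus L)\setminus V'$ is a vertex cover of $\{e_{P_1},\dots,e_{P_t}\}$ disjoint from the two endpoints of $e^*$; this is possible because each $e_{P_i}\neq e^*$ has at least one endpoint outside the endpoints of $e^*$, and the extra slots needed can be padded out by the $|H|-\gamma_\mathbb{H}-2$ remaining vertices of $\phi^{-1}(B\setminus L)$. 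Restricting $H_S$ to the branch vertices $\phi(V')$ together with the paths corresponding to $E(H[V'])$ then produces a subdivision of $H[V']\in\Gamma_{|H|-\gamma_\mathbb{H}}(H)$ inside $(G-L)+uv$ that actually uses the edge $uv$; iteratively discarding surplus paths if $H[V']$ is not irreducible produces a subdivision of some $H''\in\Gamma^*_{|H|-\gamma_\mathbb{H}}(H)\subseteq\Gamma(\mathbb{H})$, as required. The main obstacles I expect are proving the structural lemma on $L$-internal vertices and threading the vertex-cover-plus-padding construction of $V'$ so that the size, covering, and $e^*$-preservation conditions all hold simultaneously.
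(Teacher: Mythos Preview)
Your part (i) matches the paper. For part (ii) the paper takes a much shorter route, and your argument has a genuine (though repairable) gap.

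The paper proves a single biconditional: for any graph $G'$ in which every vertex of $L$ is universal, $G'$ is $\mathbb{H}$-subdivision-free iff $G'-L$ is $\Gamma(\mathbb{H})$-subdivision-free; saturation then follows by applying this to $G+uv$. The key device for the nontrivial direction is to choose, among all \emph{minimal} $H$-subdivisions, one that in addition \emph{maximizes} $|V(H_S)\cap L|$. Two easy swaps then force $L\subseteq B$: if some $w\in L$ is absent from $H_S$, swap it for a non-$L$ branch vertex (possible since $|H|>\gamma_\mathbb{H}$), keeping $H_S$ minimal but increasing $|V(H_S)\cap L|$; and if some $w\in L$ is a subdivision vertex, promoting it to a branch role strictly shortens $H_S$. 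Once all of $L$ sits among the branch vertices, $H_S-L$ is \emph{already} a subdivision of an induced subgraph of $H$ on $|H|-\gamma_\mathbb{H}$ vertices, with no vertex-cover bookkeeping required.

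Your route instead works with an arbitrary minimal $H_S$ and must handle $L$-vertices that are internal or absent. The concrete gap is that your construction of $V'$ needs both endpoints $a,b$ of $e^*$ to lie in $W=\phi^{-1}(B\setminus L)$, but you never prove $\phi(a),\phi(b)\notin L$; your structural lemma only controls \emph{internal} $L$-vertices. This missing step is true: if $\phi(a)\in L$, write $P^{e^*}=\phi(a),p_1,\dots,p_k$ with $uv=p_ip_{i+1}$; universality gives $\phi(a)p_{i+1}\in E(G)$, so $\phi(a),p_{i+1},\dots,p_k$ is a shorter $\phi(a)$--$\phi(b)$ path avoiding $uv$, producing an $H$-subdivision in $G$, a contradiction. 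Separately, your phrase ``the $|H|-\gamma_\mathbb{H}-2$ remaining vertices of $\phi^{-1}(B\setminus L)$'' reads as if $|W|=|H|-\gamma_\mathbb{H}$, i.e.\ $L\subseteq B$, which you have not shown; the correct count is $|W\setminus V'|=\gamma_\mathbb{H}-|B\cap L|\ge t$, and it is this inequality (each $L$-internal path uses a distinct vertex of $L\setminus B$) that lets your greedy cover-then-pad go through. With these two fixes your argument can be completed, but the paper's ``maximize $L$-vertices'' trick bypasses all of it.
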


\begin{proof}
In view of Theorem \ref{thm1.3}, we know that $G$ contains a spanning subgraph isomorphic to $B_{\gamma_{\mathbb{H}},n-\gamma_{\mathbb{H}}}$. Let $L\subseteq V(G)$ be the set of the $\gamma_\mathbb{H}$  vertices with degree $n-1$. We claim that $G$ is $\mathbb{H}$-subdivision-free if and only if $G-L$ is $\Gamma(\mathbb{H})$-subdivision-free.
    
    Suppose  that $G$ is $\mathbb{H}$-subdivision-free but  $G-L$ contains an $H_0$-subdivision for some $H_0\in \Gamma(\mathbb{H})$.
    By the definition of $\Gamma(\mathbb{H})$, there exists an $H\in \mathbb{H}$ such that $H_0$ is an $(|H|-\gamma_\mathbb{H})$-vertex induced subgraph of $H$.
    Note that each vertex in 
$L$ is adjacent to all other vertices in $G$. Hence we can find an $H$-subdivision in $G$, a contradiction. 

    Conversely, suppose  that $G-L$ is $\Gamma(\mathbb{H})$-subdivision-free but $G$ contains an $H$-subdivision for some $H\in\mathbb{H}$.
    Let $H_S$ be a minimal $H$-subdivision of $G$ such that it contains as many vertices as possible in $L$. Clearly, $L\subseteq V(H_S)$. % and $T:=\{t_1,t_2,\ldots,t_{\gamma_\mathbb{H}}\}$ denote the set of $\gamma_H$ dominating vertices of $G$.
    As $H_S$ is minimal, every vertex in $L$ can not be a subdivision vertex of $H_S$. % as $H_s$ is minimal. 
    %; otherwise, we can use it to take over the branch vertex that connected to it and get an $H$-subdivision with fewer vertices in $G$, a contradiction. 
    %The new $H$-subdivision has fewer vertices
    %Let $H_0$ be a graph in $\Gamma_{|H|-\gamma_\mathbb{H}}^*(H)$. 
    Thus,  $H_S-L$ contains an $H_0$-subdivision for some $H_0\in \Gamma_{|H|-\gamma_\mathbb{H}}^*(H)$. % In that case, $H_s-T$ contains an $H_0$-subdivision where $H_0\in \Gamma_{|H|-\gamma_\mathbb{H}}^*(H)$.
    Thus, $G-L$ contains an $H_0$-subdivision, a contradiction.  % with $H_0\in \Gamma_{|H|-\gamma_H}^*$, a contradiction. 

%In the following, we end the proof of Corollary~\ref{cor5.1}.

 %It follows from Theorem~\ref{thm1.3} that $G^*$  contains $B_{\gamma_\mathbb{H},n-\gamma_\mathbb{H}}$ as a spanning subgraph. This implies 
 Notice that $G$ is connected. % has $\gamma_\mathbb{H}$ dominating vertices, which implies that $G^*$ is connected.
    Hence, adding an arbitrary  edge between two non-adjacent vertex pairs in $G$ will increase its spectral radius.
    Furthermore, by the claim above, we know that $G^*-L$ is $\Gamma(\mathbb{H})$-subdivision-free.
    Thus, $G^*-L$ is $\Gamma(\mathbb{H})$-subdivision-saturated, as desired. % that is, $G^*-L\in {\rm SAT}(n-\gamma_\mathbb{H},\Gamma(\mathbb{H})_{\rm sub})$.
\end{proof}

\section{Concluding remarks}
In this paper, we characterize the structure of $n$-vertex  $\mathbb{H}$-subdivision-free graphs having the maximum spectral radius. We show that for sufficiently large $n$, every graph $G\in {\rm SPEX}(n,\mathbb{H}_{\rm sub})$ contains a spanning subgraph isomorphic to the generalized book $B_{\gamma_\mathbb{H},n-\gamma_\mathbb{H}}$. Denote by $L$ the set consisting of all  dominating vertices in $B_{\gamma_\mathbb{H},n-\gamma_\mathbb{H}}$. We also prove that $G-L$ is $\Gamma(\mathbb{H})$-subdivision-saturated. This develops a spectral framework for subdivision problems analogous to Zhai, Fang, and Lin's result  \cite{zhai} in the minor-free setting. %extends a recent result by Zhai, Fang, and Lin concerning spectral extremal problems for $\mathbb{H}$-minor-free graphs. 

There are still some other interesting problems to be considered along the above line. For example, a precise structural characterization of $G-L$ remains open. %More specifically, while the extremal graphs avoiding $K_{s,t}$-minors have been completely classified, the corresponding problem for $K_{s,t}$-subdivision-free graphs warrants further investigation. 
In particular, determining the exact structure of graphs in $\mathrm{SPEX}(n,\{K_{s,t}\}_{\mathrm{sub}})$ would provide a valuable counterpart to the minor-free case (see \cite{Tait2019,Tait2017,Zhai-minor}).

%There are still some other interesting problems to be considered along the above line. For example, how to determine the exact structure of $G-L$. In particular, for a complete bipartite $K_{s,t}$, all $n$-vertex $K_{s,t}$-minor free graphs with maximum spectral radius are determined. It is worthy to characterize the exact structure of graphs in ${\rm SPEX}(n,\{K_{s,t}\}_{\rm sub})$.

\bibliographystyle{abbrv}
\bibliography{ref}
\end{document}